\theoremstyle{definition}
    \newtheorem{definition}{Definition} %[subsection]
    \newtheorem*{definition*}{Definition}
    \newtheorem{example}[definition]{Example}
    \numberwithin{definition}{section}
\theoremstyle{plain}
    \newtheorem{lemma}[definition]{Lemma}
    \newtheorem{proposition}[definition]{Proposition}
    \newtheorem{theorem}[definition]{Theorem}
    \newtheorem*{theorem*}{Theorem}
    \newtheorem{corollary}[definition]{Corollary}
    \newtheorem*{claim*}{Claim}
\theoremstyle{remark}
    \newtheorem{remark}[definition]{Remark}
\def%\addnotation \alpha: {some variable that means something to me}{alpha}
\def\addnotation #1: #2#3{\parbox{.3in}{$#1$} \parbox{5in}{#2 \dotfill  \pageref{#3}}\\}
\def\newnot#1{\label{#1}}
\DeclareMathOperator{\tr}{tr}
\DeclareMathOperator{\rk}{rk}
\renewcommand{\bf}{\textbf}
\renewcommand{\phi}{\varphi}
\renewcommand{\rho}{\varrho}
\newcommand{\C}{\mathbb{C}}
\newcommand{\Z}{\mathbb{Z}}
\newcommand{\R}{\mathbb{R}}
\newcommand{\D}{\mathbb{D}}
\newcommand{\Dca}{\mathcal{D}}
\newcommand{\id}{\rm{id}}
\renewcommand{\L}{\mathfrak}
\begin{document}
\bibliographystyle{plain}

\title[Cross ratios]{Cross Ratios, Translation Lengths and Maximal Representations}
%\author{Tobias Hartnick and Tobias Strubel}
\author{Tobias Hartnick}
\address{Technion - Israel Institute of Technology, Mathematics Department\\
         Haifa 32000, Israel}
\email{hartnick@tx.technion.ac.il}
%\urladdr{http://firstauthorwebaddress}
%\thanks{The firstauthor  ... thanks} 
\author{Tobias Strubel}
\address{ETH Z\"urich, Departement Mathematik\\
         R\"amistrasse 101, CH-8092 Z\"urich}
\email{tobias.strubel@math.ethz.ch}
%\urladdr{http://secondauthorwebaddress}
%\thanks{The second author ... thanks}
\date{\today}
\begin{abstract} We define a family of four-point invariants for Shilov boundaries of bounded symmetric domains of tube type, which generalizes the classical four-point cross ratio on the unit circle. This generalization, which is based on a similar construction of Clerc and \O rsted, is functorial and well-behaved under
products; these two properties determine our extension uniquely. Our generalized cross ratios can be used to estimate translation lengths of a large class of isometries of the underlying bounded symmetric domain. Our main application concerns maximal representations of surface groups with Hermitian target. For any such representation we can construct a strict cross ratio on the circle in the sense of Labourie via pullback of our generalized cross ratio along a suitable limit curve. In this context our translation length estimates then imply that maximal representations with Hermitian target are well-displacing; this implies in particular that the action of the mapping class group on the moduli space of maximal representations into a Hermitian Lie group is proper.
\end{abstract}
\maketitle

\setcounter{tocdepth}{1}
\tableofcontents

\section{Introduction} This article is concerned with three interrelated problems:
\begin{itemize}
\item[(i)] the development of a functorial theory of generalized cross ratios on Shilov boundaries of bounded symmetric domains of tube type (following work of Clerc and \O rsted \cite{CO2});
\item[(ii)] estimates for the translation length of isometries of bounded symmetric domains of tube type, which have two transversal fixed points in the Shilov boundary, in terms of these cross ratios;
\item[(iii)] applications to maximal representations of surface groups into Hermitian Lie groups (as suggested by earlier work of Labourie \cite{Lab05} and Wienhard \cite{AnnaMCG}).
\end{itemize}
Concerning (i) we recall that the classical four point cross ratio on $\mathbb{CP}^1$ is defined by the formula
\[[a:b:c:d] := \frac{(a-d)(c-b)}{(c-d)(a-b)};\]
its restriction to the circle classifies orbits of ordered quadruples under the actions of ${\rm PSL}_2(\R)$. For boundaries of more general symmetric spaces the space of invariant functions on $4$-tuples will no longer be one-dimensional, hence it is not obvious how to extend the definition of the cross ratio to more general semisimple Lie groups. In fact, it is not even clear what would be the correct notion of boundary to be used in a general theory of cross ratios. Various inequivalent definitions of generalized cross ratios (in different degrees of generality) exist in the literature, see e.g. \cite{CRSiegel, CRBraun, CRKim, CRBiallas} and \cite[Subsec. 4.2.6]{Lab05}. In this article we will consider the situation, where $\mathcal D$ is a bounded symmetric domain of tube type and $G$ is the identity components of its isometry group with respect to the Bergman metric. (See Section \ref{Prelims} for background and definitions.) In this case, a natural choice of boundary for $\mathcal D$ is the Shilov boundary $\check S$, and we will study invariant function on quadruples in $\check S$. Our basic idea is that a good generalization of the classical cross ratio should be functorial (in a sense to be made precise below) and well-behaved under products. If we demand these two properties then there is actually only one choice:
\begin{theorem}\label{Main}
For every bounded symmetric domain $\mathcal D$ of tube type with
Shilov boundary $\check S$ there exists a subset $\check S^{(4+)}$
of $\check S^4$ (defined in Definition \ref{DefExtremal} below)
and a function $B_{\check S}:\check S^{(4+)} \to \R^\times$ called
the \emph{generalized cross ratio} of $\check S$, such that the
family of functions $\{B_{\check S}\}$ is characterized uniquely
by the following properties:
\begin{itemize}
\item[(i)] $B_{\check S}$ is invariant under the group of biholomorphic automorphisms of $\mathcal D$.
\item[(ii)] If $f: \mathcal D_1 \to \mathcal D_2$ is a balanced tight morphism (see Definition \ref{DefBalanced} below),
then the corresponding generalized cross ratios $B_{\check S_1}$, $B_{\check S_2}$ satisfy
\[B_{\check S_2}(\bar f(v_1), \dots, \bar f(v_4)) = B_{\check S_1}(v_1, \dots, v_4),\]
where $(v_1, \dots, v_4) \in \check S_1^{(4+)}$ and $\bar f$ is the boundary extension of $f$.
\item[(iii)] If $\mathcal D = \mathcal D_1 \times \mathcal D_2$ is a direct product of bounded symmetric domains of ranks $r_1, r_2$ with projections $p_j: \Dca \to \Dca_j$ and corresponding
boundary extensions $\bar p_j: \check S \to \check S_j$ then
\[B_{\check S}(v_1, \dots, v_4)^{r_1+r_2} = B_{\check S_1}(\bar p_1(v_1), \dots, \bar p_1(v_4))^{r_1}B_{\check S_2}(\bar p_2(v_1), \dots, \bar p_2(v_4))^{r_2}.\]
\item[(iv)] $B_{S^1}$ is the restriction of the classical four point cross ratio.
\end{itemize}
\end{theorem}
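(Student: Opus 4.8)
\emph{Setup.} The plan is to work entirely with the Jordan-theoretic model of $\mathcal D$. By the Koecher--Vinberg correspondence, a bounded symmetric domain of tube type of rank $r$ is biholomorphic to a tube $T_\Omega = V \oplus i\Omega$ over the symmetric cone $\Omega$ of a Euclidean Jordan algebra $V$ of rank $r$, and after a Cayley transform $\check S$ becomes the conformal compactification of $V$, containing $V$ as an open dense chart on which $G \deq \Aut(\mathcal D)^\circ$ acts by the Kantor--Koecher--Tits transformations: translations by $V$, the structure group $\mathrm{Str}(V)$ acting linearly, and the Jordan inversion $j(v) = -v^{-1}$. Let $\Delta\colon V \to \R$ denote the Jordan determinant (generic norm), a homogeneous polynomial of degree $r$ with $\Delta(e) = 1$. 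Given $(v_1,\dots,v_4)\in \check S^{(4+)}$, I would move a triple of the points into the chart $V$; by the extremality condition of Definition~\ref{DefExtremal} the four points are pairwise transverse, so the differences $v_i - v_j$ are invertible in $V$, and extremality is exactly what makes the quantity below admit a canonical real $r$-th root. Define
\[
B_{\check S}(v_1,\dots,v_4) \deq \left(\frac{\Delta(v_1 - v_4)\,\Delta(v_3 - v_2)}{\Delta(v_3 - v_4)\,\Delta(v_1 - v_2)}\right)^{1/r},
\]
in evident analogy with $[a:b:c:d]$.

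\emph{Existence.} First I would show $B_{\check S}$ is independent of the chosen chart by checking invariance under the three families of generators of $G$: translations act by subtraction and are harmless; an element $g\in\mathrm{Str}(V)$ acts by $\Delta(gv) = \chi(g)\Delta(v)$ for the associated multiplicative character $\chi$, and $\chi(g)$ occurs twice in the numerator and twice in the denominator; and for $j$ one uses the Jordan identity expressing $\Delta(j(v_i) - j(v_k))$ through $\Delta(v_i)$, $\Delta(v_k)$ and $\Delta(v_i - v_k)$, after which all spurious factors cancel in pairs. This also proves (i). Property (iv) is immediate, since for $V = \R$ one has $r = 1$ and $\Delta = \mathrm{id}$. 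For (iii) one uses that $V = V_1\oplus V_2$ forces $\Delta_V = \Delta_{V_1}\Delta_{V_2}$ and $r = r_1 + r_2$, so distributing each difference over the two factors yields $B_{\check S}^{\,r_1+r_2} = B_{\check S_1}^{\,r_1}B_{\check S_2}^{\,r_2}$. For (ii) I would read off from the definition of a balanced Jordan algebra homomorphism $\phi\colon V_1\to V_2$ that it pulls back the generic norm as $\Delta_{V_2}\circ\phi = \Delta_{V_1}^{\,r_2/r_1}$ and intertwines the affine charts with the boundary extension $\bar\phi$; substituting and extracting roots gives $B_{\check S_2}(\bar\phi v_1,\dots) = B_{\check S_1}(v_1,\dots)$, the exponents matching precisely because of the balancing ratio.

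\emph{Uniqueness.} Let $\{B'_{\check S}\}$ be another family with properties (i)--(iv). Fix a Jordan frame $c_1,\dots,c_r$ of $V$; the linear map $\R^r\to V$, $(t_i)\mapsto\sum t_i c_i$, is a balanced homomorphism inducing a maximal polydisc $\iota\colon \mathbb D^r\hookrightarrow\mathcal D$ with boundary embedding $\bar\iota\colon (S^1)^r\hookrightarrow\check S$. On quadruples in the image of $\bar\iota$, properties (ii), (iii), (iv) force $B'_{\check S}\circ\bar\iota$ to equal the explicit product of classical cross ratios of the coordinates, hence $B'_{\check S} = B_{\check S}$ there. It then suffices to show that every $(v_1,\dots,v_4)\in\check S^{(4+)}$ is $G$-equivalent to such a quadruple, after which (i) concludes. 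For this I would normalize a pairwise-transverse triple to $(0, e, \infty)$, whose $G$-stabilizer acts on $V$ as the Jordan automorphism group $\Aut(V)$; the fourth point becomes a $v\in V$ with $v$ and $v - e$ invertible, and the spectral theorem gives $v = \sum\lambda_i d_i$ for some frame $\{d_i\}$; transitivity of $\Aut(V)$ on frames then furnishes an automorphism carrying $d_i\mapsto c_i$, hence $v\mapsto\sum\lambda_i c_i$ and $e\mapsto e$, placing the quadruple in $\bar\iota((S^1)^r)$.

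\emph{Main obstacle.} The Jordan-identity bookkeeping in the existence part is routine. The real content is the structural claim in the uniqueness step --- that $\check S^{(4+)}$ is exactly the set of quadruples that can be simultaneously conjugated into a maximal polydisc --- which requires pinning down the extremality condition of Definition~\ref{DefExtremal} tightly enough to match it against the spectral normal form, together with checking that the $G$-stabilizer of the normalized triple is no larger than $\Aut(V)$, so that no further freedom is lost. A running secondary subtlety is the branch of the $r$-th root: one must verify that the local determinant formula has a globally consistent sign on $\check S^{(4+)}$ (again a consequence of extremality), so that $B_{\check S}$ is single-valued and $\R^\times$-valued.
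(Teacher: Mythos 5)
Your formula is, up to a change of realization, the same object as the paper's: the paper works in the bounded picture with the normalized automorphy kernel $k_V=\det(K(\cdot,\cdot))^{1/(2n)}$, and since $\det K(z,w)$ factors (after a harmless motion by $G$) as $\det P(e-z)\cdot\det P(c(z)+c(\bar w))\cdot\det P(e-\bar w)$ with the one-variable factors cancelling in the four-point expression, their cross ratio coincides with your $\Delta$-of-differences formula in the tube chart; the overall architecture (explicit formula plus $G$-invariance for existence, reduction to maximal polydiscs for uniqueness) also matches. The first genuine gap is the branch of the $r$-th root, which you dismiss as a "secondary subtlety" but which, handled naively, makes the construction wrong. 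Take $V=\R^2$ and an extremal quadruple with $(a,b,c)$ maximal and each coordinate cross ratio $[a_j:b_j:c_j:d_j]<0$ (a "negative" quadruple in the paper's terminology): the radicand $\prod_j[a_j:b_j:c_j:d_j]$ is then \emph{positive}, yet axiom (ii) applied to the diagonal balanced embedding $\R\to\R^2$ forces the value to equal $[a:b:c:d]<0$, so the correct root is $-\sqrt{|\cdot|}$, not the positive one. Extremality alone does not select the root; the paper pins the sign down by extending $B_V$ continuously to $\mathcal D^4$, normalizing at the basepoint, and observing that $B/|B|$ takes values in the discrete set of $2r$-th roots of unity and is therefore locally constant (Lemma \ref{DullRoot} and the proposition following it). Some argument of this kind is indispensable and is absent from your sketch.

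The second gap is the normalization step in the uniqueness proof. You propose to move "a pairwise-transverse triple" to $(0,e,\infty)$, but $G$ has $r+1$ orbits on pairwise transverse triples, classified by the Maslov index (Clerc--\O rsted), and only the maximal orbit contains $(0,e,\infty)$. One must first use extremality to ensure some sub-triple is maximal (or minimal), then invoke the Clerc--\O rsted orbit classification to produce the normalizing element; this is precisely the paper's Proposition \ref{Quadruples}, which you correctly identify as "the real content" but do not supply. Once that is in place, your remaining steps do work: the stabilizer of $(0,e,\infty)$ is $G(\Omega)\cap{\rm Stab}(e)=\Aut(V)$, the real spectral theorem diagonalizes the fourth point (a small simplification over the paper's complex spectral argument in the bounded picture), and your identity $\Delta_{V_2}\circ\phi=\Delta_{V_1}^{r_2/r_1}$ for balanced $\phi$ is a clean substitute for the paper's appeal to Clerc--\O rsted's kernel functoriality. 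So the route is viable, but both of the points you flagged as "obstacles" are load-bearing and must actually be carried out.
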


(Theorem \ref{Main} will be proved in Section \ref{MainThmProof} below.)\\

The proof of the theorem is constructive. Cross ratios for \emph{irreducible} bounded symmetric domains of tube type have been constructed by Clerc and \O rsted in \cite{CO2}, and it is easy to modify their construction in such a way that it becomes functorial. The main difficulty is then to show that the extension of these generalized cross ratios to arbitrary bounded symmeric domains by means of (iii) is still functorial. In fact, as will be explained in more details in Section \ref{SubsecBalanced} below, this can only be achieved by restricting the class of admissible morphisms to exclude obvious pathologies.\\

One of the reasons for the importance the classical cross ratio in hyperbolic geometry is the fact that is can be used to define the hyperbolic metric. As a consequence, it can also be used to measure translation lengths of hyperbolic isometries. Indeed, recall that given an isometry $g$ of a metric space $X$ the \emph{translation length} $\tau_X(g)$ is defined by the formula
\[\tau_X(g) := \inf_{x \in X} d(x, gx).\]
For an isometry $\gamma$ of the Poincar\'e disc $\mathbb D$ this translation length is non-zero if and only if $\gamma$ is hyperbolic, i.e. admits a unique repellent fixed point $\gamma^-$ and a unique attractive fixed point $\gamma^+$ in $S^1$. In this case we can compute the translation length of $\gamma$ by the formula
\begin{eqnarray}\label{DiscEquality}
\tau_{\mathbb D}(\gamma) = \tau^\infty_{\mathbb D}(\gamma) := \log [\gamma^-:\xi:\gamma^+:\gamma.\xi],\end{eqnarray}
where $\xi \in S^1Ê\setminus \{\gamma^{\pm}\}$ is an arbitrary auxiliary point. The right hand side of this equation is referred to as the \emph{period} of $\gamma$. Using our generalizd cross ratios we can define a period 
\begin{eqnarray}\label{PeriodsIntro}
\tau^\infty_{\mathcal D}(g, g^+, g^-) := \log B_{\check S}(g^-,\xi,g^+,g.\xi), \quad (\xi \in \check S),
\end{eqnarray}
for every triple $(g, g^-, g^+)$, where $g$ is an isometry of a bounded symmetric domain $\mathcal D$ and $g^{\pm}$ is a pair of transverse fixed points of $g$ in $\check S$. Reordering $g^{\pm}$ if necessary we may assume $\tau^\infty_{\mathcal D}(g, g^+, g^-)  \geq 0$. Without any further assumptions we then find a constant $C_{\Dca}$ depending only on $\mathcal D$ such that (see Corollary \ref{CRTransl} below)
\begin{eqnarray}\label{MainEstimate}
\tau_{\Dca}(g) \geq C_{\Dca} \cdot \tau^\infty_{\Dca}(g).
\end{eqnarray}
Remarkably, no hyperbolicity assumptions on $g$ are required for this inequality to hold. On the other hand, to obtain a similar upper bound for $\tau_{\Dca}(g)$ in terms of $\tau^\infty_{\Dca}(g)$ certain dynamical assumptions on $g$ are necessary. See Corollary \ref{CRTransl} for details.\\

Our main application of Inequality \eqref{MainEstimate} concerns representations of the form \[\rho: \Gamma \to G,\] where $G$ is the automorphism group of a bounded symmetric domain $\mathcal D$ of tube type, and $\Gamma$ \newnot{Gamma} is the fundamental group of a closed surface $\Sigma$\newnot{Sigma}. A particular interesting class of such representations is the class of maximal representations (see \cite{Surface, Anosov, LimitCurves} and the references therein), which can be characterized by the property that there exists a unique equivariant continuous limit curve $\phi: S^1 \to \check S$ subject to a certain monotonicity condition. For such a representation we may define a $\Gamma$-invariant function on quadruples on the circle by the formula
\[b_{\rho}(a,b,c,d) := B_{\check S}(\phi(a), \phi(b), \phi(c), \phi(d)).\]
This function turns out to be a strict cross ratio in the sense of
Labourie \cite{Lab05}, which we refer to as the \emph{strict cross ratio of $\rho$}.\\

By choosing a finite generating set $S$  we can think of the group $\Gamma$ as a metric space with word metric $d_S$. With respect to this metric the translation length of $\gamma \in \Gamma$ on $\Gamma$ is given by the formula
\begin{eqnarray}\label{LengthFunction}
l_S(\gamma):=\inf_{\eta\in \gamma}\|\eta \gamma \eta^{-1}\|_S.
\end{eqnarray}
If we combine the estimate for $b_{\rho}$ arising from \eqref{MainEstimate} with Labourie's equivalence theorem for strict cross ratios (see \cite{Lab05} and Proposition \ref{PropLab}) then we obtain the following relation between $l_S$ and translation length in $\mathcal D$: 
\begin{theorem}\label{ThmResults}
Let $\Gamma$ be the fundamental group of a closed oriented surface $\Sigma$, $\mathcal D$ a bounded symmetric domain and $S$ a finite generating set $S$ for $\Gamma$. Then for every
 maximal representation $\rho: \Gamma \to G(\mathcal D)^0$ there exist $A,B>0$ such that for all $\gamma \in \Gamma$, 
\[
    \tau_{\mathcal D}(\rho(\gamma))\geq A \cdot l_S(\gamma)-B.
\]
\end{theorem}
(Theorem \ref{ThmResults} will be proved in Theorem \ref{WD} below.)\\

In the language of \cite{DGLM} Theorem \ref{ThmResults} says that maximal representations are \emph{well-displacing}, where the constants $A$ and $B$ implicit in this statement depend on the maximal representation in question. This well-displacing property has a number of well-known consequences., which we list briefly. Firstly, given any finite generating set $S$ of $\Gamma$ we can define an associated word metric $d_S$ on $\Gamma$. Then, using results from \cite{DGLM} we obtain:
\begin{corollary}\label{CorQI}
For every $x \in \mathcal D$ and every finite generating set $S$ of $\Gamma$ the map
\[(\Gamma, d_S) \to (\mathcal D, d_{\mathcal D}), \quad \gamma \mapsto \varrho(\gamma).x\]
is a quasi-isometric embedding.
\end{corollary}
Theorem \ref{ThmResults}, Corollary \ref{CorQI} and the Milnor-\v{S}varc lemma (Lemma \ref{MilSva}) imply:
\begin{corollary}\label{CorQI2}
  There exists constants $C,D>0$ such that for all $\gamma \in \Gamma$
  \[
    C^{-1}\tau_\D(\gamma)-D\leq \tau_\Dca(\varrho(\gamma))\leq C\tau_\D(\gamma)+D
  \]  
\end{corollary}

Another consequence of Theorem \ref{ThmResults} concerns the mapping class group of $\Sigma$. Fix a bounded symmetric domain $\mathcal D$ of tube type and denote by $G$ the corresponding automorphism group. The set ${\rm Rep}_{\max}(\Gamma, G)$  of maximal representations of $\Gamma$ into $G$ can be considered of as a subset of $G^S$ for any finite generating set $S$ of $\Gamma$; this induces a locally compact topology on ${\rm Rep}_{\max}(\Gamma, G)$. We denote by $\mathcal M_{\max}(\Gamma, G)$ the quotient of ${\rm Rep}_{\max}(\Gamma, G)$ by the conjugation action of $G$, i.e. the moduli space of conjugacy classes of maximal representations of $\Gamma$ into $G$. Combining Corollary \ref{CorQI2} with results from \cite{AnnaMCG} we obtain:
\begin{corollary}\label{CorProperness}
In the above situation the action of the mapping class group of $\Sigma$ on $\mathcal
M_{\max}(\Gamma, G)$ is proper.
\end{corollary}
For classical simple groups Corollary \ref{CorProperness} was proved by Wienhard
\cite{AnnaMCG} (see also \cite{Lab05} for the symplectic case). Since $\mathcal M_{\max}(\Gamma, {\rm PSL}_2(\R))$ is canonically identified with the Teichm\"uller space of $\Sigma$, we can think of the spaces $\mathcal
M_{\max}(\Gamma, G)$ as \emph{higher Teichm\"uller spaces}. The quotients ${\rm Mod}_g \backslash \mathcal
M_{\max}(\Gamma, G)$ should then be considered as higher ana\-loga of the moduli space of hyperbolic structures on $\Sigma$.\\

As a final application we consider the energy functional of a maximal representation $\rho$ as introduced in \cite{Lab05}:  we denote by $E_\rho:=(\tilde \Sigma \times \Dca)/\Gamma$ the associated $\mathcal D$-bundle over $\Sigma$ and by $\Gamma(E_\rho)$ the space of smooth sections of $E_\rho$. In this notation the energy of a complex structure $J$ on $\Sigma$ with respect to $\rho$ is given by (see \cite[Sec. 5.1]{Lab05})
\begin{eqnarray*}e_{\rho}(J) := \inf \{\int_\Sigma \langle df \wedge df \circ J\rangle\, |\, f \in \Gamma(E_\rho)\}.\end{eqnarray*}
Then $e_\rho$ descends to a functional $e_\rho$ on Teichm\"uller space $\mathcal T(\Sigma)$ called the \emph{energy functional} of $\rho$. In this context, our results imply:
\begin{corollary}\label{CorProperness2}
For any maximal representation $\rho: \Gamma \to G(\mathcal D)$  the associated energy functional $e_\rho: \mathcal T(\Sigma)\to \R$ is proper.
\end{corollary}
(Corollaries \ref{CorQI}-\ref{CorProperness2} will be derived in Subsection \ref{SubsecCorollaries} below.)\\

Let us briefly summarize the structure of this article; for a more
detailed overview over its content see also the introductions to
the individual sections:\\ 

In Section \ref{Prelims} we recall the Jordan algebraic realization of bounded symmetric domains of tube type. We use this opportunity to fix the notation to be used throughout this article. We then define the class of morphisms with respect to which we want to obtain functoriality and characterize them both in Jordan and in Lie theoretic terms. Furthermore we describe the structure of the relevant automorphism groups and collect some results concerning the orbits of transverse triples and quadruples in the Shilov boundary. Finally, we show that the Cayley transform induces a linear representation of the Levi factor of a special maximal parabolic subgroup, which preserves the cone of squares in the associated Euclidean Jordan algebra.\\

The actual construction of our generalized cross ratios is given in Section \ref{SecDefCR}. We first define these functions on the interior of a bounded symmetric domain and provide an algebraic description in terms of a suitable Jordan algebra realization. We then use this description to extend our cross ratio functions to the boundary. The following two sections are then devoted to a study of their properties. Section \ref{SecFun} establishes the desired functoriality; the other main properties are collected in Section \ref{SecPropCR}.\\

Section \ref{SecTL} is devoted to the relation between generalized cross ratios and translation lengths. We first provide bounds for translation lengths of elements of the general linear group acting on the associated symmetric space. Using the linear representation of the Levi factor constructed in Section \ref{Prelims} we thereby obtain bounds for the translation length of special isometries of general bounded symmetric domains of tube type. We then show that these bounds can be expressed in terms of the period of the isometry in question.\\

In Section \ref{SecMaxRep} we introduce the notion of a strict cross ratio and associate a strict cross ratio with every maximal representation. Using Labourie's equivalence theorem for strict cross ratios and the estimates from Section \ref{SecTL} we then establish the well-displacing property of maximal representations. Finally, we indicate how to deduce Corollaries \ref{CorQI}, \ref{CorQI2}, \ref{CorProperness} and \ref{CorProperness2}.\\

For the convenience of the reader we have assembled various facts that are used within the body of the text and which are not readily accessible from the literature in three appendices. Appendix \ref{AppJordan} collects some Jordan theoretic facts used in our proof of the functoriality theorem. Appendix \ref{AppLabourie} contains a formulation of Labourie's equivalence theorem for strict cross ratios, which is particularly well-adapted to the purposes of the present article. This result is implicitly contained in \cite{Lab05}, but since this may not be completely obvious, we decided to include a self-contained proof. Finally, Appendix \ref{AppLimitCurve} establishes a certain uniqueness property of limit curves of maximal representations. This result is a more or less direct consequence of work of Burger, Iozzi and Wienhard in \cite{LimitCurves}. In the preparation of this appendix we profitted from a manuscript on Anosov representations by Anna Wienhard and Olivier Guichard.

\textbf{Acknowledgements} We would like to thank Marc Burger and Alessandra Iozzi for their interest
 in our work and for many useful conversations. We also thank Anna Wienhard and Olivier Guichard for commenting on an earlier version of this article and pointing out various errors;  in particular, Proposition \ref{tightness} arose from discussions with them. We also thank Olivier Guichard for explaining to us the idea of the proof of Lemma \ref{contraction} and for sending us the aforementioned manuscript on Anosov representations. Finally, we would like to thank Kloster Mariastein and Hausdorff Institute Bonn for their
 hospitality during the preparation of this article. The authors were supported by SNF grants PP002-102765 and 200021-127016.

\section{Preliminaries on bounded symmetric domains}\label{Prelims}

In this section we collect some background material on bounded symmetric domains and Euclidean Jordan algebras. Our basic reference is \cite{FK}. We also fix our notation used throughout the text. The first subsection is concerned with the notion of a boundary morphism of a bounded symmetric domain; after recalling the necessary definitions, various characterizations of this notion are presented. We then turn to a description of the corresponding automorphism groups. In the final two subsections we describe orbits of transverse points in the Shilov boundary and a certain linear representation of the Levi factor of a distinguished maximal parabolic.

\subsection{Boundary morphisms of bounded symmetric domains}
Let $W$ be finite-dimensional complex vector space. A connected open subset $\Dca\subset W$ \newnot{Dca} is called a \emph{domain}. A bounded domain $\Dca$ is called \emph{symmetric} if for every $z\in \Dca$ there exists a biholomorphic involutive automorphism $s_z$ of $\Dca$ such that $z$ is an isolated fixed point of $s_z$. A convex,  open $\R^+$-invariant subset $\Omega$ of a real vector space $V$ is called an \emph{open cone}, and in this case the subset $T = V + i \Omega$ of $V \otimes \C$ is called the \emph{tube} over $\Omega$. A bounded symmetric domain is called  \emph{of tube type} if it is biholomorphic to a tube.\\

Recall that for any domain $\Dca$, the Bergman space $\mathcal H^2(\Dca)$ is the space of holomorphic square integrable functions on $W$. If $\Dca$ is bounded then this space is infinite-dimensional and thus the Bergman kernel $k_{\mathcal D}: \mathcal D^2 \to \C^\times$\newnot{kDca} can be defined as its reproducing kernel (see e.g. \cite[Chap. IX.2]{FK}), i.e. by the formula
\[
  f(z)=\int_\Dca f(w)k_\Dca (w,z) dw \quad (f \in \mathcal H^2(\Dca), z \in \mathcal D).
\]
The tensor
\[
  g_{jk}(z):=\frac{\partial^2}{\partial z_j \partial \bar z_k}\log k_\Dca(z,z)
\]
then defines a Hermitian metric on $\Dca$, called the \emph{Bergman metric}, which is invariant under biholomorphisms (see \cite[Prop IX.2.6]{FK}).\\

Given two bounded symmetric domains $\Dca$ and $\Dca'$ with involutions $s_z$ and $s'_{z'}$ respectively, a holomorphic map $f:\Dca \mapsto \Dca'$ is a \emph{morphism} if for any $z\in \Dca$ we have 
\[
  f\circ s_z=s'_{f(z)}\circ f.
\]
Equivalently, $f$ is an \emph{affine} holomorphic map with respect to the Bergman metric on $\mathcal \Dca$. Given a bounded symmetric domain $\mathcal D$, we denote by $G(\mathcal D)$ the group of all automorphisms of $\mathcal D$. Its identity component $G(\mathcal D)^0$ is a finite-dimensional connected adjoint semisimple Lie group acting transitively on $\mathcal D$, and the stabilizer of each point is a maximal compact subgroup. It turns out that all morphisms of bounded symmetric domains are equivariant in the following sense:
\begin{lemma}\label{GroupLift} 
Let $\mathcal D_1, \mathcal D_2$ be bounded symmetric domains and $\beta: \mathcal D_1\to  \mathcal D_2$ a morphism of bounded symmetric domains. Then there exists a finite coverings $\widehat{G}(\mathcal D_1)$ of ${G}(\mathcal D_1)^0$ and a group homomorphism $\widehat{\alpha}:\widehat{G}(\mathcal D_1) \to G(\mathcal D_2)^0$, such that $\beta$ is equivariant with respect to $\widehat{\alpha}$.
\end{lemma}
\begin{proof} Denote by $\widetilde{G}(\mathcal D_1)$ the universal covering of ${G}(\mathcal D_1)^0$. By \cite[Thm. V.1.9]{Bertram} there exists a group homomorphism $\widetilde{\alpha}:\widetilde{G}(\mathcal D_1) \to G(\mathcal D_2)$ with respect to which $\beta$ is equivariant, and it remains to show that $\widetilde{\alpha}$ factors through a quotient of $\widetilde{G}(\mathcal D_1)$ with finite center. For this denote by $\alpha: \L g_1 \to \L g_2$ the induced morphism of Lie algebras; then the complexification $\alpha^\C$ of $\alpha$ lifts to a homomorphism $\alpha^\C: G_1^\C \to G_2^\C$ of the corresponding complex simply-connected groups. Now let $\widehat{G}(\mathcal D_1)$ be the analytic subgroup of $G_1^\C$ with Lie algebra $\L g_1$; then $\widehat{G}(\mathcal D_1)$ is linear (since $G_1^\C$ is), hence has finite center; evidently $\widetilde{\alpha}$ factors through $\widehat{G}(\mathcal D_1)$.
\end{proof}
Given a bounded symmetric domain $\mathcal D \subset W$ we denote by $\check S(\mathcal D) \subset \bar \Dca$ the associated Shilov boundary \newnot{shilov} (see \cite{Clerc} and compare also \eqref{Shilov} below).
\begin{definition}\label{DefShilovExt}
A morphism $\beta:  \mathcal D_1 \to \mathcal D_2$ of bounded symmetric domains with respective Shilov boundaries $\check S_j := \check S(\mathcal D_j)$ is called a \emph{boundary morphism} if it admits a continuous extension $\bar \beta: \check S_1 \to \overline{\mathcal D_2}$ satisfying $\bar \beta(\check S_1) \subset \check S_2$. 
\end{definition}

Note that such an extension, if it exists, is necessarily unique.

Every bounded symmetric domain $\mathcal D$ is isomorphic to the unit ball of a positive Hermitian Jordan triple system $W$ with respect to the spectral norm \cite{Clerc}. If $\mathcal D$ is of tube type, then $W$ can be chosen to be the complexification of a Euclidean Jordan algebra $V$\newnot{V} (see \cite{FK, Loos} and Appendix \ref{AppJordan} for background on Jordan algebras and related notions).  Given a Euclidean Jordan algebra $V$ we denote by $V^\times$ the open subset of invertible elements in $V$ and by $\Omega_V$ (or $\Omega$) the open cone defined by\newnot{Omega}
\begin{eqnarray}\label{Cone}\Omega_V := \{x \in V^\times\,|\, \exists y \in V: x = y^2\}.\end{eqnarray}The corresponding tube $V + i\Omega_V$ in $V^\C := V \otimes \C$ will be denoted $T_{\Omega_V}$ (or simply $T_\Omega$)\newnot{TOmega}. This is biholomorphic to the unit ball $\mathcal D_V$ of $V^\C$ (with respect to the spectral norm). An explicit biholomorphism is provided by the restriction of the Cayley transform 
\[c: D(c) \to D(p), \quad c(w) = i(e+w)(e-w)^{-1},\]
where 
\[D(c) := \{w \in V^\C\,|\,e-w\,{\rm invertible}\}, \quad D(p):= \{z \in V^\C\,|\,z + ie\,{\rm invertible}\},\]
see \cite[Theorem X.1.1]{FK}. The inverse for $c$ is given by
\[p: D(p)\to D(c), \quad p(z) = (z-ie)(z+ie)^{-1}.\]
According to \cite[Thm. X.4.6]{FK} the Shilov bounday $\check S_V$ of $\mathcal D_V$ admits the explicit description
\begin{eqnarray}\label{Shilov}\check S_V = \{z \in V^\C\,|\, z \,{\rm invertible}, z^{-1} = \bar z\}\end{eqnarray}
in terms of $V$. As a consequence, every morphism of Euclidean Jordan algebras induces a boundary morphism of the corresponding unit balls. (Here and in the sequel morphisms between Jordan algebras are assumed unital.) In fact, every boundary morphism of tube type domains arises in this way:
\begin{proposition}\label{tightness}
Let $\mathcal D_1, \mathcal D_2$ be bounded symmetric domains of tube type with respective Shilov boundaries $\check S_1$ and $\check S_2$, and $\beta: \mathcal D_1\to  \mathcal D_2$ be a morphism (i.e. affine holomorphic). Then the following are equivalent:
\begin{itemize}
\item[(i)] $\beta$ is a boundary morphism, i.e. admits a boundary extension satisfying $\bar \beta(\check S_1) \subset \check S_2$.
\item[(ii)] There exist Euclidean Jordan algebras $V_1$, $V_2$, a Jordan algebra homomorphism $\alpha: V_1 \to V_2$ and isomorphisms $\mathcal D_j \cong \mathcal D_{V_j}$ intertwining $\beta$ and $\alpha^\C$.
\item[(iii)] $\beta$ is tight.
\item[(iv)] $\beta$ lifts to a tight homomorphism $\widehat{\beta}:\widehat{G}(\mathcal D_1) \to \widehat{G}(\mathcal D_2)$, where $\widehat{G}(\mathcal D_j)$ is some finite covering of ${G}(\mathcal D_j)^0$.
\end{itemize} 
\end{proposition}
The concept of a tight map between symmetric spaces and their automorphism groups is taken from \cite{Tight}, where the implications 
\[\text{(iii)} \Rightarrow \text{(iv)} \Rightarrow \text{(i)}\]
are proved (see \cite[Cor. 2.16 and Thm. 4.1]{Tight}). As far as the implication $\text{(ii)} \Rightarrow \text{(iii)}$ is concerned, we learned the following argument from O. Guichard: We may assume $\mathcal D_j = \mathcal D_{V_j}$ and $\beta = \alpha^\C$ for some morphism $\alpha: V_1 \to V_2$ of Euclidean Jordan algebras. We then have embeddings of the Poincar\'{e} disc into $\mathcal D_j$ given by
\[\iota_j: \mathbb D \to \mathcal D_j, \quad \lambda \mapsto \lambda \cdot e_j,\]
where $e_j$ is the unit element of $V_j$; these satisfy $\beta \circ \iota_1 = \iota_2$. Now the embeddings $\iota_1$ and $\iota_2$ are tight and positive; however, as proved in \cite[Lemma 8.1]{Tight}, a morphism intertwining positive tight discs is itself tight. This implies (iii). Thus the only missing implication is $\text{(i)} \Rightarrow \text{(ii)}$; for this we provide a Jordan algebraic proof in the appendix (see Proposition \ref{tightnessAppendix}). 

\subsection{The automorphism group} Given a Euclidean Jordan algebra $V$ with unit element $e=e_V$ and associated bounded symmetric domain $\mathcal D_V$ we denote by $G_V$ the identity component of the automorphism group of $\mathcal D_V$ and set $K_V := {\rm Stab}_0(G_V)$ and $Q_{\pm,V} := {\rm Stab}_{\pm e}(G_V)$. We use the small gothic letters $\L g_V, \L k_V, \L q_{+, V}$ to denote  the respective Lie algebras. The group $K_V$ is a maximal compact subgroup of $G_V$ and thus induces a Cartan decomposition $\L g_V= \L k_V \oplus \L p_V$, where $ \L p_V$ is the Killing orthogonal complement of $\L k_V$ in $\L g_V$. In particular, $T_0 \mathcal D_V \cong \L p_V$. The subgroups $Q_{\pm,V}$ are conjugate maximal parabolic subgroups of $G_V$. We refer to the parabolics in their conjugacy class as \emph{Shilov parabolics}. Note that  $Q_{+,V}$ and $Q_{-, V}$ share the same Levi factor $L(Q_{\pm, V}) = Q_{+, V} \cap Q_{-, V}$, which is the pointwise stabilizer of $\{\pm e_V\}$. We will use the notations $G_V, K_V,  Q_{\pm, V}$ throughout this article. Whenever the Jordan algebra $V$ is clear from the context we will simply write $G, K, Q_{\pm}$.  \newnot{G} \newnot{K} \newnot{Q}\\

\subsection{Orbits of transverse points}
Since $Q_+$ is a maximal parabolic in $G$, there is a generalized Bruhat decomposition of $G$ with respect to $Q_+$ (see e.g. \cite[Thm. 7.40]{Knapp}). This allows us to define a notion of transversality on the generalized flag manifold $\check S = G/Q_+$ Namely, two points $z:=gQ_+, w:=hQ_+ \in \check S$ are \emph{transverse}, denoted $z \pitchfork w$, if $Q_+g^{-1}hQ_+$ coincides with
the unique (open) cell of maximal dimension in the Bruhat decomposition of $\check S$ with respect to $Q_+$. For various characterizations of transversality on the Shilov boundary see Proposition \ref{TransMain} in the appendix. We write
\[
  \check S^{(n)} := \{(z_1, \dots, z_n) \in \check S^n\,|\, \forall {i \neq j}: \; z_i \pitchfork z_j\}
\]
for the set of pairwise transverse $n$-tuples in $\check S$. Since the $G$-action preserves transversality, each $\check S^{(n)}$ is a union of $G$-orbits. For $n=2$ we see from
the definition that
$\check S^{(2)}$ is the unique $G$-orbit in $\check S^2$ of maximal dimension. This characterization can be
used to identify $\check S^{(2)}$ in concrete examples. 

\begin{example}
In the case of $G = {\rm Sp}(2n, \R)$ the Shilov boundary is identified with the set $\mathcal L(\R^{2n})$ of Lagrangian
subspaces of $\R^{2n}$. Classically, two Lagrangian subspaces $V,W$ of $\R^{2n}$ are called transverse
if $V \oplus W  = \R^{2n}$. Clearly,
\[\mathcal L(\R^{2n})^{(2)} = \{(V,W) \in \mathcal L(\R^{2n})^2\,|\, V \oplus W  = \R^{2n}\}\]
is an open ${\rm Sp}(2n, \R)$-orbit, hence $\check S^{(2)} = \mathcal L(\R^{2n})^{(2)}$. 
\end{example}

Returning to the general case we recall that $G$-orbits in $\check S^{(3)}$ are classified by the generalized Maslov index $\mu_{\check S}$ of Clerc and \O rsted, see \cite{CO2}. (For a complete classification of orbits in $\check S^3$ see \cite{ClercNeeb}.) Concerning $G$-orbits in $\check S^{(4)}$ we will confine ourselves with the following result. 

Let $\Dca$ be a bounded symmetric domain. Then an affine embedding of $\D^n$ into $\Dca$ is called a \emph{polydisc}; it is called and a \emph{maximal polydisc}, if $n=\rk \Dca$. We will usually not distinguish between the polydisc embedding and its image. Given a Jordan algebra $V$ we refer to a maximal collection $\{c_i\}$ of idempotents of $V$ satisfying $c_ic_j=0 $ for all $ i \neq j$ as a \emph{Jordan frame}.

\begin{proposition}\label{Quadruples}
Let $(z_1, \dots, z_4) \in \check S^{(4)}$, and suppose $\mu_{\check S}(z_i, z_j, z_k)$ is maximal for some $\{i,j,k\} \subset \{1, \dots,
4\}$. Then $z_1, \dots, z_4$ are contained in the boundary of a common maximal polydisc. More precisely, if $\mu_{\check S}(z_1, z_2, z_3)$ is maximal, then there exists $g\in G$ and a Jordan frame $(c_1, \dots, c_r)$ such that 
\[g.(z_1, \dots, z_4) = (\sum (-1) \cdot c_j, \sum(- i) \cdot c_j, \sum 1 \cdot c_j, \sum \lambda_j c_j).\]
\end{proposition}
\begin{proof} Let $r := {\rm rk}(V)$.
We may assume w.l.o.g. that $\mu_{\check S}(z_1, z_2, z_3)$ is maximal, i.e. \[\mu_{\check S}(z_1,z_2,z_3) = r = \mu_{\check S}(-e,
-ie,e).\] Since the Maslov index classifies orbits of transverse triples we then find $g \in G$ with \[g.(z_1,z_2,z_3) = (-e, -ie,e).\] Let $z = g.z_4$. By Proposition \ref{shilovpoints} there
exists a Jordan frame $(c_1,\ldots,c_r)$ and $\lambda_i\in \C$ with $|\lambda_i|=1$ such that
\[
 z=\sum_{i=1}^r\lambda_i c_i.
\]
 We then get the desired equality
\[g.(z_1, \dots, z_4) = (\sum (-1) \cdot c_j, \sum(- i) \cdot c_j, \sum 1 \cdot c_j, \sum \lambda_j c_j)\]
and we deduce that the quadruple is contained in the Shilov boundary of the polydisc
    \[
      \varphi_c: \D^r\rightarrow \Dca, \quad
        (\lambda_1,\ldots,\lambda_r)\mapsto
        \sum_{i=1}^r\lambda_ic_i
    \]
associated with the Jordan frame $c=(c_1, \dots, c_r)$. Consequently, $(z_1,
\dots, z_4)$ is contained in the Shilov boundary of the maximal polydisc $g^{-1} \circ \phi_c$.
\end{proof}
 
 \subsection{The Cayley transform and representations of Levi factors}
To obtain a better understanding of the fine structure of $G$ we observe that the Cayley transform $c: \mathcal D_V \to T_\Omega$ induces an isomorphism
\begin{eqnarray}\label{CayleyInduced}
\hat{c}: G \to G(T_\Omega)^0, \quad g \mapsto c \circ g \circ c^{-1}.
\end{eqnarray}
Denote by $\L g(T_\Omega)$ and $\L g(\Omega)$ the Lie algebras of $G(T_\Omega)$ and \[G(\Omega) := \{g \in GL(V)\,|\, g.\Omega = \Omega\}.\] We will consider $G(\Omega)$ as a subgroup of $G(T_\Omega)$ acting diagonally on $V+i\Omega$ (cf. \cite[p.205]{FK}). Then $\L g(T_\Omega)$ admits a $\Z$-grading with  $\L g(T_\Omega)_0 =  \L g(\Omega)$, $\L g(T_\Omega)_{\pm 1} \cong V$ and $\L g(T_\Omega)_n = \{0\}$ for $|n| > 1$ (see e.g  \cite[Sec. 6]{LawsonLim}). We will denote by $N^{\pm}$ the analytic subgroups of $G(T_\Omega)^0$ corresponding to $\L g(T_\Omega)_{\pm 1}$.  Then 
$G(\Omega)$ normalizes $N^\pm$ and we can thus form the semidirect products $P^+ := N^- G(\Omega)$ and $P^- :=N^+G(\Omega)$. (The reason for these sign conventions will become clear in Proposition \ref{Levi}.) It turns out that $P^{\pm}$ are maximal parabolic subgroups of $G(T_\Omega)^0$ and that $P^-$ stabilizes $0 \in V$. Its unipotent radical is given by $N^+$ and its Levi factor is given by $G(\Omega)$ (see \cite[Sec. 7]{LawsonLim}). Now we have:
\begin{proposition}\label{Levi}
Let $\widehat{c}: G \to G(T_\Omega)^0$ be the isomorphism given by \eqref{CayleyInduced}. Then $\widehat{c}(Q_-) = P^-$ and $\widehat{c}(L(Q_-)) = G(\Omega)$.
\end{proposition}
\begin{proof} Since $P^-$ stabilizes $0 \in V$ the group $\widehat{c}^{-1}(P^-)$ stabilizes $c^{-1}(0) = -e$. %We thus have $\widehat{c}^{-1}(P^-) \subset Q_-$.% Now we compare dimensions: For this we first observe that by \cite[Thm. X.5.3]{FK} we have 
%\[K = G(\check S) := \{g \in GL(V^\C)\,|\,g\check S = \check S\}.\]
%Then \cite[Prop. X.3.1]{FK} yields $K_{e_V} = G(\check S)_{e_V} = {\rm Aut}(V)$. We thus obtain
%\begin{eqnarray*}
 % \dim \check S = \dim K - \dim {\rm Aut}(V).
%\end{eqnarray*}
%On the other hand, \cite[Thm. III.5.1]{FK} yields $G(\Omega)_{e_V} =  {\rm Aut}(V)^0$ and thus
%\begin{eqnarray*}
 % \dim \Omega = \dim G(\Omega) -  \dim {\rm Aut}(V)^0 = \dim G(\Omega) -  \dim {\rm Aut}(V).
%\end{eqnarray*}
%Now since $V$ is homeomorphic to an
%open subset of $\check S$ we also have
%\begin{eqnarray*}
  %\dim \check S = \dim V = \dim \Omega = \dim G(\Omega) - \dim {\rm Aut}(V),
%\end{eqnarray*}
%whence
%\begin{eqnarray}\label{LeviDim1}\dim G(\Omega) = \dim K.\end{eqnarray}
%On the other hand, since $G/Q_{\pm} = \check S$ and $G/K =
%\mathcal D$ we have
%\[\dim G - \dim K = \dim \mathcal D = 2 \dim V = 2 \dim \check S= 2(\dim G - \dim Q_{\pm}),\]
%i.e.
%\[
 %\dim G + \dim K = 2\dim Q_{\pm}. 
%\]
%Now,
%\begin{eqnarray*}
%\dim P^- &=& \dim G(\Omega) + \dim V =  \dim K + \dim \check S\\
%&=& \dim K + \dim G - \dim Q_-\\
%&=& 2 \dim Q_- - \dim Q_- \\
%&=& \dim Q_-.
%\end{eqnarray*}
%We thus have
%\[Q_-^0 \subset \widehat{c}^{-1}(P^-) \subset Q_-.\]
Thus $\widehat{c}^{-1}(P^-) \subset Q_-$ is a subgroup, but being maximal parabolic itself we find $\widehat{c}^{-1}(P^-) = Q_-$. Passing to the corresponding Levi factors yields the second statement.
\end{proof}
For later reference we record the following consequences:
\begin{corollary}\label{LeviRep}
\begin{itemize}
\item[(i)] The unipotent radical of a Shilov parabolic is abelian.
\item[(ii)] The map $\widehat{c}$ provides a linear representation $\widehat{c}: L(Q_{\pm}) \to GL(V)$ for the Levi factor of the standard Shilov parabolics.
\end{itemize}
\end{corollary}
We will exploit the linear representation of $L(Q_{\pm})$ in Section \ref{SubsecTLLinear} below to estimate translation lengths.

\section{Construction of generalized cross ratios}\label{SecDefCR}

In this section we define the protagonists of this article, namely generalized cross ratios on bounded symmetric domains of tube type. Our definition proceeds in three steps: In the first subsection we define a generalized cross ratio for four-tuples of points inside a bounded symmetric domain. We then provide in the second subsection an algebraic description of these generalized cross ratios. In the final subsection we use this description to prove that our generalized cross ratio extends to certain four-tuples on the Shilov boundary.\\

A remark concerning our normalizations seems in place here: The cross ratios defined here are special cases of more general parameter-dependent cross ratios; imposing functoriality automatically fixes these parameters. To keep this exposition simple we refrained from carrying the parameters along; instead we decided to fix the correct parameters a priori. We hope that Proposition \ref{PropFunctoriality} will convince the reader that a posteriori our normalization is the correct one. To give the reader an idea of the normalizations involved, consider the case of irreducible bounded symmetric domains. If $\phi:\Dca_1 \rightarrow \Dca_2$ is a morphism of such domains and $k_1$, $k_2$ are the associated kernels as defined by Clerc and \O rsted in \cite{CO2}, then \cite[Prop. 6.2]{CO2}
\[
  k_2(\phi(x),\phi(y))^{r_1}=k_1(x,y)^{r_2},
\]
where $r_i$ are the respective ranks. Thus to obtain a functorial kernel function on $\mathcal D$ one should consider the $({\rm rk}\, \mathcal D)$th roots of the kernel functions of Clerc and \O rsted (which up to a constant coincides with the $(2 \cdot \dim_\C \mathcal D)$th root of the inverse of the Bergman kernel). These kind of obvious normalizations lead to the definitions presented below.

\subsection{Definition and basic invariance properties}

Let $\mathcal D$ be a domain in a complex vector space $W$, which is biholomorphic to a bounded domain, with Bergman kernel $k_\Dca$ (cf. p. \pageref{kDca}).  Since $\mathcal D^2$ is simply-connected, the rational powers $k_{\mathcal D}^\alpha$ can be defined for any $\alpha \in \mathbb Q$ in $\Dca$; indeed, given $\alpha = \frac p q$ with integers $p, q \in \Z$ we define
$k_{\mathcal D}^{\alpha}$ to be the unique continuous function on $\mathcal D^2$ satisfying
\begin{eqnarray}
(k_{\mathcal D}^{\alpha})^q = k_{\mathcal D}^p, \quad k_{\mathcal D}^{\alpha}(0,0) = 1.
\end{eqnarray}
We then define:
\begin{definition} Let $\mathcal D$ be a domain in a complex vector space $W$, which is biholomorphic to a bounded domain. Then the \emph{weighted Bergman cross ratio}\newnot{BDcaalpha} in $\Dca$ of \emph{weight} $\alpha \in \mathbb Q$ is the function
\begin{eqnarray}
B_{\mathcal D}^{(\alpha)}: \mathcal D^4 \to \C^\times, \quad (x,y,z,t) \mapsto \frac{k_{\mathcal D}^{\alpha}(t,x)k_{\mathcal D}^{\alpha}(y,z)}{k_{\mathcal D}^{\alpha}(t,z)k_{\mathcal D}^{\alpha}(y,x)}.
\end{eqnarray}
\end{definition}
Our first observation is the following crucial invariance property:
\begin{proposition}\label{BergmanCrossInvarianceInterior} Let $\mathcal C$ be complex domains biholomorphic to a bounded domain and let  $c: \mathcal D \to \mathcal C$ be a biholomorphism. Then for all $(x,y,z,t) \in \mathcal D^{4}$ and for every $\alpha \in \mathbb Q$ we have
\[B_{\mathcal D}^{(\alpha)}(x,y,z,t) = B^{(\alpha)}_{\mathcal C}(c(x), c(y), c(z), c(t)).\]
\end{proposition}
\begin{proof} Since the equality is invariant under taking rational powers, it suffices to prove the proposition for $\alpha = 1$. According to \cite[Prop. IX.2.4]{FK} the Bergman kernels
on $\mathcal D$ and $\mathcal C$ are related by the formula,
\[k_{\mathcal D}(z,w) = k_{\mathcal C}(c(z), c(w))\det{}_\C(J_c(z))\overline{\det{}_\C(J_c(w))},\]
where $J_c$ denotes the complex Jacobian of $c$. Thus,
\begin{eqnarray*}
B_{\mathcal D}^{(1)}(x,y,z,t) &=& B_{\mathcal C}^{(1)}(c(x), c(y), c(z), c(t)),
\end{eqnarray*}
since the Jacobian terms cancel.
\end{proof}
In particular we have:
\begin{corollary}\label{CorInvariance} If $\mathcal D$ is a complex bounded domain, then $B_{\mathcal D}^{(\alpha)}$ is invariant under the group $G(\mathcal D)$ of biholomorphic automorphisms of $\mathcal D$ for every $\alpha \in \mathbb Q$.
\end{corollary}
For a general bounded domain we do not see any preferable normalization for $\alpha$; however, for bounded symmetric domains, there is essentially (i.e. up to global constant) only one normalization, which yields the desired functoriality. This normalization is given as follows:
\begin{definition}\label{DefBD}
Let  $\mathcal D$ be a bounded symmetric domain of complex dimension $n$. Then the \emph{generalized cross ratio} of $\mathcal D$ is the function \newnot{BDca}
\[B_{\mathcal D}: \mathcal D^4 \to \C^\times\]
defined as follows: If $\Dca$ is irreducible, then  $B_{\mathcal D} := B_{\mathcal D}^{(-\frac{1}{2n})}$. If $\mathcal D = \mathcal D_1 \times \dots \times \mathcal D_m$ with irreducible factors $D_1, \dots, D_m$ then we define $B_{\mathcal D}$ by the formula 
\[B_{\mathcal D}^{\rk \Dca} = \prod_{i=1}^m B_{\mathcal D_i}^{\rk \mathcal D_i}, \quad B_{\mathcal D}(0,0,0,0) = 1.\]
\end{definition}
\begin{remark}
The appearance of the dimension factor $n$ in the normalization is essential for the functoriality of the cross ratio, see the proof of Lemma \ref{UniversalitySimpleCase}. On the other hand, the additional factor $2$ in the denominator is purely for reasons of normalization, see Example \ref{CRClassic}. The reasons for weighting the simple factors in the present way are more subtle; see the proof of Proposition \ref{BalancingTheorem}.
\end{remark}
While $B_{\mathcal D}$ is not a weighted Bergman cross ratio in the strict sense, it still inherits the following property:
\begin{corollary}\label{kVProjection}
Let $\mathcal D$ be a bounded symmetric domain. Then $B_{\mathcal D}$ is invariant under $G(\mathcal D)$. Moreover, if $\mathcal D = \mathcal D_1 \times \mathcal D_2$ is the product of two bounded symmetric domains of respective ranks $r_1, r_2$ with projections $p_j: \mathcal D \to \mathcal D_j$ then
\[B_{\mathcal D}(x,y,z,t)^{r_1+r_2} = B_{\mathcal D_1}(p_1(x),p_1(y),p_1(z),p_2(t))^{r_1}B_{\mathcal D_2}(p_2(x),p_2(y),p_2(z),p_2(t))^{r_2}.\]
\end{corollary}
We remark that the definition of the generalized cross ratio makes sense for any bounded symmetric domain, regardless whether it is of tube type or not. However, in the non-tube type case the framework of Euclidean Jordan algebras is not available and so we would have to use more general Jordan triple systems in order to obtain an algebraic description. Since our applications are only concerned with the tube type case, we decided to avoid this.

\subsection{Algebraic description} Let $\mathcal D$ be a bounded symmetric domain of tube type. By Proposition \ref{BergmanCrossInvarianceInterior} the generalized cross ratio of $\mathcal D$ does not depend on the concrete realization of $\mathcal D$, hence we choose to realize $\mathcal D$ as the bounded symmetric domain  $\mathcal D_V$\newnot{DcaV} of a Euclidean Jordan algebra, i.e. we fix an isomorphism $\mathcal D \cong \mathcal D_V$. We now aim to describe the weighted Bergman cross ratio of $\mathcal D$ algebraically in terms of $V$. For this we introduce the following notions:\\

Let $V$ be a Euclidean Jordan  algebra. Given $z \in V^\C$ we
denote by $L(z)$ the left-multiplication by $z$. Then for all $z,w
\in V^\C$ the \emph{box operator} and the \emph{quadratic
representation} are defined by
\[z \square w := L(zw) + [L(z), L(w)],\]
and
\[P(z) := 2L(z)^2 - L(z^2)\]
respectively. Following \cite{CO2} (see also \cite{FK} and \cite{Satake}) we
define the \emph{automorphy kernel}
\[K: V^\C \times V^\C \to {\rm End}(V^\C),\]
by
\[K(z,w) := I - 2z\square \overline{w} + P(z) P(\overline{w}).\]
We also use the quadratic representation to define the \emph{structure group} of $V^\C$ to be
\[{\rm Str}(V^\C) := \{g \in GL(V^\C)\,|\,P(gx) = gP(x)g^{\top}\},\]
where $g^\top$ is the transpose of $g$ with respect to the Euclidean structure on $V^\C$.
If $z,w \in \mathcal D$ then $K(z,w) \in {\rm Str}(V^\C)$ \cite[p.
315]{CO2}. Thus for every character $\chi: {\rm Str}(V^\C) \to \C^\times$ we obtain a kernel function
\begin{eqnarray}
k_{\chi}: \mathcal D^2 \to  \C^\times, \quad (a,b) \mapsto \chi(K(a,b)).
\end{eqnarray}\newnot{kchi}
By \cite[Prop. X.4.5]{FK} there exists a constant $C = C(V)$ such that
\[k_{\mathcal D_V} = C \cdot k_{\det^{-1}};\]
in particular we get the following \emph{algebraic} description of the weighted Bergman cross ratio in the tube type case:
\begin{eqnarray}\label{BergmanPrelim}B_{\mathcal D_V}^{(\alpha)} = \frac{k_{\det^{-\alpha}}(d,a)k_{\det^{-\alpha}}(b,c)}{k_{\det^{-\alpha}}(d,c)k_{\det^{-\alpha}}(b,a)}.\end{eqnarray}
It is then clear how to define a normalized kernel function $k_V: \mathcal D^2 \to  \C^\times$ such that the generalized cross ratio of $\mathcal D_V$ takes the form
\begin{eqnarray}\label{CRIntFormula}
B_{\mathcal D_V}(a,b,c,d) = \frac{k_V(d,a)k_V(b,c)}{k_V(d,c)k_V(b,a)}.
\end{eqnarray}
Indeed, we define:
\begin{definition}\label{DefNormKernel}
The \emph{normalized kernel function} $k_V: \mathcal D^2 \to  \C^\times$ \newnot{kV}   is defined as follows: If $V$ is simple, then we define $k_V$ to be the unique function satisfying 
\[k_V(z,w)^{2 \dim V} = k_{\det^{-1}}(z,w), \quad k_V(0,0) = 1.\]
For general $V$, we decompose $V = V_1 \oplus \dots
\oplus V_n$ into simple ideals and identify elements $z \in V$ with vectors $z = (z_1, \dots, z_n)^\top$ with $z_j \in V_j$ and define by
\begin{eqnarray*}\label{ProductFormula}k_V(z,w)^{\rk V} = \prod_{i=1}^n (k_{V_i}(z_i, w_i))^{{\rk V_i}}, \quad k_V(0,0) = 1.\end{eqnarray*}
With this definition of $k_V$ the equality \eqref{CRIntFormula} is a direct consequence of \eqref{BergmanPrelim}.
\end{definition}
\begin{example}\label{CRClassicInterior}
Let $V = (\R, \cdot)$ so that $V^\C=\R^\C=\C$ and $\mathcal D_V = \mathbb D$ is the Poincar\'{e} disc. Then for $x,w,z\in \C$ we have
\begin{eqnarray*}
 (z\square w)x&=&L(zw)x+[L(z),L(w)]x=(zw)x,\\
 P(z)x&=&(2L(z)^2-L(z^2))x=z^2x,\\
 K(z,w)x&=&x-2z\bar w x+z^2\bar w^2=(1-z\bar w)^2x,\\
\end{eqnarray*}
in particular $k_\R(z,w) = 1-z\bar w$ and thus
\[B_{\mathbb D}(a,b,c,d) = \frac{(1-d\bar a)(1-b\bar c)}{(1-d\bar c)(1-b\bar a)}.\]
\end{example}
\subsection{Transversality and boundary extensions}
So far we have considered cross ratios in the interior of a bounded symmetric domain $\mathcal D$; our definition relied on the fact that the normalized kernel function does not vanish for any pair in $\mathcal D^2$. Now we want to extend our cross ratio continuously to pairs in  the Shilov boundary $\check S$ of $\mathcal D$; it is indeed possible to extend the normalized kernel function to the topological closure of $\mathcal D$, but the resulting function will have zeros. As far as points $z,w$ in the Shilov boundary are concernced we deduce from Proposition \ref{TransMain} that $\det K(z,w) = 0$ if and only if $z$ and $w$ are not transverse. This observation allows us to extend $k_V$ to a continuous nowhere-vanishing function on $\check S^{(2)}$. To extend it even further, we observe:
\begin{lemma}\label{Branching} Let $X$ be a manifold, $f: X \to \C$ be a continuous function, $X' := f^{-1}(\C \setminus\{0\})$. Let $\widetilde{f}: X' \to \C
\setminus \{0\}$ be any continuous function with $\widetilde{f}^n = f|_{X'}$. Then $\widetilde{f}$ extends continuously by $0$ to
all of $X$.
\end{lemma}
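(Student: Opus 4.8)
The plan is to verify directly that the extension prescribed in the statement is continuous; the manifold hypothesis will play no role, only the continuity of $f$ matters. Define $\bar f\colon X\to\C$ by $\bar f|_{X'}=\widetilde f$ and $\bar f\equiv 0$ on $X\setminus X'$, and note that $X\setminus X'=f^{-1}(0)$ since $f$ is $\C$-valued. Since continuity is a pointwise condition, it suffices to check continuity of $\bar f$ at each point $x_0\in X$, and I would split this into the two cases $x_0\in X'$ and $x_0\in f^{-1}(0)$.

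For $x_0\in X'$ the argument is immediate: because $\C\setminus\{0\}$ is open and $f$ is continuous, $X'=f^{-1}(\C\setminus\{0\})$ is an open subset of $X$, and on this open neighborhood of $x_0$ the function $\bar f$ coincides with the continuous function $\widetilde f$; hence $\bar f$ is continuous at $x_0$. The only case with any content is $x_0\in f^{-1}(0)$, where $\bar f(x_0)=0$. Here I would use an $\varepsilon$--neighborhood argument rather than a sequential one, so as to treat boundary points and interior points of $f^{-1}(0)$ uniformly: given $\varepsilon>0$, continuity of $f$ at $x_0$ together with $f(x_0)=0$ furnishes an open neighborhood $U$ of $x_0$ with $|f(x)|<\varepsilon^n$ for all $x\in U$. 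If $x\in U\cap X'$ then
\[
|\bar f(x)|^n=|\widetilde f(x)|^n=|\widetilde f(x)^n|=|f(x)|<\varepsilon^n,
\]
so $|\bar f(x)|<\varepsilon$; and if $x\in U\setminus X'$ then $\bar f(x)=0<\varepsilon$. In either case $|\bar f(x)-\bar f(x_0)|<\varepsilon$ throughout $U$, which shows $\bar f$ is continuous at $x_0$ and finishes the proof.

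I do not expect a genuine obstacle here: the statement is a soft point-set topology fact, and the computation $|\widetilde f(x)^n|=|f(x)|$ does all the work. The one point meriting (minimal) care is to phrase the argument at points of $f^{-1}(0)$ via a single neighborhood estimate, valid simultaneously at points in the closure of $X'$ and at points interior to $f^{-1}(0)$, so that no separate discussion of the two subcases — nor any appeal to first countability — is needed.
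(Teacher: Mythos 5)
Your proof is correct and rests on exactly the same observation as the paper's: since $|\widetilde f(x)|^n=|f(x)|$, smallness of $f$ near a zero forces smallness of $\widetilde f$. The paper phrases this with sequences (legitimate, as $X$ is a manifold) while you use an $\varepsilon$--neighborhood estimate, but the argument is essentially identical.
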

\begin{proof} Extend $\widetilde{f}$ to all of $X$ by $0$. We show that this extension is continuous. For this
let $x_k \in X'$ with $x_k \to x$, where $x \in X\setminus X'$. Then $f(x_k) \to f(x) = 0$ by continuity of
$f$, hence $\widetilde{f}(x_k)^n \to 0$. This, however, implies already $\widetilde{f}(x_k) \to 0 = \widetilde{f}(x)$, which
yields continuity of the extended function.
\end{proof}
Thus we deduce:
\begin{corollary}\label{ExtendKernelTransvWeak} Let $V$ be a Euclidean Jordan algebra. Then the normalized kernel $k_V$ extends continuously to the Shilov boundary and for $z,w \in \check S$ we have
\[k_V(z,w) \neq  0 \Leftrightarrow z \pitchfork w.\]
\end{corollary}
We see in particular from \eqref{CRIntFormula}, that  $B_{\mathcal D}$ extends continuously to a function
\[
B_{\check S}: \check S^{(2)} \times \check S^{(2)} = \{(x,y,z,t) \in \check S^4\,|\, x \pitchfork y, z \pitchfork t\} \to \C,
\]
which is nonzero on $\check S^{(4)}  \subset \check S^{(2)} \times \check S^{(2)}$. It turns out, however, that the present domain for $B_{\check S}$ is too large for our purposes: Neither is the extended cross ratio real-valued on $\check S^{(2)} \times \check S^{(2)}$, nor can we show functoriality for these domains. It turns out, a posteriori, that the following domain is ideally suited for our purposes:
 \begin{definition}\label{DefExtremal}
Let $\mathcal D$ be a bounded symmetric domain and $\check S$ the associated Shilov boundary. A quadruple $(x,y,z,t) \in \check S^{(4)}$ is called \emph{extremal} if any triple $(a,b,c) \
\in \check S^3$ of pairwise distinct points with $a,b,c \in \{x,y,z,t\}$ has either maximal or minimal Maslov index. (Such a triple is then called \emph{maximal} or \emph{minimal} accordingly.) We denote the set of extremal quadruples in $\check S^4$ by $\check
S^{(4+)}$.\\
The \emph{generalized cross ratio} of the Shilov boundary $\check S$ is the function 
\begin{eqnarray}\label{CRKernel}\newnot{BS}
B_{\check S}: \check
S^{(4+)} \to \C^\times, \quad (x,y,z,t)\mapsto \frac{k_V(t,x)k_V(y,z)}{k_V(t,z)k_V(y,x)}.
\end{eqnarray}
\end{definition}
The term \emph{generalized} refers to the following example:
\begin{example}\label{CRClassic}
Consider the Shilov boundary $S^1$ of the Poincar\'{e} disc. We see from Example \ref{CRClassicInterior} that \[B_{S^1}(a,b,c,d) = \frac{(1-d\bar a)(1-b\bar c)}{(1-d\bar c)(1-b\bar a)} = \frac{(a-d)(c-b)}{(c-d)(a-b)} = [a:b:c:d].\]
Similarly, if $\mathcal D = \mathbb D^r$ is a rank $r$ polydisc, then a similar computation (or Lemma \ref{FrameFormulaKernel} below) shows that
\[B_{(S^1)^r}(a,b,c,d) = \left( \prod_{i=1}^r \frac{(a_i-d_i)(c_i-b_i)}{(c_i-d_i)(a_i-b_i)}\right)^{1/r}.\]
In particular, the cross ratio is invariant under the diagonal embedding of $\mathbb D$ into $\mathbb D^r$. This is a first instance of functoriality, which in particular explains our normalization in the reducible case.
\end{example}
We record for later use that the properties of $B_{\mathcal D}$ listed in Corollary \ref{kVProjection} extend by continuity to the boundary extension $B_{\check S}$:
\begin{proposition}\label{AutInvariance}
Let $\mathcal D$ be a bounded symmetric domain of tube type and $\check S$ its Shilov boundary. Then $B_{\check S}$ is invariant under $G(\mathcal D)$. Moreover, if $\mathcal D = \mathcal D_1 \times \mathcal D_2$ is the product of two bounded symmetric domains of respective ranks $r_1, r_2$ with corresponding Shilov boundaries $\check S, \check S_1, \check S_2$ and $p_j: \check S \to \check S_j$ denotes the projection, then
\[B_{\check S}(x,y,z,t)^{r_1+r_2} = B_{\check S_1}(p_1(x),p_1(y),p_1(z),p_2(t))^{r_1}B_{\check S_2}(p_2(x),p_2(y),p_2(z),p_2(t))^{r_2}.\]
\end{proposition}
\begin{remark}
We warn the reader that our kernel function $k_V$ is different from the kernel function denoted $k$ in \cite{CO2}. For simple $V$ the two kernels are related by the formula
\begin{eqnarray}\label{COComparison}
k_V^{2 \cdot {\rm rk}(V)} = k,
\end{eqnarray}
as follows from \cite[Prop.III.4.3]{FK}. For general $V$ the relation is more complicated.
\end{remark}

\section{Functoriality of generalized cross ratios}\label{SecFun}
The goal of this section is the following functoriality result, which a posteriori justifies our normalizations:
\begin{proposition}\label{PropFunctoriality} Let $\mathcal D_1, \mathcal D_2$ be bounded symmetric domains of tube type with respective Shilov boundaries $\check S_1, \check S_2$, let $\beta: \mathcal D_1 \to \mathcal D_2$ be a balanced tight morphism and $\bar \beta: \check S_1 \to \check S_2$ its boundary extension. Then for all $(x,y,z,t) \in \check S^{(4+)}$ we have
\[B_{\check S_2}(\bar \beta(x), \dots, \bar \beta(t)) = B_{\check S_1}(x, \dots, t).\]
\end{proposition}
The notion of a balanced tight morphism will be explained in Definition \ref{DefBalanced} below (see also Example \ref{ExamplesBalanced}).\\

The remainder of this section is devoted to the proof of Proposition \ref{PropFunctoriality}; the reader who is willing to take the proposition on faith, can skip this part except for Definition \ref{DefBalanced}. We have divided the proof into three main steps. We first proof that in the simple case the normalized kernel functions themselves are invariant under suitable morphisms. Such a result is not true for reducible domains, but using a reduction to the irreducible case, we can still obtain a partial invariance result (see Proposition \ref{BalancingTheorem}). This will be sufficient to finally deduce the proposition.

\subsection{The simple case: Invariance of the normalized kernels} 
Throughout this subsection we fix simple Euclidean Jordan algebra $V_1, V_2$ and denote by $\mathcal D_j$ and $\check S_j$, $j=1,2$, the corresponding bounded symmetric domains and their Shilov boundaries respectively. In view of \eqref{CRKernel} the functoriality properties of $B_{\check S_j}$ are closely related to the transformation behaviour of the normalized kernel functions under morphisms. In the simple case, this behaviour is easy to describe:
\begin{lemma}[Clerc-\O rsted]\label{UniversalitySimpleCase}%1
Let $\alpha: V_1 \to V_2$ be a morphism of simple Euclidean Jordan algebras. Denote by  $\mathcal D_1$ and $\mathcal D_2$ respectively the corresponding bounded symmetric domains
and by $\check S_1$ and $\check S_2$ the respective Shilov boundaries. Then for all $z,w \in\mathcal
D_1 \cup \check S_1$ we have
\begin{eqnarray}\label{UniversalityKernel1} k_{V_2}(\alpha^\C(z), \alpha^\C(w)) = k_{V_1}(z,w).\end{eqnarray}
\end{lemma}
\begin{proof} By continuity it suffices to prove \eqref{UniversalityKernel1} for $z, w \in \mathcal D$. Denote by $r_j$ the rank of $V_j$ and let $k_j := k_{V_j}^{2r_j}$. In view of \eqref{COComparison}, these are precisely the kernel functions from \cite{CO2}, whence \cite[Prop. 6.2]{CO2} yields 
\[k_{2}(\alpha^\C(z), \alpha^\C(w)) = k_{1}(z,w)^{\frac{r_2}{r_1}}.\]
This yields immediately
\begin{eqnarray*}
k_{V_2}(\alpha^\C(z), \alpha^\C(w)) 
&=& k_{2}(\alpha^\C(z), \alpha^\C(w))^{\frac{1}{2r_2}} =\left(k_{1}(z,w)^{\frac{r_2}{r_1}}\right)^{\frac{1}{2r_2}}\\
&=&k_{1}(z,w)^{\frac{1}{2r_1}}
= k_{V_1}(z,w).
\end{eqnarray*}
\end{proof}
We have been slightly sloppy here by not specifying the arc of the various roots. Strictly speaking we have only shown that
\[k_{V_2}(\alpha^\C(z), \alpha^\C(w))^{2r_1r_2} = k_{V_1}(z,w)^{2r_1r_2}.\]
However, in view of $k_{V_2}(\alpha^\C(0), \alpha^\C(0)) = k_{V_1}(0,0)$ this is actually enough to deduce
\[k_{V_2}(\alpha^\C(z), \alpha^\C(w)) = k_{V_1}(z,w).\]
We will allow ourselves this kind of sloppyness regarding roots,
whenever it is clear how to make the arguments precise.

\subsection{Balanced morphisms}\label{SubsecBalanced}
Lemma \ref{UniversalitySimpleCase} does not extend to the general case; in fact, we have the following generic counterexample:
\begin{example}\label{BalancedCounterexample}
Consider the Jordan algebra embedding $\alpha: \R^2 \to \R^3$ given by $(\lambda_1, \lambda_2) \mapsto (\lambda_1, \lambda_1, \lambda_2)$. Then
\begin{eqnarray*}k_{\R^2}(\lambda, \mu) &=& (1 - \lambda_1 \overline{\mu_1})^{\frac 1 2}(1 - \lambda_2 \overline{\mu_2})^{\frac 1 2}\\ &\neq& (1 - \lambda_1 \overline{\mu_1})^{\frac 2 3}(1 - \lambda_2
\overline{\mu_2})^{\frac 1 3} =k_{\R^3}(\alpha^\C(\lambda),\alpha^\C(\mu)).\end{eqnarray*}
\end{example}
We want to exclude bad behavior as in the last example. We  denote by $\tr_V$ the Jordan algebra trace of $V$ and remind the reader that \cite[Thm. III.1.2]{FK} for any Jordan frame $(c_1, \dots,
c_r)$ of $V$ we have
\[x = \sum_{j=1}^r \lambda_jc_j \Rightarrow \tr_V(x) = \sum_{j=1}^r \lambda_j.\]
Now we define:
\begin{definition}\label{DefBalanced} A Jordan algebra homomorphism $\alpha: V \to W$ is called \emph{balanced} if for all $v \in V$
\[\frac{1}{\rk V}\tr_V(v) = \frac{1}{\rk W} \tr_W(\alpha(v)).\]
A tight morphism $\beta: \mathcal D_1 \to \mathcal D_2$ is called \emph{balanced} if there exists Jordan algebras $V, W$ and isomorphisms $\mathcal D_1 \cong \mathcal D_V$ and $\mathcal D_2 \cong \mathcal D_W$ intertwining $\beta$ with the complexification of a balanced morphism of Euclidean Jordan algebras.
\end{definition}
The notion is clearly invariant under complexification. Note that nonzero idempotents have positive trace and thus go to nonzero idempotents under balanced morphisms; this shows that every balanced Jordan algebra homomorphism is injective. Moreover, we have the following characterization of balanced Jordan algebra homomorphisms: Let $(c_1, \dots, c_r)$ be a
Jordan frame in $V$ and $\alpha: V \to W$ a Jordan algebra homomorphism. Then $\alpha(c_1), \dots, \alpha(c_r)$ is a family of idempotents with $\alpha(c_i)\alpha(c_j) = 0$ and
$\sum \alpha(c_i) = e$. By Lemma \ref{JordanCompletion} we thus find a Jordan frame $(c_{11}, \dots, c_{1l_1}, \dots, c_{r1}, \dots, c_{rl_r})$ of $W$ such
that
\[\alpha(c_j) = \sum_{k=1}^{l_j} c_{jk}.\]
We have $\tr_W(\alpha(c_j)) = l_j$ and thus $\alpha$ is balanced
if and only if
\[l_1 = \dots = l_r.\]
Conversely, if the latter condition is true for any Jordan frame $(c_1, \dots, c_r)$ of $V$, then $\alpha$ is balanced. Note that we obtain in particular
\[\rk W = l_j \cdot \rk V \quad (j = 1, \dots, r),\]
so that $\rk W$ is divisible by $\rk V$. The
morphism in Example \ref{BalancedCounterexample} clearly violates this condition, and thus is not balanced. In our attempts to prove an invariance
theorem we will restrict attention to balanced Jordan algebra
homomorphisms. Even in this case we cannot quite obtain the same
kind of invariance as in Lemma \ref{UniversalitySimpleCase}. In
order to formulate our weaker result, we introduce the
following terminology: Two elements $v_1, v_2$ are called
\emph{co-diagonalizable} if there exists a Jordan frame $(c_1,
\dots, c_r)$ and elements $\lambda_j \in \mathbb D$, $\mu_j \in
\mathbb D$ such that
\[v_1 = \sum_{j=1}^r \lambda_jc_j \in \mathcal D_{V}, \quad v_2 = \sum_{j=1}^r \mu_jc_j\in \mathcal D_{V}.\]
By \cite[X.2.2]{FK} $x$ and $y$ are co-diagonalizable if and only if $[L(x),L(y)]=0$. Then we have:
\begin{proposition}\label{BalancingTheorem}
Let $\alpha: V \to W$ be an injective homomorphism of Euclidean Jordan algebras. If $\alpha$ is balanced, then for every pair of
co-diagonalizable elements $v_1, v_2 \in \mathcal D$ we have
\begin{eqnarray}\label{BalancingDream} k_W(\alpha^\C(v_1),\alpha^\C(v_2)) = k_{V}(v_1,v_2).\end{eqnarray}
Conversely, if \eqref{BalancingDream} holds for all co-diagonalizable $v_1, v_2 \in \mathcal D_V$, then $\alpha$ is balanced.
\end{proposition}
For the proof we consider first the case, where $V$ is a maximal polydisc in $W$. In this case we have the following version of Proposition \ref{BalancingTheorem}, which is a slight
extension of the results of Clerc and {\O}rsted in \cite{CO2}:
\begin{lemma}\label{FrameFormulaKernel}
If $(c_1, \dots, c_r)$ is a Jordan frame in a Euclidean Jordan algebra $W$ of rank $r$ and $\lambda_j \in {\mathbb D}$, $\mu_j \in {\mathbb D}$, then
\begin{eqnarray}k_{W}(\sum_{j=1}^r \lambda_jc_j, \sum_{j=1}^r \mu_jc_j) = \prod_{j=1}^r(1- \lambda_j\overline{\mu_j})^{\frac 1 r}.\end{eqnarray}
\end{lemma}
\begin{proof} If $W$ is simple, then \cite[Lemma 5.4]{CO2} applies directly and in view of \eqref{COComparison} yields the explicit formula
\[k_W^{2r}(\sum_{j=1}^r \lambda_jc_j, \sum_{j=1}^r \mu_jc_j) = k(\sum_{j=1}^r \lambda_jc_j, \sum_{j=1}^r \mu_jc_j) = \prod_{j=1}^r(1-\lambda_j\overline{\mu_j})^2.\]
We deduce that 
\[k_W(\sum_{j=1}^r \lambda_jc_j, \sum_{j=1}^r \mu_jc_j)= \prod_{j=1}^r(1-\lambda_j\overline{\mu_j})^{\frac 1 r}.\]
in the simple case. For the general case, consider a decomposition $W = W_1 \oplus \dots
\oplus W_n$ into simple ideals. Let $r_l := {\rm rk}(W_l)$ and $(c_{l1}, \dots, c_{lr_l})$ be a Jordan frame for $W_l$. Then $(c_{11},\dots, c_{nr_n})$ is a Jordan frame for $W$
and, in fact, any Jordan frame for $W$ is of this form (as follows e.g. from \cite[Prop. X.3.2]{FK}). Let
\[z := \sum_{l=1}^n \sum_{j=1}^{r_l}\lambda_{lj}c_{lj}, \quad w:= \sum_{l=1}^n \sum_{j=1}^{r_l}\mu_{lj}c_{lj}.\]
By definition we have
\[k_W(z,w)^{{\rk W}} = \prod_{l=1}^n (k_{W_l}(z_l, w_l))^{\rk W_l},\]
where
\[z_l = \sum_{j=1}^{r_l}\lambda_{lj}c_{jl}, \quad w_l := \sum_{j=1}^{r_l}\mu_{lj}c_{lj}.\]
By the simple case we have
\[k_{W_l}(z_l, w_l)^{\rk W_l} = \prod_{j=1}^{r_l}(1- \lambda_{lj}\overline{\mu_j}),\]
and thus
\[k_W(z,w)^{{\rk W}} = \prod_{l=1}^n \prod_{j=1}^{r_l}(1- \lambda_{lj}\overline{\mu_j}).\]
\end{proof}
From this the general case follows easily:
\begin{proof}[Proof of Proposition \ref{BalancingTheorem}] If $\alpha$ is balanced, then $r_V := {\rm rk}(V)$ and $r_W := {\rm rk}(W)$ are related by $r_W = m_{\alpha}r_V$ for some
constant multiplicity $m_\alpha$. Given a
Jordan frame $(c_1, \dots, c_r)$ in $V$ and elements
\[v_1 = \sum_{j=1}^r \lambda_jc_j \in \mathcal D_{V}, \quad v_2 = \sum_{j=1}^r \mu_jc_j\in \mathcal D_{V}\]
with $\lambda_j \in \mathbb D$, $\mu_j \in \mathbb D$ we have
\[\alpha^\C(v_1) = \sum_{j=1}^r \lambda_j\alpha(c_j), \quad \alpha^\C(v_2)= \sum_{j=1}^r \mu_j\alpha(c_j).\]
Now each $\alpha(c_j)$ decomposes as
\[\alpha(c_j) = d_{j1} + \dots + d_{j\mu_{\alpha}},\]
where the $d_{jl}$ are primitive idempotents. Now we obtain
\[k_{V}(v_1,v_2)^{r_V} = \prod_{j=1}^r(1- \lambda_j\overline{\mu_j}),\]
whence
\[k_{V}(v_1,v_2)^{r_W} = \left(\prod_{j=1}^r(1- \lambda_j\overline{\mu_j})\right)^{m_{\alpha}} = \prod_{j=1}^r(1- \lambda_j\overline{\mu_j})^{m_{\alpha}}.\]
Similarly,
\[k_W(\alpha^\C(v_1),\alpha^\C(v_2))^{r_W} = \prod_{j=1}^r \prod_{l=1}^{m_\alpha}(1- \lambda_j\overline{\mu_j})=\prod_{j=1}^r(1- \lambda_j\overline{\mu_j})^{m_{\alpha}}.\]
As $k_{V}(0,0) = k_W(\alpha^\C(0),\alpha^\C(0))^{r_W}$, this implies \eqref{BalancingDream}. On the other hand, if $\alpha$ is not balanced and $v_1, v_2$ are as above, then the multiplicity function $m_\alpha$ is non-constant and thus
\begin{eqnarray*}
k_{V}(v_1,v_2) = \prod_{j=1}^r(1- \lambda_j\overline{\mu_j})^{\frac 1 r_V}
&\neq&\prod_{j=1}^r(1- \lambda_j\overline{\mu_j})^{\frac{m_{\alpha}(c_j)}{r_W}}
= k_W(\alpha^\C(v_1),\alpha^\C(v_2)).
\end{eqnarray*}
\end{proof}
\begin{example}\label{ExamplesBalanced}
The following are examples of balanced Jordan algebra homomorphisms (balanced morphisms of bounded symmetric domains):
\begin{itemize}
\item Jordan algebra homomorphisms $\alpha: V \to W$ between simple Jordan algebras (tight holomorphic morphisms between irreducible bounded symmetric domains) are balanced by Lemma
\ref{UniversalitySimpleCase}.
\item If ${\rm rk}(V) = {\rm rk}(W)$ then every injective Jordan algebra homomorphism $\alpha: V \to W$ is balanced. (Similarly for domains of equal rank.)
\item In particular, maximal polydisc embeddings are balanced.
\item Any Jordan algebra homomorphism $\alpha: \R \to W$ (any tight holomorphic disc) is balanced.
\item Compositions of balanced Jordan algebra homomorphisms (or balanced tight holomorphic morphisms) are balanced.
\end{itemize}
\end{example}
\subsection{Functoriality}
Now we can finally prove Proposition \ref{PropFunctoriality}:
\begin{proof}[Proof of Proposition \ref{PropFunctoriality}] In view of Proposition \ref{tightness} we may assume that $\mathcal D_1 = \mathcal D_{V}$ and $\mathcal D_2 = \mathcal D_W$ for Euclidean Jordan algebras $V, W$ and $\bar \beta = \alpha^\C|_{\check S_1}$ for a balanced morphism $\alpha: V \to W$. By Lemma \ref{GroupLift} we then find a finite covering group $\widehat{G}_V$ of $G_V$ and a group homomorphism $\alpha^\dagger: \widehat{G}_V \to G_W$ making the map $\alpha^\C: \mathcal D_V \to \mathcal D_W$ equivariant. In particular, given $g \in \widehat{G}_V$ there exists $h \in G_W$ such
that for all $v \in \mathcal D_V$
\begin{eqnarray}\label{Equivariance1}\alpha^\C(gv) = h\alpha^\C(v).\end{eqnarray}
By continuity, this identity also holds for all $v \in \check S_1$.
Since the actions of $\widehat{G}_V $ factors through the actions of $G_V$, we see that for every $g \in G_V$ there exists $h \in G_W$ such that
\eqref{Equivariance1} holds for all $v \in \check S_1$. Now if $(v_1, \dots, v_4)$ is extremal then by Proposition \ref{Quadruples} we
find $g \in G_V$ such that $gv_1, \dots, gv_4$ are diagonalized by a common Jordan frame $(c_1, \dots, c_r)$. Let $h \in G_W$ be an element such that \eqref{Equivariance1} holds
for all $v \in \check S_1$. Using Proposition \ref{AutInvariance} and Proposition \ref{BalancingTheorem} we now obtain
\begin{eqnarray*}
B_{\check S_1}(v_1, \dots, v_4) &=& B_{\check S_1}(gv_1, \dots, gv_4)\\
&=& \frac{k_V(gv_4,gv_1)k_V(gv_2,gv_3)}{k_V(gv_4,gv_3)k_V(gv_2,gv_1)}\\
&=& \frac{k_W(\alpha^\C(gv_4),\alpha^\C(gv_1))k_W(\alpha^\C(gv_2),\alpha^\C(gv_3))}{k_W(\alpha^\C(gv_4),\alpha^\C(gv_3))k_W(\alpha^\C(gv_2),\alpha^\C(gv_1))}\\
&=& \frac{k_W(h\alpha^\C(v_4),h\alpha^\C(v_1))k_W(h\alpha^\C(v_2),h\alpha^\C(v_3))}{k_W(h\alpha^\C(v_4),h\alpha^\C(v_3))k_W(h\alpha^\C(v_2),h\alpha^\C(v_1))}\\
&=&B_{\check S_2}(h\alpha^\C(v_1), \dots, h\alpha^\C(v_4))\\
&=& B_{\check S_2}(\bar \beta(v_1), \dots, \bar \beta(v_4)).
\end{eqnarray*}
\end{proof}

\section{Further properties of generalized cross ratios}\label{SecPropCR}

In this section we discuss a couple of basic properties of our generalized cross ratios. In the first subsection, we establish various cocycles properties. In the second subsection, we provide a way to compute generalized cross ratios; as a byproduct, we see that our generalized cross ratios are actually real-valued. Finally, we prove the axiomatic characterization of generalized cross ratios promised in Theorem \ref{Main} of the introduction.

\subsection{Cocycle properties} Generalized cross ratios satisfy various cocycle properties. The key observation for the proof of this fact is the
following simple lemma:
\begin{lemma}\label{CocycLemma}
If $X$ is a set and $k: X^2 \to \C^\times$ is an arbitrary function
then
\[b:\begin{cases}
    X^4\rightarrow  \C^\times\\
    (a,b,c,d) \mapsto\frac{k(d,a)k(b,c)}{k(d,c)k(b,a)}
   \end{cases}
\]
has the following properties:
\begin{eqnarray}
      b(a,b,c,d)&=&b(c,d,a,b) \label{b1}\\
      b(a,b,c,d)&=&b(a,b,c,x)b(a,x,c,d) \label{b2}\\
      b(a,b,c,d)&=&b(a,b,x,d)b(x,b,c,d) \label{b3}
\end{eqnarray}
\end{lemma}
\begin{proof} Straightforward computation.
\begin{comment}
We have
\begin{eqnarray*}
b(c,d,a,b) &=& \frac{k(b,c)k(d,a)}{k(b,a)k(d,c)}\\
&=&\frac{k(d,a)k(b,c)}{k(d,c)k(b,a)}\\
&=& b(a,b,c,d).
\end{eqnarray*}
Moreover,
\begin{eqnarray*}
b(a,b,c,x)b(a,x,c,d)&=&\frac{k(x,a)k(b,c)}{k(x,c)k(b,a)}\frac{k(d,a)k(x,c)}{k(d,c)k(x,a)}\\
&=& \frac{k(d,a)k(b,c)}{k(d,c)k(b,a)}\\
&=&b(a,b,c,d)
\end{eqnarray*}
and
\begin{eqnarray*}
b(a,b,x,d)b(x,b,c,d)&=&\frac{k(d,a)k(b,x)}{k(d,x)k(b,a)}\frac{k(d,x)k(b,c)}{k(d,c)k(b,x)}\\
&=& \frac{k(d,a)k(b,c)}{k(d,c)k(b,a)}\\
&=&b(a,b,c,d).
\end{eqnarray*}
\end{comment}
\end{proof}
Since the normalized kernel is only partially defined, this does not directly apply. Still we have:
\begin{corollary}\label{BVCocyc} Let $\mathcal D$ be a bounded symmetric domain of tube type with Shilov boundary $\check S$. Then the normalized cross ratio
$B_{\check S}: \check S^{4+} \to \C^\times$ satisfies \eqref{b1}-\eqref{b3} above, whenever both sides of the equation are well-defined.
\end{corollary}
\begin{proof} Using Corollary \ref{kVProjection} we can reduce to the irreducible case. In this case, 
Lemma \ref{CocycLemma} yields \eqref{b1}-\eqref{b3} for the weighted Bergman cross ratio $B_{\mathcal D}$, and by continuity these properties extend to $B_{\check S}$.
\end{proof}

\subsection{Real values} 
Let $\mathcal D$ be a bounded symmetric domain of tube type with Shilov boudary $\check S$ and $B_{\check S}$ the generalized cross ratio of $\check S$. The goal of this subsection is to prove that $B_{\check S}$ takes values in $\R \setminus \{0,1\}$. For the computation we may assume $\mathcal D= \mathcal D_V$ for a Euclidean Jordan algebra $V$. Now let $(a,b,c,d) \in \check S^{(4+)}$; if $(a,b,c)$ is maximal then we may apply
Proposition \ref{Quadruples} in order to find $g \in
G$ and a Jordan frame $(c_1, \dots, c_r)$ of $V$ such that
\[g.(a,b,c,d) = (-e,-ie,e, \sum_{j=1}^r\lambda_j c_j).\]
Since the embedding of a maximal polydisc is balanced (Example \ref{ExamplesBalanced}), we can apply Proposition \ref{PropFunctoriality} to obtain
\[B_{\check S}(a,b,c,d) = B_{(S^1)^r}(-e,-ie,e, \lambda),\]
where $\lambda = (\lambda_j)$. Similary if $(a,b,c)$ is minimal then we find $\lambda \in  (S^1)^r$ with 
\[B_{\check S}(a,b,c,d) = B_{(S^1)^r}(e,-ie,-e, \lambda).\]
In any case we may assume $V = \R^r$, $\mathcal D= \mathbb D^r$ and $\check S = (S^1)^r$ and either $(a,b,c) = (-e,-ie, e)$ or $(a,b,c) = (e,-ie,-e)$. We will only discuss the first case here, leaving the second (completely analogous) case to the reader. Since $(-e,-ie, e, \lambda)$ is assumed extremal, the possible values of $\lambda$ are seriously restricted: Indeed, $(-1, \lambda_j, 1)$ is positive iff $\lambda_j$ is contained in the lower half-circle and negative, iff $\lambda_j$ is contained in the upper half-circle.
Since $(-e, \lambda, e)$ is either maximal or minimal we see that either $\lambda_j$ is contained in the lower half-circle for all $j = 1, \dots, r$ or in the upper half-circle
for all $j = 1, \dots, r$. Correspondingly, let us call $\lambda$ \emph{positive} or \emph{negative}. In the positive case, all the $\lambda_j$ are contained in a fixed quarter
circle. For special values of $\lambda$, the expression $B_{(S^1)^r}(-e, -ie,e, \lambda)$ is easy to compute:
\begin{lemma}\label{DullRoot}
If $\lambda_1 = \dots = \lambda_r$, then
\[B_{(S^1)^r}(-e, -ie,e, \lambda) = [-1:-i:1:\lambda_1].\]
\end{lemma}
\begin{proof} The Jordan algebra homomorphism $\R \to \R^r$ given by diagonal embedding is tight and balanced; its complexification maps $(-1, -i, 1, \lambda_1)$ to $(-e, -ie,e, \lambda)$.
Then the lemma follows from Proposition \ref{PropFunctoriality} and Example \ref{CRClassic}.
\end{proof}
This is enough information to determine the sign of $B_{(S^1)^r}(-e, -ie,e, \lambda)$ in general:
\begin{proposition}\label{PropRealCR}
The cross-ratio $B_{(S^1)^r}$ is real-valued on $((S^1)^r)^{(4+)}$. More precisely, $B_{(S^1)^r}(-e, -ie,e, \lambda)$ is positive/negative iff $\lambda$ is positive/negative.
\end{proposition}
\begin{proof} Consider the function $f: (S^1 \setminus\{-1,-i,1\})^r \to S^1$ given by
\[f(\lambda) :=\frac{B_{(S^1)^r}(-e, -ie,e, \lambda)}{|B_{(S^1)^r}(-e, -ie,e, \lambda)|}.\]
We have $B_{(S^1)^r}(-e, -ie,e, \lambda)^r=\prod[-1,-i,1,\lambda_j]\in \R$, hence  $f(\lambda)^r \in \R \cap S^1 = \{\pm 1\}$. Therefore $f$ takes values in the set $R_{2r}$ of $2r$-th roots of unity. Since $R_{2r}$ is discrete and $f$ is continuous, $f$
must be locally constant. In particular, if $\lambda$ and $\mu$ are contained in the same connected component of $(S^1 \setminus\{-1,-i,1\})^r$ and $B_{(S^1)^r}(-e, -ie,e, \mu)$ is a positive/negative
real number, then the same is true for $B_{(S^1)^r}(-e, -ie,e, \lambda)$. Combining this with Lemma \ref{DullRoot} we obtain the proposition.
\end{proof}
We can use the proposition to derive an explicit formula for the generalized cross ratio on the polydisc. Let us call an extremal
quadruple $(a,b,c,d)$ \emph{positive/negative} if it is conjugate to $(-e, -ie,e, \lambda)$ for some positive/negative $\lambda$. Then Proposition \ref{PropRealCR} and Example \ref{CRClassic} combine to the following formula:
\begin{corollary}\label{TrueRoot}
Suppose $(a,b,c)$ is maximal and $(a,b,c,d) \in ((S^1)^r)^{(4+)}$. Then
\[B_{(S^1)^r}(a,b,c,d) = \epsilon(a,b,c,d) \cdot \sqrt[r]{\left|\prod_{j=1}^r [a_j:b_j:
c_j: d_j]\right|},\]
where
\[\epsilon(a,b,c,d) = \left\{\begin{array}{ll}+1 & (a,b,c,d)\text{ positive}\\ -1 & (a,b,c,d)\text{ negative}\end{array}\right.\]
\end{corollary}
In the case, where $(a,b,c,d)$ is positive, there are two possibilities for $d$: Either, each $d_j$ lies in between $a_j$ and $b_j$ or between $b_j$ and $c_j$. This corresponds to
the cases of $(a,d,b)$ or $(b,d,c)$ being maximal. These two cases can be distinguished by the cross ratio as follows:
\begin{lemma}
If $(a,b,c)$ and $(a,d,b)$ are maximal, then $0 < B_{(S^1)^r}(a,b,c,d) < 1$. If $(a,b,c)$ and $(b,d,c)$ are maximal, then $B_{(S^1)^r}(a,b,c,d) > 1$.
\end{lemma}
\begin{proof} The assumptions imply $0 < [a_j: b_j: c_j: d_j] < 1$, respectively $[a_j: b_j: c_j: d_j] >1$ for each $j$, hence the lemma follows from the explicit formula in
Corollary \ref{TrueRoot}.
\end{proof}
We leave it to the reader to formulate the corresponding statements for the case where $(a,b,c)$ in minimal. In any case we obtain:
\begin{corollary}\label{RangeofCR}
We have $B_{\check S}(\check S^{(4+)}) = \R \setminus \{0,1\}$.
\end{corollary}
\begin{proof} Let $(a,b,c,d) \in \check S^{(4+)}$. If $(a,b,c)$ is maximal, then depending on $d$ we have either $B_{(S^1)^r}(a,b,c,d) < 0$ (if $(a,b,c,d)$ is negative) or
$B_{(S^1)^r}(a,b,c,d) < 1$ (if $(a,d,b)$ is maximal) or $B_{(S^1)^r}(a,b,c,d) > 1$ (if $(b,d,c)$ is maximal). If $(a,b,c)$ is minimal one may argue similarly (or reduce to the former
case by means of suitable cocycle properties). This shows the inclusion $\subset$. For the converse inclusion, it suffices to see that $B_{S^1}$ is onto $\R \setminus
\{0,1\}$ and $\R$ has a balanced embedding into every Euclidean Jordan algebra.
\end{proof}
As a consequence of the real-valuedness of the weighted cross ratio we obtain the following additional identity:
\begin{corollary}\label{StrangeCocycle}
For all $(a,b,c,d) \in \check S^{(4+)}$ we have
\[B_{\check S}(a,b,c,d) = B_{\check S}(b,a,d,c).\]
\end{corollary}
\begin{proof}
  This follows immediately from the real-valuedness and the property $\overline{k_{\det}(z,w)} = k_{\det}(w,z)$.
\end{proof}
%\begin{proof} Using Corollary \ref{kVProjection} we can reduce to the irreducible case. Since $B_{\check S}(a,b,c,d) \in \R$ for $(a,b,c,d) \in \check S^{(4+)}$ our claim is equivalent to showing that
%\[\overline{B_{\check S}(a,b,c,d)} = B_{\check S}(b,a,d,c).\]
%We shall establish the later statement for $B_{\mathcal D}$; the corollary will then follow by continuity. Moreover, since the desired symmetry is preserved by passing to a power, it suffices to show that
%\[\overline{B_{\det}(a,b,c,d)} = B_{\det}(b,a,d,c)\quad((a,b,c,d) \in \mathcal D^4),\]
%where
%\[B_{\det}(a,b,c,d) := \frac{{k_{\det}(d,a)}\cdot{k_{\det}(b,c)}}{{k_{\det}(d,c)}\cdot{k_{\det}(b,a)}}\]
%By \cite{CO2} we have
%\[K^*(z,w) = K(w,z)\]
%and thus
%\[\overline{k_{\det}(z,w)} = k_{\det}(w,z)\]
%for all $w, z \in \mathcal D$. In particular, for $(a,b,c,d) \in \mathcal D^4$ we deduce
%\begin{eqnarray*}\overline{B_{\det}(a,b,c,d)}
%&=& \frac{\overline{k_{\det}(d,a)}\cdot\overline{k_{\det}(b,c)}}{\overline{k_{\det}(d,c)}\cdot\overline{k_{\det}(b,a)}}\\
%&=& \frac{k_{\det}(a,d)k_{\det}(c,b)}{k_{\det}(c,d)k_{\det}(a,b)}\\
%&=& \frac{k_{\det}(c,b)k_{\det}(a,d)}{k_{\det}(c,d)k_{\det}(a,b)}\\
%&=& B_{\det}(b,a,d,c).
%\end{eqnarray*}
%\end{proof}

\subsection{Proof of the functorial characterization}\label{MainThmProof} We claim that the family of normalized cross ratios $\{B_{\check S}\}$ satisfies Properties (i)-(iv) from Theorem \ref{Main} and is uniquely characterized by these properties. Indeed, Properties (i) and (iii) were proved in Proposition \ref{AutInvariance}, Property (ii) was established in Proposition \ref{PropFunctoriality}, and Property (iv) was checked in Example \ref{CRClassic}. It thus remains to establish uniqueness in order to prove Theorem \ref{Main}. For this we argue as follows: Given a Shilov boundary $\check S$, any $(a,b,c,d)
\in \check S^{(4+)}$ is contained in the boundary of a maximal polydisc by Proposition \ref{Quadruples}. Since the embedding of a maximal polydisc is balanced, the family
$\{B_{\check S}\}$ is uniquely determined by the family $\{B_{(S^1)^r}\}$. Condition (iii) of Theorem \ref{Main} implies that
\[B_{(S^1)^r}(a,b,c,d)^r = \prod B_{S^1}(a_j, b_j, c_j, d_j).\]
Since $B_\R$ is determined by (iv), this determines $B_{\R^r}^r$ for every $r$. Since $B_{\R^r}$ is assumed real-valued, we have in fact determined $B_{\R^r}$ up to a locally constant function into $\{\pm 1\}$. To fix this sign, consider a diagonal disc embedding $\R \to \R^r$; the transversal quadruples of the Shilov boundary $S^1$ hit every connected component, and therefore determine the sign uniquely. This shows uniqueness and finishes the proof of Theorem \ref{Main}.

\section{Translation lengths and periods}\label{SecTL}
In this section we discuss the relation between translation lengths as special isometries of bounded symmetric domains of tube type and the associated periods, which we think of as
translation lengths measured at infinity. The first subsection estimates the translation length of an arbitrary invertible linear endomorphism of a vector space $V$ on the symmetric space of $GL(V)$ in terms of the corresponding eigenvalues. The same estimate is still valid on those orbits of reductive subgroups of $GL(V)$, which are totally geodesic submanifolds. While the latter condition is automatic for semisimple subgroups, it requires some work to establish this property for the linear automorphism group $G(\Omega)$ of a cone $\Omega \subset V$. Once this is achieved, it is rather easy to estimate translation lengths for elements of $G(\Omega)$ acting on the tube over $\Omega$ with respect to the Bergman metric. The latter estimate will finally enable us to estimate translation lengths of special isometries of bounded symmetric domains of tube type via Cayley transform. Once this estimate is established, it remains only to identify the lower bound as some period of our generalized cross ratio. For computational reasons we first carry out this program in the case of irreducible bounded symmetric domains, but the passage to general bounded symmetric domains is easy due to the axioms satisfied by our generalized cross ratios.

\subsection{Translation length for linear groups}\label{SubsecTLLinear}
We recall from the introduction that given for every action of a group $G$ on a metric space $X$ the \emph{translation length} $\tau_X(g)$ of $g \in G$ on $X$ is defined by the formula\newnot{tau}
\begin{eqnarray}\tau_X(g) := \inf_{x \in X} d(x, g.x).\end{eqnarray}
If $g \in GL(V)$ is an element of the general linear group of some finite-dimensional Hilbert space $V$ and $X = \mathcal P(V)$ is given by the space of positive definite symmetric endomorphisms of $V$ (as described e.g. in \cite[Ch. II.10]{BrHa}) this translation length can be estimated easily. Since $\tau_X(g) = \tau_X(g^{-1})$ we may assume $\det(g) \geq 1$. Then we have:
\begin{lemma}\label{PVEstimate}
Let $g \in GL(V)$ and assume $\det(g) \geq 1$. Then
\[\tau_{\mathcal P(V)}(g) \geq \frac{1}{\sqrt{\dim V} } \cdot \log \det(g)^2.\]
If all eigenvalues of $g$ are of modulus $\geq 1$, then 
\[\tau_{\mathcal P(V)}(g) \leq 2 \cdot \log \det(g)^2.\]
\end{lemma}
\begin{proof} Let $p \in \mathcal P(V)$ and $c: [0, d(p, gp)] \to \mathcal P(V)$ a unit
speed geodesic joining $p$ with $gp$. We deduce from the description in \cite[Ch. II.10]{BrHa} that there exists $h \in GL(V)$ such that $p = hh^{\top}$ and a symmetric endomorphism
$X$ of $V$ of norm $1$ such that
$c(t) = h\exp(tX)h^{\top}$. Moreover, $gp= ghh^\top g^\top$. Since
$c(d(p,gp))=gp$ we have
\begin{eqnarray*}
&&h\exp(d(p, gp) \cdot X)h^{\top} = ghh^\top g^\top\\
&\Rightarrow& \det(h\exp(d(p, gp) \cdot X)h^{\top}) = \det(ghh^\top g^\top)\\
&\Rightarrow& \exp(d(p, gp) \cdot\tr(X)) = \det(g)^2\\
&\Rightarrow& \exp(d(p, gp) \cdot\tr(X)) = \exp(\log \det(g)^2)\\
\end{eqnarray*}
Since both $d(p, gp) \cdot \tr(X)$ and $\log \det(g)^2$ are real this implies
\[d(p, gp) \cdot\tr(X) = \log \det(g)^2.\]
Since $\det(g) \geq 1$ this means
\begin{eqnarray}\label{LengthInPd} d(p, gp) \cdot |\tr(X)| = \log \det(g)^2.\end{eqnarray}
Now observe that
\[|\tr(X)| = |(X|{\bf 1})| \leq \|X\|\cdot\|{\bf 1}\| = 1 \cdot \sqrt{\dim V} = \sqrt{\dim V}.\]
Inserting into \eqref{LengthInPd} we obtain
\[d(p, gp) \geq \frac{1}{\sqrt{\dim V} }|\log \det(g)^2|.\]
Passing to the infimum over all $p \in \mathcal P(V)$ we obtain the first inequality.\\

For the converse inequality we use the following consequence of the existence of a real Jordan canonical form:  Assume that the eigenvalues of $g$ (with multiplicity) are given by $\lambda_1, \dots, \lambda_m$. Then there exists a sequence $h_n \in GL(V)$ such that $(h_n^{-1}gh_n)(h_n^{-1}gh_n)^\top$ converges to a diagonal matrix $\hat g$ with entries $|\lambda_1|^2, \dots |\lambda_m|^2$. In particular we obtain
\begin{eqnarray*}
\tau(g) &=& \inf_{h \in GL(V)} d(hh^\top, ghh^\top g^\top) \leq d({\rm Id}_V, \hat g).
\end{eqnarray*}
Then \cite[Cor. 10. 42]{BrHa} yields 
\begin{eqnarray*}
\tau(g) &\leq& \left(\sum_{j=1}^m (\log|\lambda_j|^2)^2 \right)^{\frac 1 2}\\
&\leq  & 2 \cdot \sum_{j=1}^m |\log|\lambda_j||.
\end{eqnarray*}
Now, if $|\lambda_j| > 1$ for all $j=1, \dots, m$, then the right hand side is precisely given by $2 \cdot \log \det(g)^2$.
\end{proof}
We can use the lemma to compute translation lengths for isometry groups of totally geodesic subspaces of ${\mathcal P(V)}$ by means of the following general result:
\begin{lemma}\label{TLSubspace}
 Let $X$ be a complete $CAT(0)$-manifold. Let $Y\subset X$ a totally geodesic subspace and $h$ be an isometry of $X$ with $hY\subset Y$. Then
  \[\tau_X(h)=\tau_Y(h).\]  
\end{lemma}
\begin{proof} By assumption, $Y$ is closed, convex and complete with respect to the induced metric. This implies \cite[II.2.4]{BrHa} that there exists an orthogonal projection $\pi: X \to Y$. Given $x \in X \setminus Y$ we denote by $\sigma_x$ the constant speed geodesic with $\sigma_x(0) = \pi(x)$, $\sigma_x(1) = x$. By construction, $\sigma_x$ is the unique geodesic which contains $x$ and intersects $Y$ orthogonally. This description implies in particular that
\begin{eqnarray}\label{hSigma}
h\sigma_x = \sigma_{hx} \quad (x \in X \setminus Y).
\end{eqnarray}
For any $y \in Y$ denote by $\tau_y$ the geodesic joining $y$ and $h.y$. By assumption, $\tau_{y}$ is contained in $Y$ for every $y \in Y$. In particular, given $x \in X \setminus Y$, the geodesic $\tau_{\pi(x)}$ is orthogonal to both $\sigma_x$ and $h.\sigma_{x}$, whence the shortest connetion between these two geodesics. We deduce that
\begin{eqnarray*}
d(\sigma_x, h.\sigma_x) = d(\sigma_x \cap \tau_{\pi(x)}, h.\sigma_x\cap \tau_{\pi(x)}) = d(\sigma_x(0), h.\sigma_x(0)). 
\end{eqnarray*}
Combining this with \eqref{hSigma} we obtain for all $x \in X \setminus Y$ the inequality
\[d(x,hx) = d(\sigma_x(1), \sigma_{hx}(1)) \geq d(\sigma_x, h.\sigma_x) = d(\sigma_x(0), h.\sigma_x(0)) = d(p(x), h.p(x)).\]
Then the lemma follows by passing to the infimum.
\end{proof}
We will now combine these two observations to estimate the translation lengths of certain special isometries on bounded symmetric domains. For this we return to our previous notation, i.e. $\mathcal D$ is a bounded symmetric domain realized by means of a Euclidean Jordan algebra $V$ and $G = G_V$. All computations in the remainder of this section are with respect to the (unnormalized) Bergman metric on $\mathcal D \subset V$. In particular, all translation lengths $\tau_{\mathcal D}$ will be with respect to this metric.\\

We now consider isometries $g \in G$ which admit a pair of transverse fixed points $g^{\pm} \in \check S$. We label these two fixed-points in such a way that either $g^{-}$ is non-attractive or $g^+$ is non-repellent and fix some $h \in G$ with $hg^{\pm} = \pm e$. Then 
\begin{eqnarray}
g_1 := h g h^{-1} \in L(Q^{+}),
\end{eqnarray}
and hence
\begin{eqnarray}\label{g2fromg1}
g_2 := \hat c(g_1) = c \circ g_1 \circ c^{-1} \in G(\Omega)
\end{eqnarray}
by Proposition \ref{Levi}. The elements $g_1$ and $g_2$ will depend on the choice of $h$, but the eigenvalues of $g_2$ (considered as an element of $GL(V)$) and, in particular, the determinant of $g_2$ will not. By our choice of fixed points we have $\det(g_2) \geq 1$, hence $g_2$ has at least one eigenvalue of modulus $\geq 1$. If the modulus of all eigenvalues is strictly greater than $1$, then $g^+$ is attractive and $g^-$ is repellent. We then call $(g^+, g^-)$ an \emph{attractor-repellor pair} for $g$. This will be the case in the situations we are most interested in. However, half of our estimates work also without any hyperbolicity assumptions.
\begin{proposition}\label{EstimateWeaklyHyperbolic} Assume $g \in G$ has two transverse fixed points $g^\pm$ labelled as above. Then
\[ \tau_{\mathcal D}(g) \geq \frac{1}{2\cdot \sqrt{\dim V}} \cdot \log \det(g_2)^2,\]
and if all eigenvalues of $g_2$ have modulus $\geq 1$, then
\[ 
\tau_{\mathcal D}(g) \leq  \log \det(g_2)^2.
\]
\end{proposition}
\begin{remark}\label{NotSharp}
The main idea in the proof of Proposition \ref{EstimateWeaklyHyperbolic} is to apply Lemma \ref{PVEstimate} to a linear representation for the Levi factor of a Shilov parabolic. We will use the representation constructed in Corollary \ref{LeviRep}. This representation has the advantage that it can be defined for all classical and exceptional bounded symmetric domains of tube type. We thus obtain a uniform proof of Proposition \ref{EstimateWeaklyHyperbolic}. The disadvantage of our choice of representation is that its dimension is in general much larger than would be necessary; consequently, the constants in Proposition \ref{EstimateWeaklyHyperbolic} are not sharp. Indeed, a case by case argument can be used to provide better constants, most notably in the symplectic case. Since the optimal constants will not be relevant for us, we will not carry out the necessary case by case considerations here.
\end{remark}
\begin{proof}[Proof of Proposition \ref{EstimateWeaklyHyperbolic}]
Since $c$ is an isometry between $\mathcal D$ and $T_\Omega$ we obtain
\begin{eqnarray}\label{Tg2fromg1}
\tau_{\mathcal D}(g) &=& \tau_{T_\Omega}(g_2).
\end{eqnarray}
In view of Lemma \ref{PVEstimate} it thus suffices to establish the equalities
\begin{eqnarray}\label{ProjectionGoal}
    \tau_{T_\Omega}(g_2)  =  \tau_\Omega(g_2)= \frac{1}{2} \tau_{\mathcal P(V)}(g_2).
\end{eqnarray}
Here we think of $\Omega$ as equipped with the restriction of the Hermitian metric from $T_\Omega$. Both equalities are actually consequences of Lemma \ref{TLSubspace}, so let us verify the assumptions: As far as the first equality is concerned, we need to show that the inclusion $i\Omega \subset T_{\Omega}$ is totally geodesic. This seems to be well-known (as stated in \cite[p. 361]{Rothaus} without proof), but for lack of reference let us work out the details: Since $T_\Omega$ is an open subset of $V^\C$, we can identify the
tangent space of $T_\Omega$ at any point $z \in T_\Omega$ with $V^\C$ using the linear connection on $V^\C$. Under this
identification, the Hermitian metric $H$ on $T_\Omega$ admits the following
description (see \cite[Prop. X.1.3]{FK}): Let $n := \dim V$, $r := {\rm rk}(V)$. Then given $z \in
T_\Omega$ and $a,b \in V^\C$ we have
\[H_z(a, b) = \left.\left(\frac{2n}{r}P\left(\frac{z-\bar z}{i}\right)^{-1}a \,\right|\, b\right) = \left.\left(\frac{2n}{r}P\left(2{\rm Im}(z)\right)^{-1}a\,\right|\,b\right) = H_{{\rm Im}(z)}(a,b).\]
In other words, translation in the direction of the real axis is
isometric for $H$. We have $H = g + i \omega$, where $g$ is the
Riemannian metric on $T_\Omega$ and $\omega$ is the K\"ahler form.
In particular, since $\omega$ is skew-symmetric, we have for all
$z \in T_\Omega$ and all $a \in V^\C$ the equality
\begin{eqnarray*}g_z(a,a) = H_z(a,a) &=& H_z({\rm Re}(a),{\rm Re}(a)) + H_z(i{\rm Im}(a),i{\rm Im}(a))\\
&=& g_z({\rm Re}(a),{\rm Re}(a)) + g_z(i{\rm Im}(a),i{\rm Im}(a)).
\end{eqnarray*}
In particular,
\[g_z(a,a) \geq g_z(i{\rm Im}(a),i{\rm Im}(a)) = g_{i{\rm Im}(z)}(i{\rm Im}(a),i{\rm Im}(a)).\]
Thus, given any path $\sigma:[0,1] \to T_\Omega$ with $\sigma(0) = z$, $\sigma(1) = hz$ we have
\begin{eqnarray*}
l(\sigma) &=& \int_{0}^1\sqrt{g_{\sigma(t)}(\dot\sigma(t),\dot\sigma(t) )}dt\\
&\geq& \int_{0}^1\sqrt{g_{i{\rm Im}(\sigma(t))}(i{\rm Im}(\dot\sigma(t)),i{\rm Im}(\dot\sigma(t)))}dt\\
&=& l(i{\rm Im}(\sigma(t))).
\end{eqnarray*}
This means that for every paths in $T_\Omega$ between two points in $i\Omega$ the projection of this path into $i\Omega$ is at most as long. This is precisely, what we had to show.\\ 

For the second equality in \eqref{ProjectionGoal} we argue as follows: Since the stabilizer of $e$ in $G(\Omega)$ is  given by $K(\Omega) := G(\Omega)\cap O(V)$ \cite[Prop. I.4.3]{FK} there is a natural embedding $\iota: \Omega \hookrightarrow \mathcal P(V)$ induced by the inclusion $\hat\iota: G(\Omega) \to GL(V)$. We would like to show that $\iota$ is totally geodesic and isometric with respect to \emph{twice} the restriction of the Bergman metric on $T_{\Omega}$ to $i\Omega$ and the natural metric on $\mathcal P(V)$ used earlier. (This will account for the addition factor $\frac 1 2$.) The second statement is again a simple computation: Denote by $I \in \mathcal P(V)$ the identity matrix. Under the canonical identifications $T_e\Omega \cong V$ and $T_I\mathcal P(V) = {\rm Sym}_{\dim V}(\R)$ the differential of the embedding $\iota$ at $e$  is given by \cite[Thm. III.3.1]{FK}
\[d\iota_e: V \to {\rm Sym}_{\dim V}(\R), \quad x \mapsto L(x).\]
The Bergman metric in $x$ is given by the formula \cite[X.1.3 and Ch. III.4]{FK}
\[
  H_x(u,v):=\frac{r}{2n}\tr_V((P(x)^{-1}u)v),
\]
where $r := \rk V$, $n := \dim V$ and $\tr_V$ is again the Jordan algebra trace. On the other hand, the metric in $\mathcal P (V)$ is given by \cite[Ch. II.10]{BrHa}
\[
  g_x(X,Y)=\tr(x^{-1}Xx^{-1}Y),
\]
where $\tr$ is the usual matrix trace. Since $\tr_V(x)=\frac{r}{n}\cdot \tr(L(x))$ \cite[III.4.2]{FK} we have 
\[
  H_e(u,v)=\frac{1}{2}g_I(L(u),L(v)).
\]
Both the Bergman metric and the restriction of the metric on $\mathcal P(V)$ to the image of $\Omega$ are invariant under $G(\Omega)$; we thus deduce that the Riemannian metrics on $\Omega$ and $\iota(\Omega)$ coincide up to a global factor of $\frac 1 2$ as claimed. We are thus left with proving that $\iota(\Omega)$ is totally geodesic in $\mathcal P(V)$.\\ 

For this we observe that $\iota(\Omega)$ is the orbit of the reductive subgroup $G(\Omega) < GL(V)$. By \cite[Thm. II.10.58]{BrHa} the orbit of such a subgroup $G$ is totally geodesic if for all $X \in \L{gl}(V)$ with $\exp(X) \in G$ already $\exp(tX) \in G$ for all $t \in \R$. Let us verify this for 
$G = G(\Omega)$:  Let $K(\Omega) := G(\Omega) \cap O(V)$ and denote by $\L{p}(\Omega)$ the symmetric matrices in the Lie algebra $\L g(\Omega) \subset  \L{gl}(V)$ of $G(\Omega)$. Then $G(\Omega)$ admits a polar decomposition $G(\Omega) = K(\Omega)\exp(\L{p}(\Omega))$ \cite[Prop. I.1.9, I.4.3 and Thm. III.5.1]{FK}. In particular, if $X$ is a symmetric matrix with $\exp(X) \in G$, then there exist $k \in K(\Omega)$ and $Y \in \L p(\Omega)$ such that
\[e^X = ke^Y \Rightarrow e^{2X} = (e^X)^\top e^X = (ke^Y)^\top ke^Y = e^{2Y}.\]
Then the uniqueness of the Polar decomposition in $GL(V)$ yields $2X = 2Y$, whence $X \in \L p(\Omega)$. This shows that $\iota(\Omega)$ is totally geodesic in $\mathcal P(V)$ and finishes the proof.
\end{proof}

\subsection{Comparison to periods of generalized cross ratios} We keep the notation of the last subsection, in particular $g$ denotes an isometry of $\mathcal D$ with transverse fixed points $g^\pm$ labelled as before. Let us call $z \in \check S$ \emph{admissible} if $(g^-, z, g^+, gz) \in \check S^{(4)}$. Given and admissible point $z$ we define 
the \emph{period of $(g, g^+, g^-)$ with respect to $z$} \newnot{tauinfty} by
\[\tau_\Dca^\infty(g, g^+, g^-)_z := \log B_{\check S}(g^-, z, g^+, gz).\]
If $g^+$ and $g^-$ are clear from the context, we write $\tau_\Dca^\infty (g)$.
\begin{lemma}
The above period does not depend on the admissible point used to define it, i.e.
\[\tau_\Dca^\infty(g, g^+, g^-) := \tau_\Dca^\infty(g, g^+, g^-)_z\]
is well-defined. 
\end{lemma}
\begin{proof} Let $F(y) := B_{\check S}(g^-, y, g^+, gy)$. We claim that $F$ is constant on the set
\[X' := \{w \in \check S\,|\, (g^-, w, g^+, gw) \in \check S^{(4)}\} \subset \check S.\]
If $(g^-, y, g^+, z) \in \check S^{(4)}$ then 
\begin{eqnarray*}
F(z) = B_{\check S}(g^-, z, g^+, gz) &=& B_{\check S}(g^-, z, g^+, y) \cdot  B_{\check S}(g^-, y, g^+, gz)\\
&=& B_{\check S}(gg^-, gz, gg^+, gy) \cdot  B_{\check S}(g^-, y, g^+, gz)\\
&=& B_{\check S}(g^-, y, g^+, gz) \cdot B_{\check S}(g^-, gz, g^+, gy)\\
&=& B_{\check S}(g^-, y, g^+, gy) = F(y);
\end{eqnarray*}
otherwise we can find $w \in X'$ with $(g^-, y, g^+, w), (g^-, z, g^+, w) \in \check S^{(4)}$ which then yields $F(y) = F(w) = F(z)$.
\end{proof}
\begin{remark}
Strictly speaking, our axiomatically defined generalized cross ratio has domain $\check S^{(4+)}$, so that the period can only be defined if $(g^-, z, g^+, gz) \in \check S^{(4+)}$. However, we have constructed an explicit model of the cross ratio on all of $\check S^{(4)}$, which on the subset $ \check S^{(4+)}$ agrees with the axiomatic one. The last lemma then implies that the period as defined above only depends on the axiomatically defined cross ratio, but in order to compute it we can use our concrete model as defined on all of $\check S^{(4)}$.
\end{remark}
Now we can state the main result of this section; our first formulation is for irreducible bounded domains:
\begin{theorem}\label{TranslationLengthFinal} Let $\mathcal D$ be an irreducible bounded symmetric domain of tube type and $g \in G=G(\mathcal D)$ with two transverse fixed points $g^{\pm}$ labelled as above. Then
\[
 \tau_\Dca(g) \geq {\sqrt{\dim_\C \mathcal D}}\cdot \tau_\Dca^\infty(g, g^+, g^-),
\]
and if all eigenvalues of $g_2$ have modulus $\geq 1$, then 
\[
\tau_{\mathcal D}(g) \leq  2 \dim_\C \mathcal D \cdot \tau_\Dca^\infty(g, g^+, g^-).
\]
\end{theorem}
From this we derive the following result in the general case:
\begin{corollary}\label{CRTransl}
Let $\mathcal D$ be a bounded symmetric domain of tube type which decomposes as  $\mathcal D = \mathcal D_1 \times \dots \times \mathcal D_m$ into irreducible bounded symmetric domains. Assume that $g \in G(\mathcal D)^0$ admits two transverse fixed points $g^{\pm} \in \check S$ labeled as above. Then 
\[
  \tau_{\mathcal D}(g) \geq  \sqrt{\min_j \dim_\C \mathcal D_j} \cdot \tau_{\mathcal D}^\infty(g, g^+,     g^-),
\]
and if all eigenvalues of $g_2$ have modulus $\geq 1$, then 
\[
\tau_\Dca(g) \leq  2\cdot  \rk \mathcal D \cdot \max_j \frac{ \dim_\C \mathcal D_j}{\rk \mathcal D_j} \cdot \tau^\infty(g).
\]
\end{corollary}
\begin{proof} Identifying $G(\mathcal D)^0$ with the product of the group $G(\mathcal D_j)^0$ we can then write $g = (g_1, \dots, g_m)$ for some $g_j \in G(\mathcal D_j)^0$. Let us abbreviate $r_j := \rk \mathcal D_j$, $n_j := \dim_\C \mathcal D_j$, $r := \rk \mathcal D$, $\tau_j := \tau_{\mathcal D_j}(g_j)$ and $\tau_j^\infty := \tau_{\mathcal D_j}^\infty(g_j)$ so that
\[\tau_{\mathcal D}(\rho(\gamma)) = \sqrt{\sum_{j=1}^m\tau_j^2}, \quad \tau_{\mathcal D}^\infty(\rho(\gamma)) =  \sum_{j=1}^m\frac{r_j}{r}\tau_j^\infty.\]
By Theorem \ref{TranslationLengthFinal} we thus obtain
\begin{eqnarray*}
\tau_{\mathcal D}^\infty(g) &=&  \sum_{j=1}^m\frac{r_j}{r}\tau_j^\infty \leq \left(\sum_{j=1}^m\frac{r_j}{r}\right) \cdot \max_j \tau_j^\infty\\
&\leq& \sqrt{\sum_{j=1}^m (\tau_j^\infty)^2} \leq \max_{j} \frac{1}{\sqrt{n_j}} \cdot  \sqrt{\sum_{j=1}^m n_j (\tau_j^\infty)^2}\\
&\leq& \frac{1}{\sqrt{\min_j n_j}}  \sqrt{\sum_{j=1}^m \tau_j^2} =  \frac{1}{\sqrt{\min_j n_j}} \tau_{\mathcal D}(g).
\end{eqnarray*}
For the other inequality Theorem \ref{TranslationLengthFinal} yields
\begin{eqnarray*}
\tau_{\mathcal D}(g) &=&  \sqrt{\sum_{j=1}^m \tau_j^2 }\leq \sum_{j=1}^m \tau_j
 \leq 2 \cdot \sum_{j=1}^m \dim_\C \Dca_j \cdot \tau_j^\infty\\ &=& 2 \cdot \sum_{j=1}^m \dim_\C \Dca_j \cdot r \cdot \frac{n_j}{r_j}\cdot \frac{r_j}{r}\cdot  \tau_j^\infty \\
 &\leq & 2\cdot r \cdot \max_j \frac{n_j}{r_j} \cdot \tau^\infty.
\end{eqnarray*}
\end{proof}
Again the constants are not sharp and could be improved along the lines described in Remark \ref{NotSharp}.\\

The remainder of this section is devoted to the proof of Theorem \ref{TranslationLengthFinal}. We will use the notation $ \tau_\Dca^\infty(g)$ as a shorthand for $ \tau_\Dca^\infty(g, g^+, g^-)$. 
The following is immediate from the $G$-invariance of the generalized cross ratio:
\begin{lemma} There exists a dense open subset $X \subset \check S$ such that for all $z \in X$ we have
\[\tau_{\mathcal D}^\infty(g) = \log B_{\check S}(-e, z, e, g_1z).\]
\end{lemma}
Now we use our assumption that $\mathcal D$ is irreducible; we thus have $B_{\mathcal D} = B_{\mathcal D}^{(-\frac 1 {2\dim V})}$. We then use Proposition \ref{BergmanCrossInvarianceInterior} to deduce that
\[B_{\mathcal D}(x,y,z,t) = B_{T_\Omega}^{(-\frac{1}{2\dim V})}(c(x),c(y),c(z),c(t)).\]
where $(x,y,z,t) \in \mathcal D^4$, $T_\Omega$ is the tube over $\Omega$ and $c: \mathcal D \to T_\Omega$ is the Cayley transform. Thus, if $x_n$ is a sequence in $\mathcal D$ converging to $e$ then for all $w$ in a dense open subset of $V$ we have
\begin{eqnarray*}\tau_{\mathcal D}^\infty(g) &=& \frac{1}{2\cdot \dim V} \cdot \log\left( \frac{k_{T_\Omega}(w,0)}{k_{T_\Omega}(g_2w, 0)} \cdot \lim_{n \to \infty} \frac{k_{T_\Omega}(g_2w, c(x_n))}{k_{T_\Omega}(w, c(x_n))}\right).
\end{eqnarray*}
Now the right hand side can be computed explicitly:
\begin{proposition}\label{VTL} With the notations above we have:
\begin{eqnarray*}
  \tau_{\mathcal D}^\infty(g) &=&\frac{1}{2 \cdot {\dim V}} \cdot \log \det(g_2)^2
\end{eqnarray*}
\end{proposition}
\begin{proof} We first show that
\[\lim_{n \to \infty} \frac{k_{T_\Omega}(g_2w, c(x_n))}{k_{T_\Omega}(w, c(x_n))} = 1.\]
Indeed, let $\lambda\in [0,1)$. Then
\[
c(\lambda \cdot e )=i\frac{1+\lambda}{1-\lambda}e.
\]
Using \cite[X.1.3]{FK} we obtain
\begin{eqnarray*}
\lim_{n \to \infty} \frac{k_{T_\Omega}(g_2w,c(x_n))}{k_{T_\Omega}(w,c(x_n))} &=&\lim_{\lambda \rightarrow
1} \left(\frac{\det(g_2w-i\frac{1+\lambda}{1-\lambda}e)}{\det(w-i\frac{1+\lambda}{1-\lambda}e)}\right)^{-\frac{2n}{r}}\\
&=&
\lim_{\lambda \rightarrow
1}\left(\frac{\det(\frac{1-\lambda}{1+\lambda}g_2w-ie)}{\det(\frac{1-\lambda}{1+\lambda}w-ie)}\right)^{-\frac{2n}{r}}
=1.
\end{eqnarray*}
Now it suffices to show that $\frac{k_{T_\Omega}(w,0)}{k_{T_\Omega}(g_2w, 0)} = \det(g_2)^2$. Since $g_2: T_\Omega \to T_\Omega$ is biholomorphic, we see from \cite[Prop. IX.2.4]{FK} that
\[k_{T_\Omega}(w, 0) = k_{T_\Omega}(g_2w, g_20)\det{}_{\C}J_{g_2}(w) \overline{\det{}_{\C}J_{g_2}(0)},\]
where $J_{g_2}$ denotes the complex Jacobi matrix of $g_2$. Note that $g_2$ is a real matrix, because it is in $G(\Omega)^0\subset GL(V)$. Since it is linear, we have $J_{g_2} \equiv g_2$ and $g_20=0$, whence
\[k_{T_\Omega}(w, 0) = k_{T_\Omega}(g_2w, g_20)\det{}_{\C}J_{g_2}(w) \overline{\det{}_{\C}J_{g_2}(0)} = k_{T_\Omega}(g_2w, 0)\det(g_2)^2.\]
Dividing both sides by $k_{T_\Omega}(g_2w, 0)$ the proposition follows.
\end{proof}
Now the theorem follows easily:
\begin{proof}[Proof of Theorem \ref{TranslationLengthFinal}] Since $\dim_\C \mathcal D = \dim V$ the estimates in Proposition \ref{EstimateWeaklyHyperbolic} and Proposition \ref{VTL} yield
\begin{eqnarray*}
\tau_{\mathcal D}(g) &\geq& \frac{1}{2\cdot \sqrt{\dim V}} \log \det(g_2)^2
={\sqrt{\dim_\C \mathcal D}} \cdot \tau_{\mathcal D}^\infty(g).
\end{eqnarray*}
and
\begin{eqnarray*}
  \tau_{\mathcal D}(g) &\leq& \log \det(g_2)^2 = 2\cdot \dim_\C \Dca\cdot  \tau_{\mathcal D}^\infty(g).
\end{eqnarray*}
\end{proof}

\section{Maximal representations, strict cross ratios and well-displacing}\label{SecMaxRep}
Let $\varrho:\Gamma \rightarrow G$ a maximal representation into a Hermitian group of tube type. 
In this section we explain how the generalized cross ratio functions defined above can be used to associate with $\varrho$ a strict cross ratio on the circle in the sense of Labourie \cite{Lab05}. We then combine our estimates for the translation lengths with Labourie's equivalence theorem for strict cross ratios (or rather a version thereof, as given in Appendix \ref{AppLabourie}) in order to derive the well-displacing property for maximal representations. We then deduce the corollaries given in the introduction.
 
\subsection{Maximal representations and limit curves}

Returning to the notation of the introduction, let $\Sigma$ be a closed, oriented
surface of genus $g\geq 2$ with fundamental group $\Gamma$. We fix a hyperbolization of $\Sigma$, i.e. a faithful homomorphism $\Gamma \to PU(1,1)$ with discrete image so that
$\Sigma = \Gamma\backslash \mathbb D$. In particular, we obtain an action of $\Gamma$ on the circle.\\

We also fix a Euclidean Jordan algebra $V$ and denote by $\mathcal D$ and $\check S$ respectively the associated
bounded symmetric domain and Shilov boundary. The corresponding groups $G, K, Q_+$ are defined as before. The aim of this section is to construct a strict cross ratio on the circle in the sense of \cite{Lab05} associated with a
maximal representation $\varrho:\Gamma
\rightarrow G$\newnot{MR}. Our basic references concerning maximal representations are \cite{Surface}, \cite{Anosov} and \cite{LimitCurves}. Let us briefly
recall the main definitions: Denote by
$\omega_{\mathcal D}$ the K\"ahler form on $\mathcal D$ associated with the metric of minimal holomorphic sectional curvature $-1$. Given an arbitrary $\rho$-equivariant map $f:
\mathbb D \to \mathcal D$ we define the \emph{Toledo invariant} $T_{\varrho}$ of $\rho$ by
\[
  T_\varrho:=\frac{1}{2\pi} \int_{\Sigma}f^*\omega_\Dca.
\]
This does not depend on the choice of $f$. The Toledo invariant satisfies a generalized Milnor-Wood inequality, in the present normalization given by
\begin{eqnarray}\label{MW}|T_\varrho|\leq |\chi(\Sigma)|\cdot \rk(V).\end{eqnarray}
Accordingly, the representation $\varrho$ is called \emph{maximal} if $T_\varrho = |\chi(\Sigma)|\cdot \rk(V)$. 
\begin{definition} Let $\varrho: \Gamma \to G$ be a representation. A  $\rho$-equivariant Borel map $\phi: S^1 \to \check S$ is called a \emph{limit curve} for $\rho$. It is called \emph{monotone}, if it maps positive/negative triples in $S^1$ to triples of maximal/minimal Maslov index in $\check S$. \newnot{LC}
\end{definition}
Then we have:
\begin{theorem}[Burger-Iozzi-Wienhard]\label{BIW}
A representation is maximal iff it admits a monotone continuous limit curve.
\end{theorem}
Indeed, it was proved in \cite[Thm. 8] {Surface} that maximal representations are characterized by the existence of a left-continuous monotone limit curve $\phi$. The fact that any such curve is actually continuous was later proved in \cite{LimitCurves}. The main step in the latter proof is to show that every maximal representation has the Anosov property as defined in \cite{LabourieAnosov}. In the symplectic case this property is established in \cite{Anosov}; the general case appears in \cite{LimitCurves}. In fact, as kindly pointed out to us by Olivier Guichard, the Anosov property of maximal representations also yields the following result:
\begin{proposition}\label{LCUnique}
Every maximal representation admits a unique monotone continuous limit curve.
\end{proposition}
The deduction of this proposition from the Anosov property is essentially straight forward. Since the necessary notation involved is however rather heavy we defer the details of the proof to Appendix \ref{AppLimitCurve}. Now let $\varrho: \Gamma \to G$ be a maximal representation and $\phi: S^1 \to \check S$ the associated monotone continuous limit curve. As a consequence of monotonicity two distinct points $x\neq y\in S^1$ are mapped to transverse points under $\phi$; thus if
\[(S^1)^{4*} := \{(x,y,z,t) \in (S^1)^4\,|\, x\neq t, y\neq z\}\]
denotes the domain of the classical cross ratio, then we obtain a map
\[\varphi^{(4)}: (S^1)^{4*} \to \check S^{(2)} \times \check S^{(2)}, \quad (x,y,z,t) \mapsto (\varphi(x),\varphi(y),\varphi(z),\varphi(t)).\]
Moreover, the dense subset $(S^1)^{(4)} \subset (S^1)^{4*}$ satisfies
\begin{eqnarray}\label{TransverseToExtremal}
\varphi^{(4)}((S^1)^{(4)}) \subset \check S^{(4+)},
\end{eqnarray}
where the right hand side is precisely the domain of definition of 
$B_{\check S}$. We may thus define a function
 \begin{eqnarray*}b_\varrho := (\phi^{(4)})^*B_{\check S}: (S^1)^{(4)} \to \R \setminus\{0,1\}, \quad (x,y,z,t) \mapsto B_{\check S}(\phi(x),\phi(y),\phi(z),\phi(t)).\end{eqnarray*}
Since $B_{\check S}$ extends continuously to $\check S^{(2)} \times S^{(2)}$, we may also extend $b_\rho$ to a continuous function
  \begin{eqnarray*}b_\varrho: (S^1)^{4*} \to \R.\end{eqnarray*}
\begin{definition} \label{DefCR} Let $\varrho: \Gamma \to G$ be a maximal representation and $\varphi: S^1 \to \check S$ an associated limit curve. Then the function $b_\varrho: (S^1)^{4*} \to \R$ defined above is called the \emph{cross ratio of the maximal representation $\varrho$}. \newnot{CR}
\end{definition}
Because of our functorial construction of generalized cross ratios the following functoriality of the $b_\rho$ comes for free:
\begin{proposition} 
Let $G, H$ be Hermitian Lie groups of tube type, $\rho: \Gamma \to H$ a maximal representation and $t: H \to G$ a homomorphism inducing a tight holomorphic morphism of the underlying bounded symmetric domains. Then $t \circ \rho$ is maximal and $b_{\rho} = b_{t\circ \rho}$.
\end{proposition}
\begin{proof} The homomorphism $t$ induces a map $t_*: \check S_H \to \check S_G$ of the corresponding Shilov boundaries \cite{Tight}. Now if $\phi$ is a limit curve for $\rho$, then $t_* \circ \phi$ is a limit curve for $t \circ \rho$. Thus the proposition follows from Property (ii) of Theorem \ref{Main}.
\end{proof}
The main properties of cross ratios of maximal representations are collected in the following theorem:
\begin{theorem}\label{CRStrict}
The cross ratio $b_\varrho: (S^1)^{4*} \to \R$ is a continuous $\Gamma$-invariant function satisfying the following properties:
\begin{eqnarray}
      b_{\varrho}(x,y,z,t)&=&b_{\varrho}(z,t,x,y) \label{a1}\\
      b_{\varrho}(x,y,z,t)&=&b_{\varrho}(x,y,z,w)b_{\varrho}(x,w,z,t) \label{a2}\\
      b_{\varrho}(x,y,z,t)&=&b_{\varrho}(x,y,w,t)b_{\varrho}(w,y,z,t) \label{a3}\\
      x=z \text{ or } y=t  &\Leftrightarrow& b_{\varrho}(x,y,z,t)=1  \label{a4}\\
      t=x \text{ or } z=y &\Leftrightarrow& b_{\varrho}(x,y,z,t)=0  \label{a5}
\end{eqnarray}
\end{theorem}
\begin{proof} $\Gamma$-invariance on $(S^1)^{(4)}$ follows from $\Gamma$-equivariance of $\varphi$ and $G$-invariance of $B_{\check S}$ on $S^{(4+)}$ (Proposition \ref{AutInvariance}). By continuity we
obtain $\Gamma$-invariance on all of $(S^1)^{4*}$. By a similar extension argument, Properties \eqref{a1}-\eqref{a3} follow from Corollary \ref{BVCocyc}. For $(x,y,z,t) \in (S^1)^{(4)}$ the inclusion \eqref{TransverseToExtremal} together with Proposition \ref{RangeofCR} implies $b_{\varrho}(x,y,z,t)\not\in \{0,1\}$. It thus remains to consider the cases $x=z$, $y=t$, $t=x$ and $z=y$. In the last two cases 
the vanishing of $\varphi^*k_V$ along the diagonal implies $b_{\varrho}(x,y,z,t)=0$. In the first two cases we get $b_{\varrho}(x,y,z,t)=1$ as a consequence of the similar property for $B_{\check S}$. This establishes \eqref{a4}-\eqref{a5} and finishes the proof.
\end{proof}
In fact, it follows from Corollary \ref{StrangeCocycle} that $b_\varrho$ also satisfies
\[b_\varrho(x,y,z,t) = b_\varrho(y,x,t,z).\]
However, we are not going to use this property in the sequel. In the language of \cite{Lab05} the
theorem says precisely that $b_\varrho$ is a \emph{strict cross ratio}. Concerning such cross ratios we have the following equivalence theorem of Labourie:
\begin{theorem*}[Labourie]\
  Let $b_1$ and $b_2$ be strict cross ratios. Then there exists constants $C,D>0$ such that
  \[D^{-1}|\log b_1|-1\leq |\log b_2|\leq C|\log b_1|+C\]
\end{theorem*}
This is implicitly contained in \cite{Lab05}. For the convenience of the reader we provide a self-contained proof in Appendix \ref{AppLabourie}, see Theorem \ref{PropLab}. Here we just need the following corollary:
\begin{corollary}\label{CREq}
Let $\rho: \Gamma \to G$ be a maximal representation with associated cross ratio $b_\rho$. Then there exists $C>0$ such that for all $(x,y,z,t) \in (S^1)^{4*}$,
\[|\log b_\rho(x,y,z,t)| \geq C \cdot |\log[x:y:z:t]|-1.\]
\end{corollary}

For maximal representations into symplectic groups there is a more classical construction of an associated cross ratio. It is also of the form 
\[b_{class}(x,y,z,t):= B_{class}(\phi(x),\phi(y),\phi(z),\phi(t)),\]
but now $B_{class}$ is the classical symplectic cross ratio as described e.g. in \cite[Ch. 3.2.5]{Lab05}. It is instructive to compare our construction to the classical one:
\begin{example}\label{ExSympl} We claim that for $G = Sp(2n,\R)$ our cross ratio $B_{\check S}$ provides a specific $n$th root for the classical cross ratio $B_{class}$ on its domain of definition.
The claimed relation between the two cross ratios can be established by a direct computation. It clearly suffices to compare the two cross ratios on the Shilov boundary $\check S_P$ of a given maximal polydisc $P$ in $\check S$. We identify $\check S$ with the Lagrangian Grassmannian $\mathcal L(V)$ of $V = (\R^{n} \times \R^n, \omega)$, where $\omega(x,y)=x^\top Jy$ with
  \[
    J=\left(\begin{array}{cc}
			&I_n\\
			-I_n&\\
\end{array}\right)
 .\]
We now define an embedding $\iota: H:=SL(2,\R)^n \to G$ by
\[
  g=(g_1,\ldots,g_n)\mapsto 
\left(\begin{array}{cccccc}
a_1&&&b_1$$\\
&\ddots &&&\ddots&\\
&&a_n&&&b_n\\
c_1&&&d_1$$\\
&\ddots &&&\ddots&\\
&&c_n&&&d_n\\
\end{array}\right)
\] 
and choose $\check S_P := \iota(H).L_0$, where $L_0=\langle(e_1,e_1),(e_2,e_2),\ldots,(e_n,e_n)\rangle$ is a basepoint in $\mathcal L(V)$. (Here $e_k$ denotes the $k$th standard basis vector of $\R^n$ and we identify $\R^{2n}$ with $\R^n \times \R^n$.)  We now provide an $H$-equivariant identification $\nu: (S^1)^n \to \check S_P$:  The action of $g \in H$ on the former is given by $g. \lambda = (S^{-1}gS) \cdot \lambda$, where
\[
  S=\frac{1}{\sqrt{2}}\left(\begin{array}{cc}1&-i\\-i&1 \end{array}\right)
\]
and the $\cdot$-action is given by M\"obius transformations. In particular,
\begin{align*}
g.(1, \dots, 1) =& ((S^{-1}g_1S)\cdot 1, \dots, (S^{-1}g_nS)\cdot 1)\\
=& (S^{-1} \cdot (g_1 \cdot 1), \dots, S^{-1} \cdot (g_n \cdot 1)).
\end{align*}
On the other hand, the action on $\check S_P$ is given by
\begin{align*}
	g\cdot L_0=&\langle \big((a_1+b_1)e_1,(c_1+d_1)e_1\big),\ldots,\big((a_n+b_n)e_n,\big((c_n+d_n)e_n\big)\rangle\\
	=&\langle \big((a_1+b_1)(c_1+d_1)^{-1}e_1,e_1\big),\ldots,\big((a_n+b_n)(c_n+d_n)^{-1}e_n,\big(e_n\big)\rangle\\
	=&\langle \big((g_1\cdot 1)e_1,e_1\big),\ldots,\big((g_n\cdot 1)e_n,e_n\big)\rangle,
\end{align*}
The points $(1,\ldots,1)$ and $L_0$ have the same stabilizer in $H$. Thus the desired identification is given by
\[\nu(\lambda_1, \dots, \lambda_n) = (((S \cdot \lambda_1)e_1, e_1), \dots, ((S \cdot \lambda_n)e_n, e_n)).\]
By functoriality the pullback $\nu^*B_{\check S}$ to $(S^1)^n$ satisfies 
\begin{eqnarray*}
\left(\nu^*B_{\check S}((\lambda_1^{(1)}, \dots, \lambda_n^{(1)}), \dots, (\lambda_1^{(4)}, \dots, \lambda_n^{(4)}))\right)^n  &=& \prod_{j=1}^n \; [\lambda_j^{(1)} : \lambda_j^{(2)} : \lambda_j^{(3)} : \lambda_j^{(4)}].
\end{eqnarray*}
We now recall the definition of the classical cross ratio $B_{class}$: Given Lagrangians $L^{(j)} = \langle l_1^{(j)}, \dots, l_n^{(j)}\rangle$ for $j = 1, \dots, 4$ we define
\[
  B_{class}(L^{(1)},L^{(2)},L^{(3)},L^{(4)}):=\frac{\det(A^{12})\det(A^{34})}{\det(A^{14})\det(A^{32})},
\]
where $A^{ij}$ is the matrix given by
  \[
    A^{ij}_{ab}:=\omega(l^{(i)}_a,l^{(j)}_b).
  \]
If, in particular, $L^{(j)}$ is of the form $L^{(j)} = \langle (\alpha_1^{(j)} e_1, e_1), \dots, (\alpha_n^{(j)} e_n, e_n)\rangle$, then a direct calculation shows that
  \[
    A^{ij}_{ab}:= (\alpha_a^{(j)}-\alpha_b^{(i)})\cdot \delta_{ab}\;\Rightarrow\; \det(A^{ij}) = \prod_{k=1}^n (\alpha_k^{(j)} - \alpha_k^{(i)}),
  \]
and thus
\begin{eqnarray*}
  B_{class}(L^{(1)},L^{(2)},L^{(3)},L^{(4)}) &=&\frac{\prod_{k=1}^n (\alpha_k^{(2)} - \alpha_k^{(1)}) \prod_{k=1}^n (\alpha_k^{(4)} - \alpha_k^{(3)})}{\prod_{k=1}^n (\alpha_k^{(4)} - \alpha_k^{(1)})\prod_{k=1}^n (\alpha_k^{(2)} - \alpha_k^{(3)})}\\
  &=& \prod_{k=1}^n [\alpha_k^{(1)}:\alpha_k^{(2)}:\alpha_k^{(3)}:\alpha_k^{(4)}].
\end{eqnarray*}
In particular we finally obtain
\begin{eqnarray*}
&&\nu^*B_{class}((\lambda_1^{(1)}, \dots, \lambda_n^{(1)}), \dots, (\lambda_1^{(4)}, \dots, \lambda_n^{(4)}))\\
&=& \prod_{j=1}^n \; [S \cdot \lambda_j^{(1)} : S \cdot \lambda_j^{(2)} : S \cdot \lambda_j^{(3)} : S \cdot \lambda_j^{(4)}]\\
&=&  \prod_{j=1}^n \; [\lambda_j^{(1)} : \lambda_j^{(2)} : \lambda_j^{(3)} : \lambda_j^{(4)}],\\
\end{eqnarray*}
which establishes $(\nu^*B_{\check S})^n = \nu^*B_{class}$ and thus $B_{\check S}^n = B_{class}$.\\

We deduce that that $b_{class} = b_\rho^n$. In particular, if $n$ is even, then $b_{class} \geq 0$. It then follows that $b_{class} $ violates Axiom \eqref{a4} and thus is not a strict cross ratio in the sense of Labourie. While $b_{class} $ can obviously be recovered from our $b_\rho$, it is not completely obvious how to find a consistent $n$th root of $b_{class} $. Thus our construction contains valuable additional information even in the most classical case.
\end{example}

\subsection{Translation lengths}

We now apply cross ratios of maximal representations for estimates of the corresponding translation lengths. For this we fix a maximal representation $\rho: \Gamma \to G$ and denote by $\phi$ the associated continuous monotone limit curve. Since every $\gamma \in \Gamma\setminus\{e\}$ is hyperbolic when considered as an element of $PU(1,1)$, it has a unique attractive fixed point $\gamma^+$ and a unique repellent fixed point $\gamma^-$. We may thus define
\begin{eqnarray}
g^{\pm} := \phi(\gamma^{\pm}).
\end{eqnarray}
Then we have:
\begin{proposition}
The pair $(g^+, g^-)$ is an attractor-repellor pair for $\rho(\gamma)$.
\end{proposition}
\begin{proof} By Lemma \ref{contraction} the element $\rho(\gamma)$ contracts a dense open subset of $\check S$ to $g^+$. This implies that the corresponding element $g_1$ contracts a dense open subset of $\check S$ to $e$, and thus for every $v \in V$ we have $g_2^n.v \to \infty$. This implies that every eigenvalue of $g_2$ has modulus $>1$.
\end{proof}
In particular, we can define the associated period \[\tau_{\mathcal D}^\infty(\rho(\gamma)) := \tau_{\mathcal D}^\infty(\rho(\gamma), g^+, g^-);\] we then have for any $\xi \in S^1 \setminus\{\gamma^\pm\}$,
\begin{eqnarray}\label{PeriodCR}
\tau_{\mathcal D}^\infty(\rho(\gamma)) &=& b_\rho(\gamma^-, \xi, \gamma^+, \gamma.\xi).
\end{eqnarray}
Now we have the following special case of Corollary \ref{CRTransl}:
\begin{theorem}\label{MainEstimateCR} Let $\Sigma$ be a closed oriented surface of negative Euler characteristic and $\Gamma = \pi_1(\Sigma)$ its fundamental group. Let $G$ be a semisimple Hermitian Lie group with finite center and associated bounded symmetric domain $\mathcal D$ and $\rho: \Gamma \to G$ a maximal representation. Then there exist positive constants $C_1(\mathcal D), C_2(\mathcal D)$ depending only on $\mathcal D$ such that for all $\gamma \in \Gamma$,
\[C_1(\mathcal D) \cdot \tau_{\mathcal D}^\infty(\rho(\gamma)) \leq  
  \tau_{\mathcal D}(\rho(\gamma)) \leq  C_2(\mathcal D)  \cdot \tau_{\mathcal D}^\infty(\varrho(\gamma)),\]
where the period is given by \eqref{PeriodCR} and the translation length is taken with respect to the (unnormalized) Bergman metric on $\mathcal D$.
\end{theorem}
Indeed, if  $\mathcal D_1, \dots, \mathcal D_l$ are the irreducible factors of $\mathcal D$ then we can choose
\[C_1(\mathcal D) :=   \sqrt{\min_j \dim_\C \mathcal D_j}\]
and
\[C_2(\mathcal D) :=  2\cdot {\rm rk}\, \mathcal D \cdot \max_j \frac{\dim_\C \mathcal D_j}{{\rm rk}\, \mathcal D_j}.\]
\subsection{Well-displacing}

To establish the desired well-displacing property for maximal representations we need a version of the Milnor-\v{S}varc lemma. Given a group $\Gamma$ with finite generating set $S$ we denote by $\|\,\cdot \,\|_S$ the word length with respect to $S$ and by
\[d_S(\gamma_1, \gamma_2) := \|\gamma_2^{-1}\gamma_1\|_S\]
the associated word metric. Then the classical version of the Milnor-\v{S}varc lemma is given as follows (see {\cite[Prop. I.8.19]{BrHa}}):
\begin{lemma}[Milnor-\v{S}varc]\label{MilSva}
Let $(X,d)$ be a length space. If a group $\Gamma$ acts properly
and cocompactly by isometries on $X$, then $\Gamma$ is finitely
generated and for every finite generating set $S$ with associated
word metric $d_S$ on $\Gamma$ and every basepoint $x_0 \in X$ the
map
\[(\Gamma, d_S) \to (X,d), \quad \gamma \mapsto \gamma.x_0\]
is a quasi-isometry.
\end{lemma}
Here we are interested in the case where $\Gamma$ is the fundamental group of a closed oriented surface and $X = \mathbb D$. In this setup we will need a version of the Milnor-\v{S}varc lemma which compares the translation length $l_S$ of $(\Gamma, S)$ (as defined in \eqref{LengthFunction} on page \pageref{LengthFunction}) to the translation length of $\mathbb D$. The following inequality is sufficient for our purposes:
\begin{corollary}\label{DiscTranslationLength}
Let $S$ be an arbitrary finite generating set for $\Gamma$. Then there exist constants $A, B > 0$ such that for every $\gamma \in \Gamma$
\[\tau_{\mathbb D}(\gamma) \geq A \cdot l_S(\gamma) - B.\]
\end{corollary}
\begin{proof}
 We fix a compact fundamental domain $F$ for the $\Gamma$-action on
  $\mathbb{D}$. We know that every $\gamma\in \Gamma$ is hyperbolic, i.e. there
  exists a geodesic $\sigma$ on which $\gamma$ acts by translation
  and we have $\gamma\cdot
  \sigma(t)=\sigma(t+\tau_\mathbb{D}(\gamma))$ for all $t$. There
  exists $\eta\in \Gamma$ such that $\eta \sigma$ intersects $F$,
  say $y:=\eta \sigma(t_0)\in F$. Then we have for any $x\in F$:
  \[
    d(x,\eta\gamma\eta^{-1}x)\leq
    d(x,y)+d(y,\eta\gamma\eta^{-1}y)+d(\eta\gamma\eta^{-1}y,\eta\gamma\eta^{-1}x)\leq
    2\text{diam}(F)+\tau_\mathbb{D}(\eta\gamma\eta^{-1}).
  \]
  Now we fix $x\in F$ and apply the Milnor-\v{S}varc lemma with $x _0= x$ to find positive constants $A, B'$ satisfying
  \[d(x, \gamma x) = d(e x, \gamma x) \geq A \cdot d_S(e, \gamma) - B' = A \cdot l_S(\gamma) - B'\]
  for all $\gamma \in \Gamma$. We deduce that
  \begin{eqnarray*}
    \tau_\mathbb{D}(\gamma)&=&\tau_\mathbb{D}(\eta\gamma\eta^{-1})\geq
    d(x,\eta\gamma\eta^{-1}x)-2\text{diam}(F)\\
    &\geq &A\cdot l_S(\eta\gamma\eta^{-1})-B'-2\text{diam}(F)=A \cdot
    l_S(\gamma)-(B'+2\text{diam}(F)).
  \end{eqnarray*}
\end{proof}
Combining this with Labourie's equivalence theorem for cross ratios (in the form of Corollary \ref{CREq}) and Theorem \ref{MainEstimateCR} we then obtain well-displacing of maximal representations:
\begin{theorem}[Well-displacing]\label{WD}
Let $\Gamma$ be the fundamental group of a closed oriented surface $\Sigma$, $\mathcal D$ a bounded symmetric domain and $S$ a finite generating set for $\Gamma$. Then for every
 maximal representation $\rho: \Gamma \to G(\mathcal D)^0$ there exist $A,B>0$ such that for all $\gamma \in \Gamma$, 
\[
    \tau_{\mathcal D}(\rho(\gamma))\geq A \cdot l_S(\gamma)-B.
\] \end{theorem}
\begin{proof} Using Corollary \ref{CREq}, Theorem \ref{MainEstimateCR}, Equation (\ref{DiscEquality}) and Corollary \ref{DiscTranslationLength}
we find positive constants $C_1, \dots, C_4$ such that
\begin{eqnarray*}
\tau_{\mathcal D}(\rho(\gamma)) &\geq& C_1 \cdot \tau^\infty_{\mathcal D}(\rho(\gamma)) \\
&=& C_1 \cdot \log b_\rho(\gamma^-, \xi, \gamma^+, \gamma \xi)\\
&\geq& C_2\cdot \log [\gamma^-: \xi: \gamma^+: \gamma \xi]-1\\
&=& C_2 \cdot \tau_{\mathbb D}^\infty(\gamma)-1\\
&=& C_2 \cdot \tau_{\mathbb D}(\gamma) -1\\
&\geq& C_2C_3 \ell_S(\gamma)-C_2C_4-1.
\end{eqnarray*}
\end{proof}
Note that compactness of $\Sigma$ was indispensable for the proof of Theorem \ref{WD}.

\subsection{Proofs of Corollaries  \ref{CorQI}-\ref{CorProperness2}}\label{SubsecCorollaries}  
All three corollaries are well-known consequences of the well-displacing property established in Theorem \ref{WD}. For the convenience of the reader we provide some explicit references:\\ 

Corollary \ref{CorQI} follows from \cite[Prop. 4.2.1]{DGLM} and \cite[Lemma 4.0.4]{DGLM}, since higher genus surface groups are hyperbolic.\\

Corollary \ref{CorQI2} follows from \cite[Lemma 2.7]{AnnaMCG} (or Corollary \ref{CorQI} and the Milnor-\v{S}varc lemma) and the proof of Theorem \ref{WD}.\\

Corollary \ref{CorProperness} follows from Corollary \ref{CorQI2} and \cite[Prop. 2.4]{AnnaMCG}. \\

Finally, Corollary \ref{CorProperness2} follows from \cite[Thm. 5.2.2]{Lab05}.

\newpage

\appendix

\section{Complements to the theory of Euclidean Jordan algebras}\label{AppJordan}

Throughout this article we have made essential use of results from the theory of Euclidean Jordan algebras. Most of these results are standard and can be found in the literature, see in particular \cite{FK, BraunKoecher}. However, there are a couple of facts for which we were unable to find explicit references; for the convenience of the reader we collect these results in the present appendix.\\

Let us start by recalling two folklore results. One of the most important notions in the theory of Euclidean Jordan algebras is that of a Jordan frame \cite[p. 44]{FK}. In this context we will need the following standard lemma in Subsection \ref{SubsecBalanced}:
\begin{lemma}\label{JordanCompletion}
Let $d_1, \dots, d_m$ be a collection of pairwise orthogonal idempotents in a Euclidean Jordan algebra $V$ with $d_1 + \dots + d_m = e$. Then there exists a Jordan frame $c_1,
\dots, c_r$ of $V$ and numbers $i_1 < \dots < i_m < i_{m+1} = r$ such that
\[d_j = \sum_{l = i_j+1}^{i_{j+1}} c_l.\]
\end{lemma}
\begin{proof} Let us call a collection $(d_1, \dots, d_m)$ as in the lemma a pre-Jordan frame. By finite-dimensionality of $V$ it suffices to show the following: If $(d_1, \dots, d_m)$ is a pre-Jordan frame and $d_1 = f_1 + f_2$ with $f_1, f_2$ idempotents, then the $(m+1)$-tuple $(f_1, f_2, d_2 \dots, d_m)$ is again a pre-Jordan frame. Indeed,   $f_1 + f_2 + d_2 + \dots + d_m = e$. Moreover we have
\[f_1 + f_2 = d_1 = d_1^2 = (f_1 + f_2)^2 = f_1^2 + 2f_1f_2 + f_2^2 = f_1 + f_2 + 2f_1f_2,\]
whence $f_1f_2 = 0$. This implies in particular that
\[d_1f_j = (f_1 + f_2)f_j = f_j^2 = f_j,\]
whence $f_1, f_2$ are in the $1$-eigenspace of $d_1$, while $d_2, \dots, d_m$ are in the $0$-eigenspace of $d_1$. Then the lemma follows from the orthogonality of these eigenspaces \cite[Satz I.12.3 a)]{BraunKoecher}.
\end{proof}
Given a Euclidean Jordan algebra with associated bounded symmetric domain $\mathcal D$ one can also characterize the Shilov boundary $\check S$ of $\mathcal D$ in terms of Jordan frames. This is the content of the following proposition, which appears in the proof of  \cite[Proposition X.2.3]{FK} and will be used in the proof of Proposition \ref{Quadruples}:
\begin{proposition}\label{shilovpoints}
For every $z\in \check S$ there exists a Jordan frame $(c_1,\ldots,c_r)$ and complex numbers $\lambda_i$ with
 $|\lambda_i|=1$ such that
 \[
  z=\sum_{i=1}^r \lambda_i c_i.
 \]
\end{proposition}
While the above two results are well-known, the following more specific results seem to be new. Our first result concerning morphisms of Euclidean Jordan algebras is needed to complete the proof of Proposition \ref{tightness}.
\begin{proposition}\label{tightnessAppendix}
Let $\mathcal D_1, \mathcal D_2$ be bounded symmetric domains of tube type with respective Shilov boundaries $\check S_1$ and $\check S_2$, and $\beta: \mathcal D_1\to  \mathcal D_2$ be a boundary morphism. Then there exist Euclidean Jordan algebras $V_1$, $V_2$, a Jordan algebra homomorphism $\alpha: V_1 \to V_2$ and isomorphisms $\mathcal D_j \cong \mathcal D_{V_j}$ intertwining $\beta$ and $\alpha^\C$.
\end{proposition}
The proof uses the theory of positive Hermitian Jordan triple systems (pHJts'). We refer the reader to \cite{Clerc} for background. We recall that the unit balls of such triples systems (always with respect to the spectral norm) are circled (i.e. invariant under the diagonal multiplication with elements of $S^1$) and symmetric (see \cite[Thm. 4.1]{Loos}), and that every bounded symmetric domain arises as the unit ball of a pHJts (see  \cite[Thm. 1.6 and Thm. 4.1]{Loos} and \cite{Clerc}). Every morphism of pHJts' induces a morphism of the corresponding unit balls. Conversely we have:.
\begin{lemma}\label{LoosLemma} Let $W_1, W_2$ be positive Hermitian Jordan triple systems and $\mathcal D_1, \mathcal D_2$ their unit balls with respect to the respective spectral norms. Then every morphism $\beta: \mathcal D_1 \to \mathcal D_2$ with $\beta(0) = 0$ extends to a morphism $W_1 \to W_2$ of pHJts.
\end{lemma}
\begin{proof} We adapt an argument of Loos \cite{Loos} going back to Cartan \cite[p. 30]{Cartan} (see also \cite[L. X.5.2]{FK}): Consider the maps $\beta^{(1)}_t(z) := \beta(e^{it}z)$ and $\beta^{(2)}_t(z) := e^{it}\beta(z)$ for $t \in \R$. Since $\mathcal D_1$ and $\mathcal D_2$ are circled, these map $\mathcal D_1$ into $\mathcal D_2$; moreover, both maps are affine, since $\beta$ is, and share the same $z$-derivative at the origin. Since also $\beta^{(1)}_t(0) = \beta^{(2)}_t(0) = 0$ we deduce \cite[Prop. 3.2]{Postnikov} that $\beta^{(1)}_t = \beta^{(2)}_t$; comparing Taylor expansions, we see that $\beta$ is linear and thus extends to $\beta: W_1 \to W_2$. Since the derivative of a morphism of bounded symmetric domains is a morphism of Jordan triple systems \cite[Thm. III.2.8]{Bertram} and the exponential map intertwines the Jordan triple structures on $W_j$ and $T_0W_j$, the lemma follows.
\end{proof}
Now we can deduce Proposition \ref{tightnessAppendix}:
\begin{proof}[Proof of Proposition \ref{tightnessAppendix}]
By applying suitable isomorphisms we may assume that $\mathcal D_1$ and $\mathcal D_2$ are the unit balls of pHJts' $W_1, W_2$ with respect to the corresponding spectral norms and that $\beta(0) = 0$. Then Lemma \ref{LoosLemma} applies and provides a linear extension $\beta: W_1 \to W_2$, which is a morphism of Euclidean Jordan triple systems. Note that by uniqueness, $\beta|_{\check S_1}$ is the boundary extension of $\beta$. Since $\mathcal D_1$ and $\mathcal D_2$ are of tube type, the elements of $\check S_j$ are precisely the maximal tripotents of the Jordan triple system $W_j$ \cite[Thm. 4.2]{Clerc}. Now pick $e_1 \in \check S_1$ arbitrarily and define $e_2 := \beta(e_1)$. Out of the respective triple products $\{\cdot, \cdot, \cdot\}$ we then obtain complex Jordan algebra structures on $W_1$ and $W_2$ by \[x \cdot y := \{x,e_j,y\};\] by construction, $\beta$ is a morphism $(W_1, \cdot) \to (W_2, \cdot)$ and maps the Euclidean real forms given by
\[V_{j}:= \{z \in W_j\,|\, \{e_j,z,e_j\} = z\}\]
to each other. Then the restriction $\alpha: V_1 \to V_2$ is the desired morphism of Euclidean Jordan algebras with $\alpha^\C|_{\mathcal D_1} = \beta$.
\end{proof}
Our final goal is to express the notion of transversality for Shilov boundaries of bounded symmetric domains of tube type in Jordan theoretic terms. For this we denote by
\[K: V^\C \times V^\C \to {\rm End}(V^\C)\]
the automorphy kernel of $V$. Given $x\in V^\C$ let $L_0(x)$ be the restriction of $L(x)$ to the subalgebra generated by all powers of $x$ and denote by $\det_V(x):=\det(L_0(x))$ the \emph{Jordan algebra determinant} of $x$ (see \cite[Ch. II.2]{FK}). Then we have the following characterization of transversality:
\begin{proposition}\label{TransMain}
Let $V$ be a Euclidean Jordan algebra, $\mathcal D$ the associated
bounded symmetric domain and $\check S$ its Shilov boundary. Then
$z,w \in \check S$ are transverse iff one of the following equivalent
conditions holds true:
\begin{itemize}
\item[(i)] $\det_V(z-w) \neq 0$.
\item[(ii)] $K(z,w)$ is invertible.
\item[(iii)] $K(z,w) \in {\rm Str}(V^\C)$.
\item[(iv)] $\det K(z,w) \neq 0$.
\end{itemize}
\end{proposition}
The lion's share of the proof is provided in \cite{CO2}. In order to complete the arguments given there, we need to understand the transformation behavior of the automorphy kernel. For this we remark that by  \cite[Ch. II, Sec. 5]{Satake} there exists a function $J:G \times \overline{\mathcal D} \to {\rm
Str}(V^\C)$, called the \emph{canonical automorphy factor}, satisfying
\begin{eqnarray}\label{AutomorphyI}K(gz, gw) = J(g,z)K(z,w)J(g,w)^*\end{eqnarray}
for $g \in G$, $z,w \in \overline{\mathcal D}$. 
\begin{proof}[Proof of Proposition \ref{TransMain}]
Let us first prove equivalence of the statements (i)-(iv): The implication (i) $\Rightarrow$ (ii) is provided in \cite[Lemma 5.1]{CO2}. The implication (ii) $\Rightarrow$ (iii) follows from the fact that $K(z_0,w_0) \in {\rm
Str}(V^\C)$ for $z_0, w_0 \in \mathcal D$ together with the continuity of $K$ and the fact that ${\rm Str}(V^\C)$ is closed in ${\rm GL}(V^\C)$. Finally, the implication (iii)
$\Rightarrow$ (iv) is obvious. Thus it remains to show (iv) $\Rightarrow$ (i). Thus let $w,z \in \check S$ be arbitrary and
assume $\det K(z,w) \neq 0$. We first claim that there exists $g \in G$ such that $e-g\cdot w$ and $e-g\cdot z$ are
invertible. Indeed, if $\mathcal D$ is a polydisc with $w=(w_1,\ldots,w_r)$ and $z=(z_1,\ldots,z_r)$, then one can clearly
find an element $g=(g_1,\ldots,g_r)\in SO(2)^r\subset G$ such that $g_i\cdot w_i$ and $g_i\cdot z_i$ are both not equal to
$1$. Since any triple of points in the Shilov boundary is contained in the boundary of a common polydisc \cite[Thm. 3.1]{ClercNeeb}, the general case can be reduced to this, thereby
finishing the proof of the claim. Next observe that \eqref{AutomorphyI} implies
    \[
     \det K(gz,gw)=\underbrace{\det(J(g,z))}_{\neq 0}\det K(z,w)\underbrace{\overline{\det(J(g,w))}}_{\neq 0},
    \]
and thus our assumption yields $\det K(gz, gw) \neq 0$. Now note that with $e-gw$ also $e-\overline{gw} = \overline{e-gw}$ is invertible , hence
\cite[Lemma X.4.4 ii)]{FK} applies and yields
    \[
        K(gz,gw)=P(e-gz)P(c(gz)+c(\overline{gw}))P(e-\overline{gw}),
    \]
whence
\[\det(P(c(gz)+c(\overline{gw})))\neq 0.\]
A simple calculation shows that $c(\overline{gw}))=-\overline{c(gw)}$. Since
$gw \in \check S\cap D(c)$, the image $c(gw)$ is contained in $V$, whence $\overline{c(gw)}=c(gw)$. We thus obtain
\[\det(P(c(gz)-c(gw)))\neq 0.\]
Using the definition of the Cayley transform and \cite[p.190]{FK} we obtain
        \begin{align*}
          c(gz)-c(gw)=&
          i\big((e+gz)(e-zg)^{-1}-(e+gw)(e-gw)^{-1}\big)\\
          =& i\big(-ie+2i(e-gz)^{-1}+ie-2i(e-gw)^{-1}\big)\\
          =&-2\big((e-gz)^{-1}-(e-gw)^{-1}\big).
        \end{align*}
We thus obtain
        \begin{eqnarray}\label{Transv2}
            \det P(-2((e-gz)^{-1}-(e-gw)^{-1}))\neq 0.
        \end{eqnarray}
    Now we can apply Hua's formula \cite[Lemma X.4.4]{FK} to obtain
        \begin{align*}
            &P(-2((e-gz)^{-1}-(e-gw)^{-1}))\\=&P(e-gz)^{-1}P(-2((e-gz)-(e-gw)))P(e-gw)^{-1}\\
            =&P(e-gz)^{-1}P(-2(gw-gz)))P(e-gw)^{-1}.
        \end{align*}
    Combinining this with \eqref{Transv2} and using that $P(e-gz)^{-1}$ and $P(e-gw)^{-1}$ are invertible, we obtain
    \[\det P(-2(gw-gz))) \neq 0.\]
    Thus $P(-2(gw-gz))$ is invertible. By \cite[Prop. II.3.1]{FK} this implies that $-2(gw-gz)$ and hence $gz-gw$ is
    invertible. Thus $\det_V(gz-gw) \neq 0$, which by \cite[Prop.
    3.2]{CO2} implies $\det_V(z-w) \neq 0$. This finishes the proof of the equivalence of (i)-(iv).\\
    
    We deduce in particular that
    \[\check S^{[2]} := \{(z,w) \in \check S\,|\, \det K(z,w) \neq 0\} = \{(z,w) \in \check S\,|\, \det{}_V(z-w) \neq 0\}\]
    The first description together with  \eqref{AutomorphyI} and the continuity of $\det K(\cdot, \cdot)$
    imply already that $\check S^{[2]}$ is $G$-invariant and open; the second description together with \cite[Prop. 3.4]{CO2} shows that $\check S^{(2)}$ is even a $G$-orbit. Since $\check S^{[2]}$ is the unique open $G$-orbit in $\check S^2$ we obtain $\check S^{(2)} = \check S^{[2]}$, which finishes the proof.
\end{proof}
Proposition \ref{TransMain} implies immediately:
\begin{corollary} The image $p(V)$ of $V$ under the inverse Cayley transform is precisely the subset of points in $\check S$, which are transverse to $e$.
\end{corollary}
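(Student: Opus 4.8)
The plan is to translate the statement into purely Jordan-algebraic terms and then verify two inclusions by direct computation. By Proposition~\ref{TransMain}(i), a point $z \in \check S$ is transverse to $e$ if and only if $\det(z-e) \neq 0$, i.e.\ if and only if $e-z$ is invertible in $V^\C$, which is exactly the defining condition for $z \in D(c)$. Thus the assertion is equivalent to the identity $p(V) = \check S \cap D(c)$, and since $p\colon D(p)\to D(c)$ and $c\colon D(c)\to D(p)$ are mutually inverse, it suffices to prove $V \subseteq D(p)$ (which is what makes $p(V)$ meaningful), the inclusion $p(V)\subseteq \check S$, and the inclusion $c(\check S\cap D(c))\subseteq V$. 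Throughout I would work inside the commutative associative subalgebra generated by one element and $e$, so that all the rational expressions occurring in $c$ and $p$ may be manipulated formally; power-associativity (\cite[II.1.2]{FK}) legitimises this.

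For the first two points, let $x\in V$. Writing $x=\sum_j\lambda_jc_j$ with $\lambda_j\in\R$ by the spectral theorem (Proposition~\ref{spectralthm}), one sees that $x\pm ie=\sum_j(\lambda_j\pm i)c_j$ is invertible; in particular $x\in D(p)$ and $p(x)=(x-ie)(x+ie)^{-1}$ is an invertible element of $V^\C$ with inverse $(x+ie)(x-ie)^{-1}$. Since complex conjugation on $V^\C$ is the anti-linear extension of the identity on $V$, hence an anti-linear Jordan automorphism fixing $x$, we obtain $\overline{p(x)}=(\overline{x-ie})(\overline{x+ie})^{-1}=(x+ie)(x-ie)^{-1}=p(x)^{-1}$, so $p(x)\in\check S$ by the characterization \eqref{Shilov}. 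As $p(x)\in D(c)$ automatically, Proposition~\ref{TransMain}(i) then gives $p(x)\pitchfork e$.

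For the reverse inclusion, take $z\in\check S$ transverse to $e$, so $z\in D(c)$ by the first paragraph and $c(z)=i(e+z)(e-z)^{-1}$ is defined. Using $\bar z=z^{-1}$ and manipulating inside the subalgebra generated by $z$ and $e$, one computes
\[
\overline{c(z)}=-i(e+z^{-1})(e-z^{-1})^{-1}=-i(e+z)(z-e)^{-1}=i(e+z)(e-z)^{-1}=c(z),
\]
so $c(z)\in V$. Since $V\subseteq D(p)$ and $p\circ c=\id$ on $D(c)$, this yields $z=p(c(z))\in p(V)$, which completes the argument. There is no genuine obstacle here; the only point requiring care is the bookkeeping of the Jordan-algebraic identities — invertibility of $x\pm ie$, the behaviour of conjugation, and the commutative–associative manipulations underlying the displayed computation — all of which are routine given the machinery already set up.
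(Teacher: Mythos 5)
Your argument is correct and follows the route the paper intends: the paper derives this corollary directly from Proposition \ref{TransMain} (criterion (i), transversality to $e$ being equivalent to invertibility of $e-z$, i.e.\ to $z\in D(c)$), and your additional verifications — that $x\pm ie$ is invertible for $x\in V$ via the spectral theorem, and that conjugation intertwines with $c$ and $p$ so that $c$ carries $\check S\cap D(c)$ into $V$ and $p$ carries $V$ into $\check S$ — are exactly the routine Cayley-transform bookkeeping the paper leaves implicit. No gaps.
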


\section{A version of Labourie's equivalence theorem}\label{AppLabourie}
We recall that a continuous $\Gamma$-invariant functions on $(S^1)^{4*}$ satisfying the identities
\eqref{a1}-\eqref{a5} above is called a strict cross ratio. It was observed by Labourie in \cite{Lab05} that all such strict cross ratios are essentially equivalent. This notion can be made precise in various ways; we will need the following version:
\begin{theorem}[Labourie]\label{PropLab}
  Let $b_1$ and $b_2$ be strict cross ratios. Then there exist $C,D>0$ such that
  \[D^{-1}|\log b_1|-1\leq |\log b_2|\leq C|\log b_1|+C\]
\end{theorem}
Since this formulation is slighlty different from the one provided in \cite{Lab05}, we include a complete proof. All the essential ideas are taken from \cite{Lab05}.\\

Let $b: (S^1)^{4*} \to \R$ be any strict cross ratio. We will ocassionally use the following two cocycle identities:
\begin{eqnarray}
  \label{p1} b(x,y,x,t)&=&b(x,y,z,t)b(z,y,x,t)=1\\ %wegen (4) und (3)
  \label{p2} \log b(x,y,z,t)&=&-\log b(z,y,x,t)
\end{eqnarray}
The former is an immediate consequence of \eqref{a4} and
\eqref{a3} and the latter follows by applying the logarithm. We
will usually consider $x,y,z$ fixed and study 
\[g(t):=b(x,y,z,t)\] as
a function of $t$. Let us assume that $(x,y,z)$ is positively
oriented. We then divide the circle into three open disjoint
intervals $I_1=(x,y)$, $I_2=(y,z)$ and $I_3=(z,x)$ so that
\[S^1 = \{x\} \cup I_1 \cup \{y\} \cup I_2 \cup\{z\} \cup I_3.\]
The function $g$ is then defined on $S^1 \setminus \{z\}$.  By Axiom \eqref{a5}, $x$ is the only zero of $g$. Since $g(y)=1$ is positive, $g$ is positive on $I_u:=I_1\cup \{y\} \cup I_2$. Let $a,b\in I_u$ such that $g(a)=g(b)$. Then we have:
\[
 1=g(a)g(b)^{-1}=b(x,y,z,a)b(x,y,z,b)^{-1}=b(-1,a,1,b),
\]
but by Axiom \eqref{a4} this can only be the case if $a=b$. Hence $g|_{I_u}$ is injective. Furthermore, by Axiom \eqref{a5} and \eqref{p1} we have
\[
 \underset{t \in I_u}{\lim_{t\rightarrow z}} g(t)=\underset{t \in I_u}{\lim_{t\rightarrow z}}b(z,y,x,t)^{-1}= +\infty.
\]
Since $g(x) = 0$ the intermediate value theorem implies that $g|_{I_u}$ is surjective and therefore defines a homeomorphism between $I_u$ and $(0,\infty)$. Now consider the case
$t\in I_3$. We claim that $g(t)$ is
negative on $I_3$. For this we first observe that as above
\[\underset{t \in I_u}{\lim_{t\rightarrow z}} g(t) \in \{\pm \infty\}.\]
Again, since $g(x) = 0$ the intermediate value theorem implies that $g$ maps $I_3$ homeomorphically to either $(0, \infty)$ or $(-\infty,0)$. However, the former would imply the
existence of $t_0 \in I_3$ with $g(t_0) = 1$, which contradicts Axiom \eqref{a4}. Hence
\[\underset{t \in I_u}{\lim_{t\rightarrow z}} g(t) = -\infty\]
and $g$ maps $I_3$ homeomorphically to $(-\infty,0)$. We have proved:
\begin{proposition}\label{StrictCrHomeo}
If $(x,y,z)$ is a positively oriented triple on $S^1$ and $b: (S^1)^{4*} \to \R$ is a strict cross ratio, then
 \[g: S^1 \setminus\{z\} \to \R, \quad t\mapsto b(x,y,z,t)\]
is a homeomorphism with $g(x) = 0$.
\end{proposition}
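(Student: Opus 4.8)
The plan is to study $g(t):=b(x,y,z,t)$ directly, using only the cocycle axioms \eqref{a1}--\eqref{a5} and the derived identities \eqref{p1}--\eqref{p2}, and to show that $g$ is a strictly monotone continuous bijection from $S^1\setminus\{z\}$ (an open arc, hence homeomorphic to $\R$) onto $\R$; such a map is automatically a homeomorphism. First I would fix the three open arcs $I_1=(x,y)$, $I_2=(y,z)$, $I_3=(z,x)$ cut out by the positively oriented triple, record that $g$ is continuous on $S^1\setminus\{z\}$ with $g(x)=0$ its unique zero by \eqref{a5} and $g(y)=1$ by \eqref{a4}, and deduce that $g>0$ on the arc $I_u:=I_1\cup\{y\}\cup I_2$ (no zero there and $g(y)>0$), while $g$ has constant sign on $I_3$ (no zero at all there).

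Next I would establish injectivity on each of $I_u$ and $I_3$: if $g(a)=g(b)$ then \eqref{a2} gives $b(x,b,z,a)=g(a)g(b)^{-1}=1$, and since $x\neq z$ Axiom \eqref{a4} forces $a=b$. For the behaviour of $g$ near the puncture $z$, the inversion identity \eqref{p2} rewrites $g(t)=b(z,y,x,t)^{-1}$, and $b(z,y,x,t)\to0$ as $t\to z$ by \eqref{a5} (here one uses $x\neq y$ to keep $b(z,y,x,t)$ nonzero away from $z$), so $g(t)\to\pm\infty$; on $I_u$, where $g>0$, the limit is necessarily $+\infty$.

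Finally I would apply the intermediate value theorem twice. On $\{x\}\cup I_u$ the map $g$ is continuous, injective, equal to $0$ at $x$ and tends to $+\infty$ at $z$, hence is a homeomorphism onto $[0,\infty)$. On $I_3$ it is continuous, injective, tends to $0$ at $x$ and to $\pm\infty$ at $z$, hence is a homeomorphism of $I_3$ onto $(0,\infty)$ or onto $(-\infty,0)$; the first alternative is impossible, since it would produce $t_0\in I_3$ with $g(t_0)=1$, contradicting \eqref{a4} (neither $x=z$ nor $y=t_0$ holds, as $y\notin I_3$). Hence $g|_{I_3}$ maps onto $(-\infty,0)$, and glueing the two pieces along $x$ exhibits $g$ as a strictly monotone continuous bijection $S^1\setminus\{z\}\to\R$, in particular a homeomorphism.

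Two points deserve care. The first is the very meaning of $g(x)=0$: since $b$ is a priori defined for $t\neq x$, one must justify that $g$ extends continuously across $x$ by $0$, which is what the degenerate case of \eqref{a5} records. The second, and the step I expect to be the real crux, is ruling out the wrong sign of $g$ on $I_3$ --- this is the one place where \eqref{a4} is needed in an essential way rather than merely positivity and continuity, and it is what forces $g$ to be onto all of $\R$ rather than just onto $[0,\infty)$.
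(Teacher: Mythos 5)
Your proof is correct and follows essentially the same route as the paper's: the same decomposition of $S^1\setminus\{z\}$ into $I_u=(x,y)\cup\{y\}\cup(y,z)$ and $I_3=(z,x)$, injectivity via Axioms \eqref{a2} and \eqref{a4}, the blow-up at $z$ via the inversion identity \eqref{p1} together with \eqref{a5}, and the exclusion of the value $1$ on $I_3$ to force $g(I_3)=(-\infty,0)$. Your two flagged points (the continuous extension by $0$ at $t=x$, and explicit injectivity on $I_3$ before invoking the intermediate value theorem) are handled slightly more carefully than in the paper, but the argument is the same.
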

Notice that as a homeomorphism $S^1 \setminus \{z\} \to \R$ the
function $g$ is automatically monotonous. In the sequel we denote
by
\[(S^1)^{3+} := \{(x,y,z) \in (S^1)^{(3)}\,|\, (x,y,z) \text{ positively ordered}\}\]
the set of positively ordered triples.

\begin{corollary}\label{FlowOfACr}
  For every strict cross ratio there exists a $\Gamma$-equivariant continuous map (with the trivial action on $\R$)
  \[
    \psi:\R\times (S^1)^{3+}\rightarrow S^1
  \]
  such that $\log b(x,y,z,\psi_s(x,y,z))=s$. This function satisfies
  \begin{eqnarray}\label{CompatibleFlow}
   \psi_{s+t}(x,y,z) = \psi_t(x,\psi_s(x,y,z), z).
  \end{eqnarray}
\end{corollary}
\begin{proof}
  Put $\psi_s(x,y,z):=(\log g)^{-1}(s)$, where $g$ is as in the last proposition. Then $\log b(x,y,z,\psi_s(x,y,z))=s$ holds by definition. Moreover, abbreviating $a := \psi_s(x,y,z)$ and $b := \psi_t(x,a,z)$ we get
  \begin{eqnarray*}
  \log g(b) = \log b(x,y,z,b) &=& \log b(x,y,z,a) + \log b(x,a,z,b)\\
  &=&  \log b(x,y,z,\psi_s(x,y,z)) +  \log b(x,a,z,\psi_t(x,a,z))\\
  &=& s +t,
  \end{eqnarray*}
  whence $b = (\log g)^{-1}(s+t) = \psi_{s+t}(x,y,z)$ as claimed.
\end{proof}
Now we can deduce the theorem:
\begin{proof}[Proof of Theorem \ref{PropLab}] Let $\psi^1_s$ and $\psi^2_s$ be maps associated to $b_1$ and $b_2$ by means
of Corollary \ref{FlowOfACr}. Define a function $T:(S^1)^{3+} \rightarrow \R$ by
\[
 T(x,y,z):= \log b_1(x,y,z,\psi^2_1(x,y,z))
\]
    This map is positive and continuous. Since $ \psi^2_s$ is $\Gamma$-equivariant,
    T is $\Gamma$-invariant. Furthermore it satisfies 
    \begin{eqnarray}\label{FlowIndStart} \psi_1^2(x,y,z)=\psi^1_{T(x,y,z)}(x,y,z). \end{eqnarray}
    Since $(S^1)^{3+}/\Gamma$ is compact, $|T|$ has a global maximum $A$. Now consider the function $f: \R \to \R$ (depending on $x,y,z$) given by
\[
  f(s) := \log b_1(x,y,z, \psi^2_{s}(x,y,z))
\]    
For $n\in \Z$ we have 
\begin{align*}
  |f(n)|=&\left| \log b_1(x,y,z,\psi_n^2(x,y,z))\right|\\
  =& \left|\sum_{i=0}^{n-1}\log b_1(x,\psi_i(x,y,z),z,\psi_{i+1}(x,y,z))\right|\\
  =&\left|\sum_{i=0}^{n-1}\log b_1(x,\psi_i(x,y,z),z,\psi_1(x,\psi_i(x,y,z),z))\right|\\
  \leq &A\cdot |n|
\end{align*}
Because of monotonicity of $f$ we get for $0\leq s\in [n,n+1)$:
\[
  f(s)\leq f(n+1)\leq An+A\leq As+A
\]
and for $0\geq s\in [n,n+1)$:
\[
  |f(s)|\leq |f(n)|\leq A|n|\leq A(|s|+1)=A|s|+A. 
\]
We can summarize these inequalities to $|f(s)|\leq A\cdot |s|+A$ and we get
\begin{eqnarray*} 
 |\log b_1(x,y,z,t)|
      &=&|\log b_1(x,y,z, \psi^2_{s}(x,y,z))| = |f(s)|\\
     &\leq&A \cdot |s|+A\\
     &=& A \cdot |\log b_2(x,y,z,t)|+A.
\end{eqnarray*}    
This proves the upper bound for $(x,y,z)\in (S^1)^{3+}$. If
$(x,y,z)$ is negatively oriented, then $(z,y,x)$ is in
$(S^1)^{3+}$. The upper bound for this case follows from the fact
that:
\[
 |\log b(x,y,z,t)|=|\log b(z,y,x,t)|.
\]
The lower bound is obtained by reversing the roles of $b_1$ and $b_2$.
\end{proof}
\begin{remark}
  Note that the compactness of $\Sigma$ is crucial for the proof of Theorem \ref{PropLab}.
\end{remark}
\section{Uniqueness of limit curves}\label{AppLimitCurve}
The purpose of this appendix is to provide a detailed proof of Proposition \ref{LCUnique}, which claims that the continuous monotone limit curve associated with a maximal representation is unique. Throughout this appendix we fix a maximal representation $\varrho: \Gamma \to G$. Our starting point is the following observation:
\begin{lemma}\label{LCintersect}
Let $\varphi_1, \varphi_2: S^1\rightarrow \check S$ be two continuous monotone limit curves for the same maximal representation $\varrho$. If $\varphi_1(S^1) \cap \varphi_2(S^1) \neq \emptyset$, then
$\varphi_1 = \varphi_2$.
\end{lemma}
\begin{proof}  By equivariance of the
$\varphi_j$ the intersection contains a $\Gamma$-orbits, but since the $\Gamma$-action on $S^1$ is minimal this implies that this
preimage is the full circle and thus $\varphi_1(S^1) =
\varphi_2(S^1)$. Every $\gamma \in \Gamma$ has a unique attractive fixed point $\gamma^+$ in $S^1$. By equivariance, this is
mapped under both $\varphi_j$ to the unique attractive fixed point of $\rho(\gamma)$ in $\varphi_1(S^1) =
\varphi_2(S^1)$. We deduce $\varphi_1(\gamma^+) = \varphi_2(\gamma^+)$ for all $\gamma \in \Gamma$ and since $\{\gamma^+\,|\,
\gamma \in \Gamma\}$ is dense in $S^1$ we have $\varphi_1 = \varphi_2$.
\end{proof}
It thus remains to show that any two continuous monotone limit curves intersect. As pointed out to us by Olivier Guichard, this fact can be derived from a general contraction property of Anosov representations. To formulate the contraction property, let $\gamma \in \Gamma-\{\id\}$ and denote by $\gamma^-$ the unique repellent and by $\gamma^+$ the unique attractive fixed point of $\gamma$ in $S^1$. Then we have:
\begin{lemma}\label{contraction}
  Let $\gamma\in \Gamma-\{\id\}$ and $\gamma^+\in S^1$ its attractive fixed point. Then for any limit curve $\varphi$ the sequence $\varrho(\gamma)^n$ contracts an open and dense set $U = U(\phi, \gamma)$ of the Shilov boundary to $\varphi(\gamma^+)$.
\end{lemma}
Let us first ensure that this indeed yields the desired conclusion:
\begin{proof}[Proof of Proposition \ref{LCUnique}]
Assume $\phi_1$ and $\phi_2$ are two limit curves for the maximal representation $\varrho$ and let $x \in U = U(\phi_1, \gamma) \cap U(\phi_2, \gamma)$, which is non-empty by the lemma. Then $\varrho(\gamma)^n x$ converges to both $\varphi_1(\gamma^+)$ and $\varphi_2(\gamma^+)$, whence $\varphi_1(\gamma^+) = \varphi_2(\gamma^+)$. This shows that the two limit curves intersect, whence coincide by Lemma \ref{LCintersect}. 
\end{proof}
It thus remains to deduce Lemma \ref{contraction} from the Anosov property of $\varrho$. 
Throughout our discussion we fix a maximal representation $\rho$, a continuous limit curve $\phi$ and an element $\gamma \in \Gamma \setminus\{e\}$. We denote by $\gamma^+$ and $\gamma^-$ its unique attractive respectively repellent fixed point in $S^1$.  We also assume that the bounded symmetric domain $\mathcal D$ associated with $G$ has been realized as $\mathcal D = \mathcal D_V$ for some formally real Jordan algebra $V$. Moreover, we will assume $\varphi(\gamma^{\pm})  =\pm e_V$, so that 
\begin{eqnarray}\label{GammaLevi} \varrho(\gamma) \in Q_+\cap Q_-,
\end{eqnarray}
where $Q_{\pm} = Q_{\pm, V}$ is our standard pair of Shilov parabolics. Since the situation of Lemma \ref{contraction} is conjugation-invariant, this is not a restriction, and it will simplify our notation. We abbreviate by $M := T^1\Sigma$ the unit tangent bundle of $\Sigma$ and by $\overline{M} := T^1{\widetilde \Sigma}$ the unit tangent bundle of its universal covering. Then $M = \Gamma\backslash\overline{M}$ and we denote by $p: \overline{M} \to M$ the canonical projection, which is induced by the natural $\Gamma$-action on $\overline{M}$. Note that the geodesic flows $\bar\varphi_t$ and $\varphi_t$ on $\overline{M}$ respectively $M$ are related by the formula
\[\varphi_t(\Gamma x) = \Gamma\bar\varphi_t(x)\quad (x \in \overline{M}).\]
We recall our notation $\check S^{(2)}$ for the space of transverse pairs in the Shilov boundary; we identify $\check S^{(2)}$ with the $G$-orbit of $(eQ_-, eQ_+)$ in $G/Q_- \times G/Q_+$. We now define a $\check S^{(2)}$-bundle $E_{\rho} \to M$ by
\[
  E_{\rho} := \Gamma\backslash (\overline{M} \times \check S^{(2)})\to M,
\]
where the action on the first factor is by covering transformations, while the action of the second factor is induced by $\rho$. Since
\[
\overline{E_{\rho}} := p^*E_{\rho} = \overline{M} \times \check S^{(2)}.
\]
is trivial, the bundle $E_{\rho}$ is flat. The flow $\bar \phi_t$ extends to a flow $\overline{\hat \varphi_t}$ on $\overline{E_{\rho}}$ by
\[
  \overline{\hat \varphi_t}(v,s):=(\bar\varphi_t(v),s).
\]
This flow descends to a flow $\hat \varphi_t$ on $E_{\rho}$, which lifts the geodesic flow $\varphi_t$. Now the product structure of $\overline{E_{\rho}}$ induces a splitting
\[T\overline{E_{\rho}} \cong  T{\overline M} \oplus T{\check S^{(2)}} \cong T{\overline M} \oplus (T\check S \oplus T\check S)|_{\check S^{(2)}}.\]
To distinguish the second and the third summand in the last decomposition we denote them by $\bar E_\varrho^+$ and $\bar E_\varrho^-$ respectively. By definition the fiber of $\bar E_\varrho^\pm$ over $(v, s^+, s^-)$ is $T_{s^{\pm}}\check S$ and both bundles are invariant under $\overline{\hat \varphi_t}$. This implies that the bundles
\[
E^\pm_\varrho:=\Gamma\backslash \bar E^\pm_\varrho,
\]
are invariant under the flow $\hat \varphi_t$. We use the notation $p_{\varrho}^{\pm}: E_{\varrho}^{\pm} \to E_{\varrho}$ for the projections. Now we bring into play our limit curve $\varphi$. Here we use the fact that the space $\overline{M}$ may be parametrized by positive triples in $S^1$ in such a way that $(v_-, v_0, v_+) \in (S^1)^3$ parametrizes the projection of $v_0$ onto the geodesic $v_-v_+$. 
\begin{lemma}\label{LimitCurveSection} If $\phi$ is a limit curve, then the function 
 \[\bar{\sigma}_\phi: \overline{M} \to \overline{M} \times \check S^{(2)}, \quad v = (v_-, v_0, v_+) \mapsto (v,(\phi(v_-),\phi(v_+)))\]
 is $\bar\varphi_t$-invariant and $\Gamma$-equivariant. It descends to a continuous section $\sigma_\phi: M \to E_\rho$ of the bundle $E_{\rho}$, which is $\varphi_t$-invariant. \end{lemma}
\begin{proof} The function is well-defined by monotonicity of $\phi$ and clearly a section of $\overline{E_\rho}$. It is $\Gamma$-equivariant, since $\phi$ is  $\rho$-equivariant, and flow-invariant, since $\bar{\sigma}_\phi$ does not depend on $v_0$ in the first coordinate.
\end{proof}
The sections $\bar \sigma_\phi$ and $\sigma_\phi$ allow us to define bundles $\bar \sigma_\phi^*\overline{E^\pm}_\varrho$ and $\sigma_\phi^*E^\pm_\varrho$ over $\overline{M}$ and $M$ respectively. These bundles are related by the formula
\[
 \Gamma \backslash(\bar \sigma_\varphi^*\bar E^\pm_\varrho)=\sigma_\varphi^*E^\pm_\varrho.
\]
From the explicit description
\[
\sigma_\varphi^*E^\pm_\varrho = \{(m,e) \in M \times E^\pm_\varrho\,|\, \sigma_{\varphi}(m) = p_\rho^{\pm}(e) \}
\]
we see that the bundles $\sigma_\varphi^*E^\pm$ are invariant under the flow $\psi_t := \phi_t \times \hat \phi_t$ on $M \times E^\pm_\varrho$. We denote by $\bar \psi_t$ the corresponding flow on $\bar \sigma_\varphi^*\bar E^\pm_\varrho$. These flows lift the geodesic flows $\bar \phi_t$ and $\phi_t$. We introduce the notations 
\[\pi_+: \sigma_\varphi^*E_\rho^+ \to M, \quad \pi_-:\sigma_\varphi^*E_\rho^- \to M\]
for the canonical projections. Now the main technical result of \cite{LimitCurves} reads as follows:
\begin{lemma}[Burger-Iozzi-Wienhard]\label{BIWAnosov}
The section $\sigma_\phi: M \to E_\rho$ is an Anosov section, i.e. for any continuous family of norms $(\|\cdot\|_m)_{m \in M}$ on $\sigma_\phi^* E_\rho^{\pm}$ there exist constants $A,a > 0$ such that for every $m \in M$, $v^{\pm} \in (\sigma_\phi^* E_\rho^{\pm})_m$ and $t > 0$,
\begin{eqnarray*}
\|\psi_{\pm t}(v^\pm)\|_{\pi_{\pm}(\psi_{\pm t}(v^\pm))} \leq A \exp(-at) \|v^{\pm}\|_m.
\end{eqnarray*}
\end{lemma}
In order to deduce Lemma \ref{contraction} we need to relate the contraction property of the flow  $\psi_t$ to a similar contraction property of the $\Gamma$-action. Since $\phi(\gamma^+)$ is a fixed point of $\rho(\gamma)$, the element $\rho(\gamma) \in G$ acts on $T_{\phi(\gamma^+)}\check S^{(2)}$. We want to identify this tangent space with a fiber of the bundles appearing in Lemma \ref{BIWAnosov}. For this we first observe that since $\gamma$ is hyperbolic there exists a unit speed geodesic $\omega$ in $\tilde \Sigma$, such that \[
 \gamma\cdot \omega(t)=\omega(t+\tau), \quad(t\in \R),
\]
where $\tau := \tau_\D(\gamma)$ is the translation length of $\gamma$. We then have $\omega(\pm \infty) = \gamma^{\pm}$. Now denote by $\dot \omega(t)\in \overline M$ the derivative of $\omega$. Note that in terms of the geodesic flow $\bar \phi_t$ on $\overline M$ we have
\begin{equation}
 d\gamma\cdot \dot \omega(t)=\dot \omega(t+\tau)=\phi_\tau(\dot\omega(t)).\label{EqActionFlow}
\end{equation}
We see from the definition of $\bar \sigma_\phi$ that the fibers of $\bar \sigma_\varphi^*\bar E^\pm_\varrho$ along $\dot\omega(t)$ are canonically isomorphic with $T_{\varphi(\gamma^+)}\check S$. Given $t \in \R$ we introduce the notation $\iota_t: (\bar \sigma_\varphi^*\bar E^\pm_\varrho)_{\dot\omega(t)} \to T_{\varphi(\gamma^+)}\check S$ for the canonical isomorphism. These isomorphisms intertwine the infinitesimal action of $\Gamma$ on $T_{\varphi(\gamma^+)}\check S$ with the natural action on $\bar \sigma_\varphi^*\bar E^\pm_\varrho$, i.e.
\begin{eqnarray}\label{FiberIsos}
d\rho(\gamma).\iota_t(x) = \iota_{t + \tau}(\rho(\gamma).x) \quad (x \in (\bar \sigma_\varphi^*\bar E^\pm_\varrho)_{\dot\omega(t)}, t \in \R).
\end{eqnarray}
Now we choose a continuous family of norms continuous family of norms $(\|\cdot\|_m)_{m \in M}$ on $\sigma_\phi^* E_\rho^{\pm}$. We lift these norms to  the bundles $p^*E_\varrho^\pm$ over $\overline M$ by putting $\|\cdot \|_{\bar m}:=\|\cdot \|_{p(\bar m)}$, where $p:\overline M\rightarrow M$ is the canonical projection. 
\begin{lemma}\label{LContrTangent}
For every $v \in T_{\varphi(\gamma^+)}\check S$ we have
\[\lim_{n \to \infty} (d\varrho(\gamma))^nv = 0.\]
\end{lemma}
\begin{proof} We use the abbreviation $\|\cdot\|_t := \|\cdot\|_{\dot\omega(t)}$ for $t \in \R$ and define a norm on $T_{\varphi(\gamma^+)}\check S$ by 
\[\|v\| := \|\iota_0^{-1}(v)\|_0.\]
We note that the isomorphism 
\[\iota_0^{-1}\iota_\tau: (\bar \sigma_\varphi^*\bar E^\pm_\varrho)_{\dot\omega(\tau)}  \to (\bar \sigma_\varphi^*\bar E^\pm_\varrho)_{\dot\omega(0)} \]
is induced by the action of $\gamma$, hence isometric. Now let $v \in T_{\varphi(\gamma^+)}\check S$ and $x = \iota_0^{-1}(v)$. According to Lemma \ref{BIWAnosov} we thus find positive constants $a, A$ such that
\begin{eqnarray*}
\|d\rho(\gamma).v\| &=&  \|d\rho(\gamma).\iota_0(x))\| = \|\iota_\tau(\rho(\gamma).x))\| =  \|\iota_0^{-1}\iota_\tau(\rho(\gamma).x))\|_0 \\
&=& \|\rho(\gamma).x\|_\tau = \|\bar \psi_\tau(x)\|_{\tau}=\|\psi_\tau(\Gamma x)\|_{p(\dot\omega(\tau))}  \\ &\leq& A\exp(-a\tau)\|\Gamma x\|_{p(\dot\omega(0))} 
= A\exp(-a\tau)\|x\|_{0}  \\ &=&   A\exp(-a\tau)\|v\|. 
\end{eqnarray*}
Replacing $\gamma$ by $\gamma^n$ we obtain $\|d\rho(\gamma)^n.v\| \leq A\exp(-an\tau)\|v\|$ (since the constants $A,a$ are universal). This shows that $d\rho(\gamma)$ contracts $T_{\varphi(\gamma^+)}\check S$.
\end{proof}
Now we can finally deduce:
\begin{proof}[Proof of Lemma \ref{contraction}] 
We denote by $N_{\pm}$ the unipotent radicals of $Q_{\pm}$ and by $L(Q_+)$ the common Levi factor of $Q_{\pm}$. Then $N_-L(Q_+)N_+$ is open and dense in $G$, and $N_-$ is abelian (Corollary \ref{LeviRep}), hence the exponential function $\L n_- \to N_-$ is onto. In particular, the map $\iota: \L n_- \to G/Q_+$ sending $X$ to $\exp(X)Q_+$ is a homeomorphism onto an dense open subset $U:= U(\phi, \gamma)$ of $\check S$ (namely the set of all points transverse to $-e_V$). On the infinitesimal level we get an identification $\iota_*: \L n_- \to T_{e_V}\check S$. In the description of $ T_{e_V}\check S$ by equivalence classes of smooth curves it is explicitly given by
\[\iota_*(X)= [\exp(tX).e_V].\]
We recall our assumption that $e_V = \phi(\gamma^+)$ is a fixed point of $\rho(\gamma)$. In particular, $\rho(\gamma)$ acts on $T_{e_V}\check S$ and by Lemma \ref{LContrTangent} this action is contracting. Now for every $X \in \L n_-$,
\begin{eqnarray*}
\rho(\gamma)^n.\iota_*(X) &=& [\rho(\gamma)^n.\exp(tX).e_V] = [\rho(\gamma)^n.\exp(tX).\rho(\gamma)^{-n}.e_V]\\ &=& [\exp(t{\rm Ad}(\rho(\gamma))^n(X)).e_V]= \iota_*({\rm Ad}(\rho(\gamma))^n(X)),
\end{eqnarray*}
showing that $\iota_*$ intertwines the action of $\rho(\gamma)^n$ on $T_{e_V}\check S$ with the adjoint action on $\L n_-$. In particular, $\L n_-$ is invariant under ${\rm Ad}(\rho(\gamma))$ and contracted by ${\rm Ad}(\rho(\gamma))^n$. Now every $x \in U$ can be written as $x = \exp(X).e_V$ for some $X \in \L n_-$ and we have
\[\rho(\gamma)^n.x = \exp({\rm Ad}{(\rho(\gamma))^n}(X)).e_V \to \exp(0).e_V = \phi(\gamma^+).\]
Thus $\rho(\gamma)$ contracts $U$ to $\phi(\gamma^+)$ as claimed.
\end{proof}
\section{List of notations}

\bibliography{Bibliography}

\end{document}